\newtheorem{theorem}{Theorem}
\newtheorem{problem}{Problem}
\newtheorem{claim}{}[theorem]
\newtheorem{claim1}{}[problem]
\newtheorem{cor}{Corollary}
\newtheorem{rlt}{Theorem}
\newlist{lemmalist}{enumerate}{1}
\setlist[lemmalist]{label=(\roman{lemmalisti}),
	ref=\thelemma:$(\roman{lemmalisti})$,	noitemsep}
\declaretheorem[name=Lemma]{lemma}
\Crefname{lemmalist}{Lemma}{Lemmas}
\def \no {\noindent}
 \def \sm {\setminus}
 \def \es {\emptyset}
\newenvironment{proof}[1][]%
{\noindent {\setcounter{equation}{0}\it Proof.
}{#1}{}}{\hfill$\Box$\vspace{2ex}}
\def\longbox#1{\parbox{0.85\textwidth}{#1}}
\begin{document}
\title{On  graphs with no induced $P_5$ or $K_5-e$}

\author{ Arnab Char\thanks{Computer Science Unit, Indian Statistical
Institute, Chennai Centre, Chennai 600029, India. } \and T.~Karthick\thanks{Corresponding author, Computer Science Unit, Indian Statistical
Institute, Chennai Centre, Chennai 600029, India. Email: karthick@isichennai.res.in. ORCID: 0000-0002-5950-9093. }}

\date{\today}

\maketitle

\begin{abstract}

In this paper\footnote{This paper is dedicated to the memory of Professor Frederic Maffray on his death anniversary.}, we are interested in some problems related to  chromatic number and  clique number  for the class of $(P_5,K_5-e)$-free graphs, and   prove the following.

\begin{enumerate}[label = {($\alph*$)}]
\item If $G$ is a connected ($P_5,K_5-e$)-free graph with $\omega(G)\geq 7$, then either $G$ is  the complement of a bipartite graph or $G$ has a clique cut-set. Moreover, there is a connected  ($P_5,K_5-e$)-free   imperfect graph $H$ with $\omega(H)=6$ and has no clique cut-set. This strengthens a result of Malyshev and Lobanova [Disc. Appl. Math. 219 (2017) 158--166].
     \item If $G$ is a  ($P_5,K_5-e$)-free graph with $\omega(G)\geq 4$, then $\chi(G)\leq \max\{7, \omega(G)\}$. Moreover, the bound is tight when $\omega(G)\notin \{4,5,6\}$. This result together with  known results partially answers a question of Ju and Huang [arXiv:2303.18003 [math.CO] 2023], and also improves a result of Xu [Manuscript  2022].

    \end{enumerate}
While  the \textsc{Chromatic Number Problem} is known to be $NP$-hard for the class of $P_5$-free graphs, our results together with some known results imply that the \textsc{Chromatic Number Problem} can be solved in polynomial time for the class of ($P_5,K_5-e$)-free graphs which may be independent interest.
\end{abstract}

\no{\bf Keywords}: Vertex coloring; Chromatic number; Clique number; $P_5$-free graphs.

\section{Introduction}

All graphs in this paper are finite, simple and undirected.
 For a positive integer $t$, let $K_t$ and $P_t$ respectively denote
the complete graph and the  chordless  path on $t$ vertices. For an integer
$t \geq 3$, $C_t$  denotes the chordless cycle on $t$ vertices. A $K_t-e$ is the graph obtained from $K_t$ by removing an edge. For
two vertex disjoint graphs $G_1$ and $G_2$, $G_1 + G_2$ is the disjoint union of $G_1$ and $G_2$. Given a graph $G$, let $tG$ denote the (disjoint) union of $t$ copies of $G$; for instance, $3K_1$ denotes the graph that consists of three disjoint copies of $K_1$.   We say that a graph $G$ {\it contains} a graph $H$ if $G$ has an induced subgraph which is isomorphic to $H$. Given a graph $H$, a graph is {\it $H$-free} if it does not contain $H$. A graph is
{\it $(H_1, H_2, \ldots, H_{\ell})$-free} if it does not contain  $H_i$ for each $i$.  A {\it clique} in a graph $G$  is a set of mutually adjacent vertices in $G$. A \emph{clique cut-set} in a graph $G$  is
a clique   in $G$ whose removal increases the number of components
of $G$. For any two disjoint sets of vertices $X$ and $Y$ of a graph $G$, we say that {\it $X$ is complete to $Y$} if every vertex in $X$
is adjacent to every vertex in $Y$; and we say that {\it $X$ is anticomplete to $Y$} if every vertex in $X$ is
nonadjacent to every vertex in $Y.$

We say that a graph $G$ admits a {\it $k$-coloring} if there is a function $f:V(G)\rightarrow \{1,2,\ldots, k\}$ such that for any  $uv \in E(G)$, we have $f(u)\neq f(v)$. The smallest $k$ such that $G$ admits a $k$-coloring is called the \emph{chromatic number} of $G$.
Given a graph $G$,  $\chi(G)$ denotes the chromatic number of $G$, and $\omega(G)$ denotes the clique
number of $G$ (which is the size of a largest clique in $G$). A graph $G$ is {\it $k$-colorable} if $\chi(G)\leq k$.
Clearly for any induced subgraph $H$ of $G$, we have $\chi(H)\geq \omega(H)$. A graph $G$ is {\it perfect} if   every induced subgraph $H$ of $G$ satisfies  $\chi(H) = \omega(H)$; otherwise $G$ is called an \emph{imperfect} graph.   The class of perfect graphs is well studied and received a wide attention for the past six decades. Examples of perfect graphs include bipartite graphs, complements of bipartite graphs, chordal  graphs, split graphs, $P_3$-free graphs etc., A celebrated  result of Chudnovsky et al.~\cite{SPGT} gives a characterization for the class of perfect graphs, and is now known as the Strong Perfect Graph Theorem.
  Gy\'arf\'as \cite{Gy87} extended the study of perfect graphs, and  introduced the class of $\chi$-bounded graphs.   An induced hereditary class  of graphs $\cal{G}$ is \emph{$\chi$-bounded} \cite{Gy87} if there is a function $f:\mathbb{N} \rightarrow \mathbb{N}$ with $f(1)=1$  and $f(x)\geq x$, for all $x\in \mathbb{N}$ such that every $G\in \cal{G}$ satisfies $\chi(G) \leq f(\omega(G))$; and if such a function $f$ exists, then $f$ is called a \emph{$\chi$-binding function} for $\cal{G}$. A well-known $\chi$-bounded class of graphs is the class of perfect graphs with $f(x)=x$ as  $\chi$-binding function.  In general, $\chi$-binding functions do not exist for an arbitrary class of graphs; see \cite{SS-Survey}.

  Gy\'arf\'as \cite{Gy87} conjectured that, for any tree $T$,  the class of $T$-free graphs is $\chi$-bounded, and showed that the conjecture holds for the class of $P_t$-free graphs, for any $t$. Esperet et al.  \cite{ELMM}  showed that every $P_5$-free graph $G$ with $\omega(G) \geq 3$ satisfies $\chi(G) \leq 5 \cdot 3^{\omega(G)-3}$, and the
bound is tight when $\omega(G) = 3$. While the class of $P_4$-free graphs is long been known to be perfect, it is unknown   whether the class of $P_5$-free graphs admits a polynomial $\chi$-binding function or not \cite{TP-P5}.    Existence of a polynomial $\chi$-binding function for the class of $P_5$-free graphs implies that the Erd\"os-Hajnal conjecture is true for the class of $P_5$-free graphs; see \cite{SS-Survey}.   The second author with Choudum and Shalu \cite{CKS} conjectured that there is a constant $c>0$ such that every $P_5$-free graph $G$ satisfies $\chi(G) \leq c~ \omega(G)^2$.  A recent result of Scott et al.~\cite{SS-P5}  gives the best  known bound for the class of $P_5$-free graphs which states  that such class of graphs admits a quasi-polynomial $\chi$-binding function. Indeed, they showed that every $P_5$-free graph $G$ with $\omega(G) \geq 3$ satisfies $\chi(G) \leq \omega(G)^{\log_2\omega(G)}$.  On the other hand, since the class of ($3K_1, 2K_2$)-free graphs does not admit a linear $\chi$-binding function \cite{BRSV}, and since the class of $3K_1$-free graphs  is a subclass of the class of $P_5$-free graphs, we conclude that the class of $P_5$-free graphs too does not admit a linear $\chi$-binding function.
Moreover, if $H$ is any  graph with independence number $\alpha(H) \geq 3$ or if $H$ contains an induced $2K_2$,  then the   class of ($P_5, H$)-free graphs  does not  admit a linear $\chi$-binding function.

Here  we  are interested in  $\chi$-bounded   ($P_5, H$)-free graphs,  where $H$ is any $2K_2$-free graph on five vertices with independence number $\alpha(H) \leq 2$. Clearly $H\in \{$$K_5$, $C_5$, $\overline{P_5}$/house,  $K_5-e$,  $K_4+K_1$, $4$-wheel, gem, paraglider,   HVN, flag, kite/co-chair$\}$; see Figure~\ref{fig}.  Below we give a short survey on the   best known chromatic bound for these  classes  of graphs. (Note that if $H\in \{$$K_5$,  $K_5-e$,  $K_4+K_1$,  HVN, flag$\}$, then the class of  ($P_5, H$)-free graphs generalizes  the class of ($P_5$, $K_4$)-free graphs \cite{ELMM}.)

\begin{figure}[t]
\centering
 \includegraphics{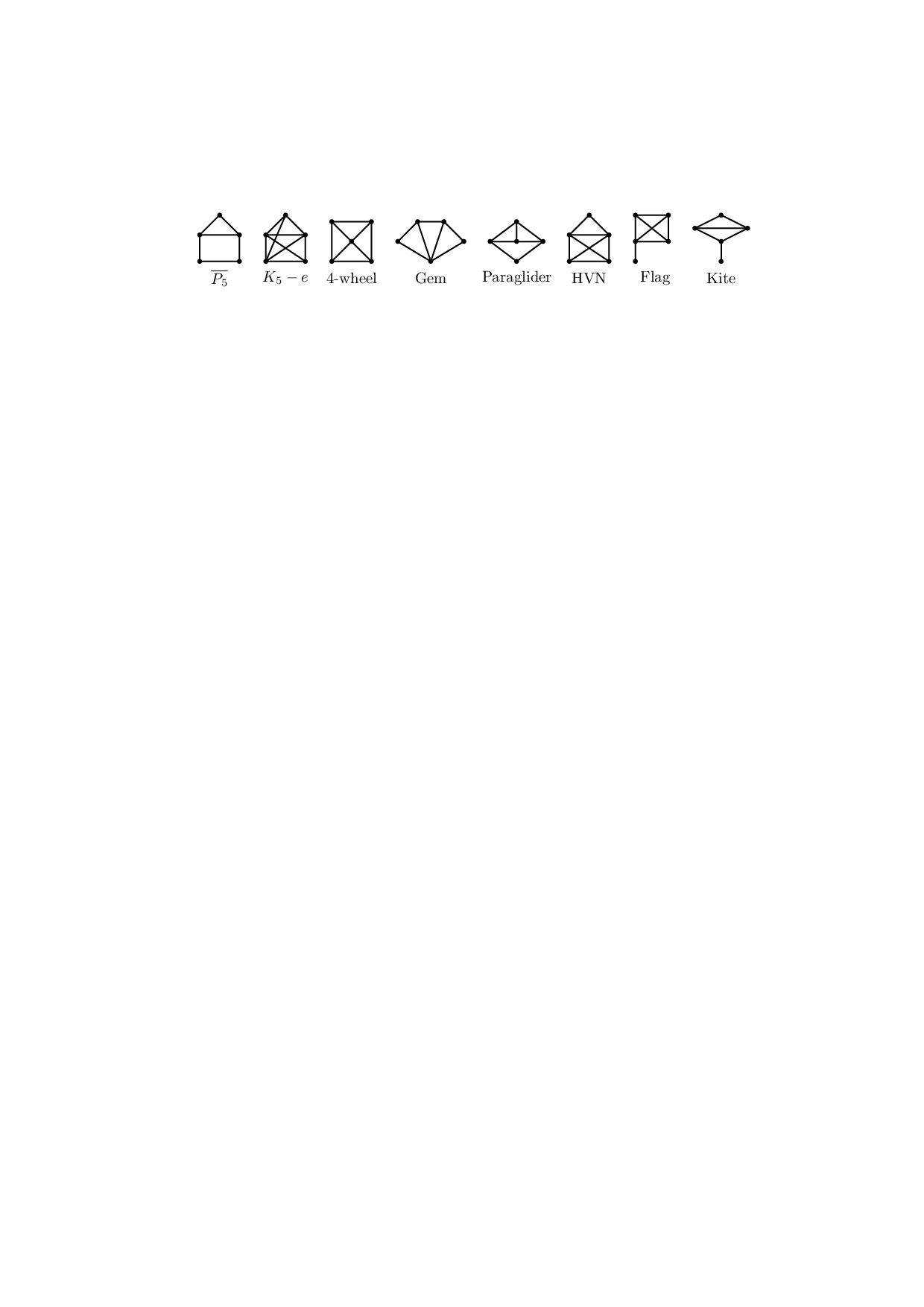}
\caption{Some $5$-vertex graphs with independence number 2.}\label{fig}
\end{figure}

\begin{itemize}[leftmargin=0.4cm]\itemsep=0pt
\item  Esperet et al.~\cite{ELMM} showed that every ($P_5,K_3$)-free graph $G$ satisfies $\chi(G)\leq 3$, and that every ($P_5, K_4$)-free graph $G$ satisfies $\chi(G)\leq 5$. Moreover these bounds are tight.  Further  they showed that every ($P_5, K_5)$-free graph $G$ satisfies $\chi(G) \leq 15$.   The problem of finding a tight $\chi$-bound for the class of ($P_5, K_5$)-free graphs  is open.

\item  Chudnovsky and Sivaraman~\cite{Chud-Siva} showed that every ($P_5, C_5$)-free graph $G$ satisfies $\chi(G)\leq 2^{\omega(G)}$, and the problem of finding a polynomial   $\chi$-binding function   for the class of ($P_5, C_5$)-free graphs  is open.

    \item  Fouquet et al.~\cite{Fouquet} showed that every ($P_5, \overline{P_5}$)-free graph $G$ satisfies $\chi(G)\leq \binom{\omega(G)+1}{2}$. They also showed that there is no linear $\chi$-binding function  for the class of ($P_5, \overline{P_5}$)-free graphs. Indeed, they constructed  an infinite family of ($P_5, \overline{P_5}$)-free graphs $G$ with $\chi(G) \geq \omega(G)^ {\log_25-1}$.

        \item If $G$ is a  $(P_5, K_4+K_1)$-free graph, then since the set of nonneighbors of any vertex in $G$ induces a $(P_5, K_4)$-free graph, an easy induction hypothesis on $\omega(G)$ together with the first item shows  that $\chi(G)\leq 5\omega(G)$. However, the problem of finding a  tight $\chi$-bound   for the class of ($P_5, K_4+K_1$)-free graphs  is open. It is known that \cite{Joos} every $(3K_1, K_4+K_1)$-free graph $G$ satisfies $\chi(G)\leq \frac{7}{4}\omega(G)$.

            \item In \cite{AK1}, the authors of the current paper showed that every ($P_5$, $4$-wheel)-free graph $G$ satisfies $\chi(G)\leq \frac{3}{2}\omega(G)$, and   that there are infinitely many  ($P_5, W_4$)-free graphs $H$ with $\chi(H)\geq \frac{10}{7}\omega(H)$.

    \item Chudnovsky et al.~\cite{CKMM} showed that every ($P_5$, gem)-free graph $G$ satisfies $\chi(G)\leq \lceil\frac{5\omega(G)}{4}\rceil$, and that the bound is tight.

\item The second author with Huang \cite{SK}  showed that every  ($P_5$, paraglider)-free graph $G$ satisfies $\chi(G)\leq \lceil\frac{3\omega(G)}{2}\rceil$, and that the bound is attained by the complement of
the   5-regular Clebsch graph on 16 vertices.  We also gave a complete characterization of a  ($P_5$, paraglider)-free graph $G$  that satisfies $\chi(G)> \frac{3}{2}\omega(G)$, and constructed an infinite family ${\cal L}$ of ($P_5$, paraglider)-free graphs such that every graph $G \in {\cal L}$ satisfies $\chi(G)= \lceil\frac{3\omega(G)}{2}\rceil -1$.

\item If $G$ is a ($P_5$, HVN)-free graph with $\omega(G)\leq 3$, then it follows from the first item that $\chi(G)\leq 5$.   Gei{\ss}er \cite{Geiber} showed that  every ($P_5$, HVN)-free graph $G$ with $\omega(G)\geq 4$ satisfies $\chi(G)\leq \omega(G)+1$. Moreover, these bounds are tight.

\item  If $G$ is a ($P_5$, flag)-free graph with $\omega(G)\leq 3$, then it follows from the first item that $\chi(G)\leq 5$.   In \cite{AK2}, we showed that every  ($P_5$,~flag)-free graph $G$ with $\omega(G)\geq 4$ satisfies $\chi(G)\leq \max\{8, 2\omega(G)-3\}$,  and the bound is tight for $\omega(G)\in \{4,5,6\}$.  We also constructed examples of such graphs $G$ with $\omega(G)=k$ and $\chi(G)= \lfloor \frac{3k}{2}\rfloor$.  Moreover, we conjectured that every ($P_5$, flag)-free satisfies $\chi(G)\leq \max\{8,\lfloor \frac{3\omega(G)}{2}\rfloor\}$, and that the bound is tight.

\item     The second author with Huang and Ju \cite{HJK}  showed that every ($P_5$,~kite)-free graph $G$ with $\omega(G)\leq 6$ satisfies $\chi(G)\leq\lfloor\frac{3\omega(G)}{2}\rfloor$, and that the bound is tight. Further, we showed that every  ($P_5$,~kite)-free graph $G$ with $\omega(G)\geq 6$ satisfies $\chi(G)\leq 2\omega(G)-3$.  We also constructed examples of such graphs $G$ with $\omega(G)=k$ and $\chi(G)= \lfloor \frac{3k}{2}\rfloor$. Moreover, we conjectured that every ($P_5$, kite)-free graph $G$ satisfies $\chi(G)\leq \lfloor \frac{3\omega(G)}{2}\rfloor$, and that the bound is tight.

    \item From results of  Malyshev and Lobanova \cite{ML17}, and Kierstead \cite{Kier84} (see also \cite{KierSchm}), it follows that if $G$ is a connected $(P_5,K_5-e)$-free graph with $\omega(G) > 3 \cdot6^7$ and has no clique
cut-set, then   $\chi(G)\leq \omega(G)+1$. Recently, Xu \cite{Xu} claimed that every ($P_5, K_5-e$)-free graph $G$  satisfies $\chi(G)\leq \max\{13, \omega(G)+1\}$, and the bound is tight when $\omega(G)\geq 12$. However, the proof of the same seems to have some error as it is based on the result which states that  if a graph $G$ is  ($P_5, C_5, K_5-e$)-free and is not a complete graph, then $G$ is 10-colorable (which is obviously not true). For instance, the graph obtained from $K_t$, $t\geq 11$, by attaching a pendent vertex satisfies all   assumptions of the said result, but is not $10$-colorable. Moreover, the tight examples given by Xu \cite{Xu} for $\omega\geq 12$  is clearly not ($K_5-e$)-free.


   \end{itemize}
A class of graphs $\cal{G}$  is said to be {\it near optimal colorable} \cite{Ju-Huang} if there is a constant positive integer $c$ such that
every graph $G\in \cal{G}$  satisfies $\chi(G)\leq \max\{c, \omega(G)\}$. For any two graphs $H_1$ and $H_2$, Ju and Huang \cite{Ju-Huang} gave a characterization for the
near optimal colorability of $(H_1, H_2)$-free graphs with three exceptional cases, one of which is stated below:
\begin{problem} [\cite{Ju-Huang}] \label{prob}
  Decide whether the class of $(F, K_t-e)$-free graphs is near optimal colorable when $F$ is a forest  and $t\geq 4$.
\end{problem}

\cref{prob} seems to be difficult in general even when $F=P_{\ell}$, $\ell \geq 5$.  It is known that every ($P_5, K_4-e$)-graph $G$ satisfies $\chi(G)\leq \max\{3,\omega(G)\}$  \cite{Geiber}, and that every ($P_6, K_4-e$)-graph $G$ satisfies $\chi(G)\leq \max\{6,\omega(G)\}$  \cite{GHJM}.
However, it is unknown that whether the class of ($P_{t}, K_{t}-e$)-free graphs (where $t \geq 5$) is near  optimal colorable or not.

\medskip
 In this paper, we focus on the class of ($P_5,K_5-e$)-free graphs, and we prove the following theorem (and its proof is given in Section~4).

\begin{theorem}\label{main-thm}
	 Let $G$ be a connected ($P_5,K_5-e$)-free graph. Then the following hold:
 \begin{enumerate}\itemsep=0pt
 \item[(a)] If  $\omega(G)\geq 5$, then either $G$ is   the complement of a bipartite graph or $G$ has a clique cut-set or   $\chi(G)\leq 6$. \item[(b)] If $\omega(G)=4$, then either $G$ is   the complement of a bipartite graph or $G$ has a clique cut-set or   $\chi(G)\leq 7$.	\end{enumerate}
	\end{theorem}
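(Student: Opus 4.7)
The plan is to assume that $G$ is neither the complement of a bipartite graph nor has a clique cut-set and, under these two assumptions, to derive the chromatic bound by case analysis on $\omega(G)$. I would first fix a maximum clique $K=\{v_1,\ldots,v_\omega\}$. Since $G$ is $(K_5-e)$-free and $\omega\geq 4$, every $u\in V(G)\setminus K$ has $|N(u)\cap K|\leq 2$: three neighbors of $u$ in $K$ together with any further vertex of $K$ would induce a $K_5-e$. This gives the familiar partition $V(G)\setminus K=B_0\cup\bigcup_i B_i\cup\bigcup_{i<j}B_{ij}$, where the subscript records the neighborhood of a vertex in $K$.

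The next step is a sequence of short forbidden-subgraph arguments that pin down the $B$-sets. A shortest-path argument from any vertex of $B_0$ to $K$, combined with the no-clique-cutset hypothesis to rule out pendant-like configurations, should yield $B_0=\emptyset$. Each $B_{ij}$ and each $B_i$ admits a tight internal description as well: for example, a non-edge inside $B_{ij}$ together with a vertex $v_\ell$, $\ell\notin\{i,j\}$, typically produces an induced $P_5$, and the cross-adjacencies between distinct $B_{ij}$'s, and between the $B_{ij}$'s and the $B_k$'s, are sharply restricted. The no-clique-cutset hypothesis further prevents the natural cliques $\{v_i,v_j\}\cup B_{ij}$ from separating $G$, and the not-complement-of-bipartite hypothesis rules out the trivial split of $V(G)$ as a union of two cliques.

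For $\omega\geq 7$, I would simply invoke part~(a) of the abstract, proved earlier in the paper: it gives the stronger ``complement of a bipartite graph or has a clique cut-set'' dichotomy, so the theorem holds automatically (and the alternative $\chi(G)\leq 6$ cannot even occur, since $\chi(G)\geq\omega\geq 7$). The real content therefore lies in the small-$\omega$ regime. For $\omega\in\{5,6\}$ I expect the constraints above to reduce $G$ to a short explicit list of configurations, each of which can be $6$-colored by hand using the $v_i$'s together with the restricted interaction pattern among the $B_{ij}$'s. For $\omega=4$ the same scheme yields only the weaker bound $\chi(G)\leq 7$, and this is where I anticipate the main obstacle: with only four clique vertices $(K_5-e)$-freeness is the weakest constraint, the $\binom{4}{2}=6$ pair types can simultaneously host nonempty $B_{ij}$'s, and since the target bound $7$ exceeds $\omega$, the coloring cannot be read off from $K$ alone. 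The hard part will be to exhibit a $7$-coloring of the $B_{ij}$'s compatible with a proper coloring of the $v_i$'s, which I anticipate to require a careful greedy or palette-exchange argument on the index hypergraph $\{\{i,j\}:B_{ij}\neq\emptyset\}$.
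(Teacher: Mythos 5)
Your opening observation is correct and useful: since $K$ is a maximum clique with $|K|=\omega\geq 4$, any $u\notin K$ with three neighbours in $K$ yields an induced $K_5-e$ (or extends $K$), so $|N(u)\cap K|\leq 2$ and the partition into $B_0$, $B_i$, $B_{ij}$ is legitimate. But beyond that line, what you have written is a plan rather than a proof, and the plan has a genuine circularity. For $\omega\geq 7$ you propose to ``invoke part (a) of the abstract, proved earlier in the paper'' — but that statement is Corollary~\ref{ML-imp}, which the paper derives as an immediate consequence of Theorem~\ref{main-thm}, the very statement you are proving. There is no independent earlier proof of it; the only prior result in that direction is \cref{HVNfreeper}, which needs the additional hypothesis that $G$ is $\mathbb{H}_t$-free, so the case where $G$ contains $\mathbb{H}_{\omega-3}$ still has to be handled by hand for every $\omega$. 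Hence the large-$\omega$ case is not ``automatic'' in your scheme.

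The remaining content is deferred with ``should yield'', ``typically produces'', ``I expect'', ``I anticipate'': the claims that $B_0=\emptyset$, that the cross-adjacencies among the $B_{ij}$ and $B_k$ are ``sharply restricted'', that $\omega\in\{5,6\}$ reduces to ``a short explicit list of configurations'', and the entire $7$-colouring for $\omega=4$ are all asserted, not proved. These are precisely where the work lies: the paper spends Sections~\ref{GcontF123} and~\ref{mainthm-f123free} on them, organising the argument quite differently — it fixes a \emph{triangle} $C$ (not a maximum clique), partitions $V(G)\setminus C$ into $X_i$, $Y_i$, $Z$, $L$ (so vertices complete to $C$, forming the clique $Z$, are allowed and essential), first disposes of graphs containing $F_1$, $F_2$ or $F_3$ (obtaining $\chi\leq 5$ or a decomposition), and then uses \cref{HVNfreeper} to reduce the $(F_1,F_2,F_3)$-free case to graphs containing $\mathbb{H}_2$ (for $\omega\geq 5$) or an HVN, $F_4$, $F_5$ (for $\omega=4$), with explicit stable-set covers in each branch. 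Nothing in your proposal substitutes for that analysis, and there is no evidence offered that your maximum-clique route terminates in a manageable case list; as it stands the proposal does not establish the theorem.
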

As a corollary of this result, we strengthen a result of Malyshev and Lobanova \cite{ML17} which states that if $G$ is a connected $(P_5,K_5-e)$-free graph with $\omega(G) \leq 3\cdot 6^7 = 839808$, then either $G$ is $3K_1$-free or $G$ has a clique cut-set.    (Note that    the graph $C_5$ is an imperfect ($3K_1, K_5-e$)-free graph which  has no clique cut-set, and we refer to  Figure~\ref{extexp} for nontrivial examples.)
\begin{cor}\label{ML-imp}
If $G$ is a connected ($P_5,K_5-e$)-free graph with $\omega(G)\geq 7$, then either $G$ is  the complement of a bipartite graph or $G$ has a clique cut-set.	 Moreover, the assumption on the lower bound of $\omega$ is tight. That is, there is a connected  ($P_5,K_5-e$)-free   imperfect graph $H$ with $\omega(H)=6$ and has no clique cut-set.
\end{cor}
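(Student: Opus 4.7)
The main assertion is an immediate consequence of Theorem~\ref{main-thm}(a). Since $\omega(G)\geq 7\geq 5$, that theorem yields three alternatives: $G$ is the complement of a bipartite graph, $G$ has a clique cut-set, or $\chi(G)\leq 6$. The chain of inequalities $\chi(G)\geq \omega(G)\geq 7 > 6$ rules out the last possibility, so one of the first two conclusions must hold, which is exactly what is claimed.

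For the tightness part, the plan is to exhibit an explicit graph $H$ (as displayed in Figure~\ref{extexp}) and to verify each required property: that $H$ is connected with $\omega(H)=6$, that $H$ is $(P_5,K_5-e)$-free, that $H$ is imperfect, and that $H$ has no clique cut-set. Connectedness, the value $\omega(H)=6$, and imperfectness (witnessed by an induced $C_5$, which is the only odd hole or odd antihole compatible with $P_5$-freeness) can be read off directly from the drawing. The $P_5$-freeness is checked by a short case analysis on the possible position of a purported induced $P_5$ in $H$.

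The two more delicate checks are $(K_5-e)$-freeness and the absence of a clique cut-set, and these constitute the main obstacle. For the former, I would use the equivalent reformulation that a graph is $(K_5-e)$-free if and only if, for every pair of non-adjacent vertices $u,v$, the common neighborhood $N(u)\cap N(v)$ induces a triangle-free subgraph; this reduces to enumerating the non-edges of $H$ (exploiting any symmetries of $H$ to cut down to a few non-edge orbits) and examining a small induced subgraph in each case. For the latter, one lists the maximal cliques of $H$ and verifies, one by one, that removing such a clique from $H$ leaves a connected graph. Both tasks are finite and mechanical, but they form the only nontrivial portion of the argument, since the substance of the corollary has already been absorbed into Theorem~\ref{main-thm}(a).
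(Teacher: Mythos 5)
Your proposal matches the paper's proof: the first assertion is derived from \cref{main-thm} exactly as you do (with $\chi(G)\geq\omega(G)\geq 7$ eliminating the $\chi(G)\leq 6$ alternative), and the tightness is established by exhibiting the graph $H^*$ of Figure~\ref{extexp} and checking its properties, which the paper simply asserts and you outline how to verify. One small aside: your parenthetical claim that $C_5$ is the only odd hole or antihole compatible with $P_5$-freeness is not quite right (every odd antihole $\overline{C_{2k+1}}$ has independence number $2$ and is therefore $P_5$-free), but this does not affect the argument since an induced $C_5$ suffices to witness imperfectness.
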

\begin{proof}
The first assertion is an immediate consequence of \cref{main-thm}. For the second assertion, consider the graph $H^*$ given in Figure~\ref{extexp}. Then $H^*$ is a connected  ($P_5,K_5-e$)-free   imperfect graph  with $\omega(H^*)=6$ and has no clique cut-set.
\end{proof}

Further, we have the following theorem which improves an earlier stated result of Xu \cite{Xu}.

\begin{figure}[t]
\centering
 \includegraphics[height=2.5cm]{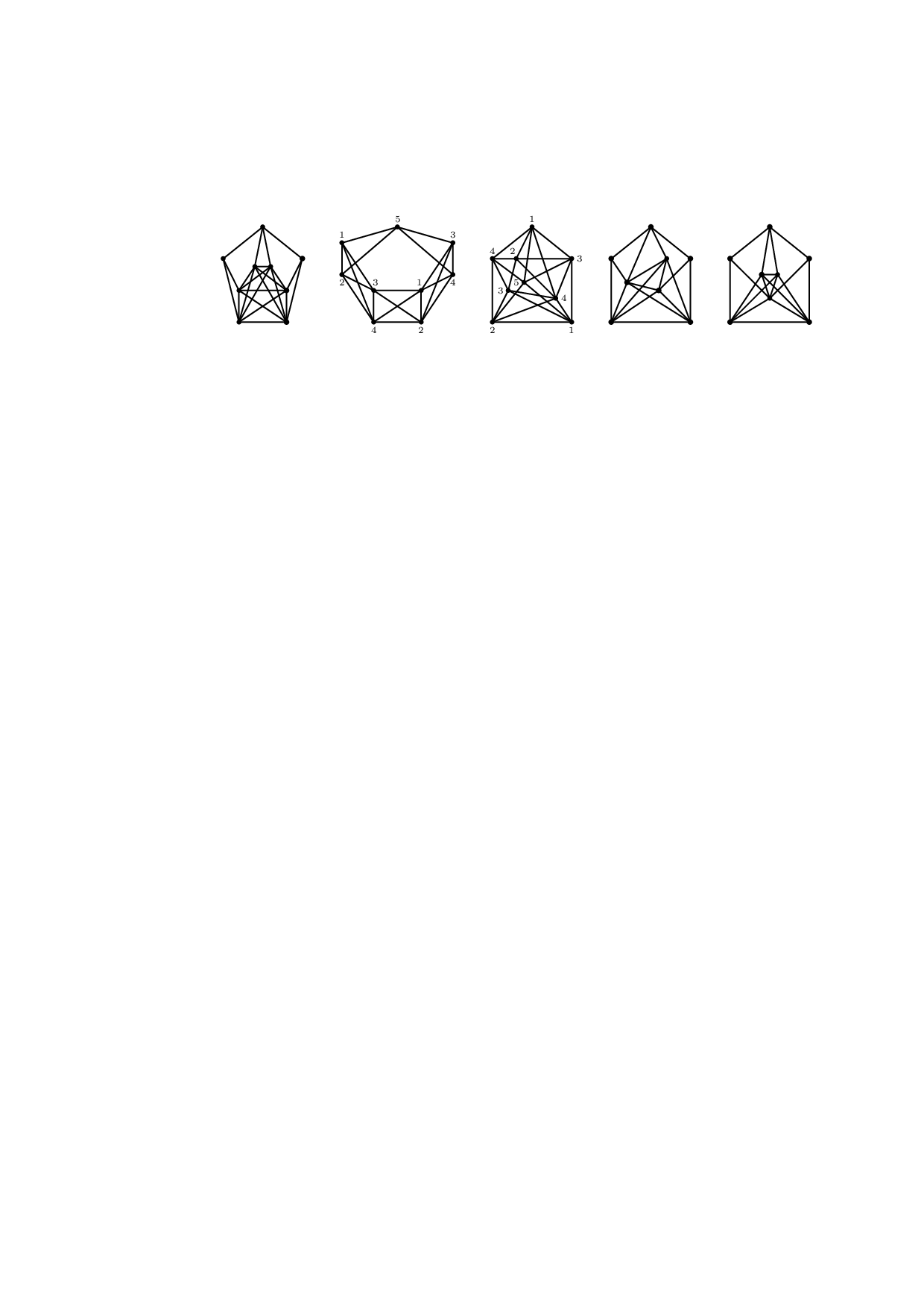}
\caption{The graphs $H^*$, $G_1$, $G_2$, $G_3$ and $G_4$ (left to right). In $G_1$ and $G_2$, the labels represent a $5$-coloring.}\label{extexp}
\end{figure}

\begin{theorem} \label{chr-bnd}
If $G$ is a  ($P_5,K_5-e$)-free graph with $\omega(G)\geq 4$, then $\chi(G)\leq \max\{7, \omega(G)\}$. Moreover, the bound is tight when $\omega(G)\notin \{4,5,6\}$.
\end{theorem}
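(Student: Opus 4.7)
The plan is to deduce \cref{chr-bnd} from \cref{main-thm} by strong induction on $|V(G)|$, together with the standard clique-cut-set combining argument, and to exhibit trivial extremal graphs for the tightness claim. Let $G$ be a $(P_5,K_5-e)$-free graph with $\omega(G)\geq 4$. By \cref{main-thm}, exactly one of the following holds: (i) $G$ is the complement of a bipartite graph, (ii) $G$ admits a clique cut-set, or (iii) $\chi(G)\leq 7$. In case (i) the graph $G$ is perfect, so $\chi(G)=\omega(G)\leq \max\{7,\omega(G)\}$, and in case (iii) there is nothing to show.

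The substantive step is case (ii). Let $K$ be a clique cut-set of $G$ and let $A_1,\ldots,A_r$ be the vertex sets of the components of $G-K$. Writing $G_j:=G[A_j\cup K]$, each $G_j$ is a proper induced subgraph of $G$, hence $(P_5,K_5-e)$-free, and satisfies $\omega(G_j)\leq \omega(G)$. If $\omega(G_j)\geq 4$, the induction hypothesis gives $\chi(G_j)\leq \max\{7,\omega(G_j)\}\leq \max\{7,\omega(G)\}$. If instead $\omega(G_j)\leq 3$, then $G_j$ is in particular $(P_5,K_4)$-free and the bound of Esperet et al.~\cite{ELMM} quoted in the introduction yields $\chi(G_j)\leq 5\leq 7$. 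Since every two blocks $G_j,G_{j'}$ meet precisely in the clique $K$, we may permute the colors used on each $G_j$ so that the colorings agree on $K$, and concatenate them to obtain a proper coloring of $G$ with $\max_j\chi(G_j)\leq \max\{7,\omega(G)\}$ colors. The base of the induction is trivial, since any $(P_5,K_5-e)$-free $G$ with $\omega(G)\geq 4$ and small order is handled directly by one of the three cases of \cref{main-thm}.

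For tightness when $\omega(G)\notin\{4,5,6\}$, observe that the complete graph $K_n$ has no non-edge at all, so it cannot contain $K_5-e$ as an induced subgraph; it is clearly also $P_5$-free. Hence $K_n$ is $(P_5,K_5-e)$-free with $\chi(K_n)=\omega(K_n)=n$, and taking $n=7$ or any $n\geq 8$ realises $\chi=\max\{7,n\}$. The main obstacle in the whole argument is absorbed into \cref{main-thm}; once that structural dichotomy is proved, the induction above and the standard clique-cut-set combining step close the proof with essentially no further work.
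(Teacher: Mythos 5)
Your proof is correct and follows essentially the same route as the paper: induction on $|V(G)|$ via the dichotomy of \cref{main-thm}, the standard recolouring argument across a clique cut-set (with perfection of complements of bipartite graphs handling case (i)), and a perfect extremal example for tightness --- the paper's tight family in fact contains $K_n$ as a special case, and you are even slightly more careful than the paper in treating blocks with $\omega\leq 3$ via the $(P_5,K_4)$-free bound of Esperet et al. The only nit is that \cref{main-thm} is stated for \emph{connected} graphs, so you should first reduce to the connected case by colouring components separately; this is a one-line fix that the paper makes explicitly.
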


\begin{proof}
Let $G$ be a  ($P_5,K_5-e$)-free graph with $\omega(G)\geq 4$. We prove the first assertion by induction on $|V(G)|$. We may assume that $G$ is  connected and is imperfect. Then from \cref{main-thm}, either $G$ has a clique cut-set or   $\chi(G)\leq 7$. If $\chi(G)\leq 7$, then we are done. So we may assume that $G$ has a clique cut-set, say $K$. Let $A$ and $B$ be a partition of $V(G) \sm K$ such that $A, B\neq \es$ and  $A$ is anticomplete to $B$. Then $\chi(G) =\max\{\chi(G[K\cup A]), \chi(G[K\cup B])\}$, and hence by induction hypothesis, we have $\chi(G) \leq \max\{\max\{7, \omega(G[K\cup A])\}, \max\{7, \omega(G[K\cup B])\}\} \leq  \max\{7, \omega(G)\}$.  This proves the first assertion.
To prove the second assertion, consider the graph  $G$
  that consists of a complete graph $K_t$ where $t\geq 7$, say $Q$ such that (a) each component of $G[V(G)\sm V(Q)]$ is
a complete graph, (b) for each component $K$ in $G[V(G)\sm V(Q)]$, there is a unique $v\in V(Q)$ such that $\{v\}$ is complete to $V(K)$, and (c) no other
edges in $G$.   Clearly  $G$ is a ($P_5,K_5-e$)-free perfect graph,  and so $\chi(G) = \omega(G) = t$. This proves \cref{chr-bnd}. \end{proof}

We remark that there are ($P_5,K_5-e$)-free graphs with $\omega=4$ and $\chi=5$.
For instance, consider the graphs $G_1$ and $G_2$  given in Figure~\ref{extexp}.
 Then clearly for $j\in \{1,2\}$:   each  $G_j$ is ($P_5$, $K_5-e$)-free with $\chi(G_j)\leq 5$ (see  Figure~\ref{extexp} for a $5$-coloring), and it is easy check that $\omega(G_j)=4$, $\alpha(G_j)=2$, and hence $\chi(G_j)\geq \lceil \frac{|V(G_j)|}{\alpha(G_j)} \rceil = 5$.

\smallskip
Next we have the following corollary that partially answers \cref{prob}. That is, every ($P_5,K_5-e$)-free graph is near  optimal colorable.

\begin{cor} \label{chr-bnd-final}
If $G$ is a  ($P_5,K_5-e$)-free graph, then $\chi(G)\leq \max\{7, \omega(G)\}$.
\end{cor}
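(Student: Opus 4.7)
The plan is to split on the value of $\omega(G)$ and reduce to results already recorded in the excerpt. The case $\omega(G)\geq 4$ is precisely the content of \cref{chr-bnd}, which gives $\chi(G)\leq \max\{7,\omega(G)\}$ directly. So the only work is to handle small clique number, and the key observation is that $K_5-e$ contains $K_4$ as an induced subgraph (take the triangle of $K_5-e$ together with either of the two non-adjacent vertices). Consequently, when $\omega(G)\leq 3$ there is no constraint added by $(K_5-e)$-freeness, and $G$ is simply a $(P_5,K_4)$-free graph.

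First I would invoke the Esperet--Lemoine--Maffray--Morel bounds quoted in the excerpt: every $(P_5,K_3)$-free graph has chromatic number at most $3$, and every $(P_5,K_4)$-free graph has chromatic number at most $5$. Thus if $\omega(G)\leq 2$ we get $\chi(G)\leq 3\leq 7$, and if $\omega(G)=3$ we get $\chi(G)\leq 5\leq 7$. In both subcases $\max\{7,\omega(G)\}=7$, so the inequality holds.

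Then I would combine this with the case $\omega(G)\geq 4$, which is exactly \cref{chr-bnd}. Putting the two cases together yields $\chi(G)\leq \max\{7,\omega(G)\}$ for every $(P_5,K_5-e)$-free graph $G$, which is the desired conclusion.

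There is no real obstacle here: the corollary is essentially a bookkeeping step that extends \cref{chr-bnd} from $\omega(G)\geq 4$ to all values of $\omega(G)$ by absorbing the small clique cases into the constant $7$ appearing in the maximum. The only thing to be careful about is the routine containment $K_4\sset K_5-e$, which is what allows the $(P_5,K_4)$-free bound of Esperet et al. to be applied verbatim when $\omega(G)\leq 3$.
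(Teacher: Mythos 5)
Your proposal is correct and follows essentially the same route as the paper: the paper also reduces the case $\omega(G)\leq 3$ to the $(P_5,K_4)$-free bound $\chi(G)\leq 5$ of Esperet et al.\ and then invokes \cref{chr-bnd} for $\omega(G)\geq 4$. One small remark: the fact that $G$ is $K_4$-free when $\omega(G)\leq 3$ is immediate from the definition of clique number and does not require the containment of $K_4$ in $K_5-e$, but this does not affect the validity of the argument.
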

\begin{proof}
Let $G$ be a  ($P_5,K_5-e$)-free graph. If $\omega(G) \leq  3$, then $G$ is ($P_5, K_4$)-free, and hence $\chi(G)\leq 5$ \cite{ELMM}. So we may assume that $\omega(G)\geq 4$. Now  the corollary follows from \cref{chr-bnd}.
\end{proof}

  \no{\bf Algorithmic aspects}. Given  a graph $G$  and a positive integer $k$, the $k$-\textsc{Colorability Problem} asks whether or not $G$ admits a $k$-coloring.  Given a graph $G$, the \textsc{Chromatic Number} (or \textsc{Minimum Vertex Coloring}) \textsc{Problem} asks whether or not $\chi(G)\leq k$. The $k$-\textsc{Colorability Problem} is well-known to be $NP$-complete for any fixed $k\geq 3$. Hence  the \textsc{Chromatic Number Problem} is known to be $NP$-hard in general, and is known to be $NP$-hard even for the class of $P_5$-free graphs \cite{KKTW}. We refer to a survey of Golovach et al.~\cite{GJPS} for more details and results. Using Lov\'asz theta function,
  Ju and Huang \cite{Ju-Huang} observed that if $\cal G$ is a given   hereditary class of graphs such that
every   $G \in \cal G$ satisfies   $\chi(G)\leq \max\{c, \omega(G)\}$ for some constant $c$, and if the $k$-\textsc{Colorability Problem} for $\cal G$ is polynomial time solvable for every fixed positive integer $k \leq c-1$, then the \textsc{Chromatic Number Problem} for
$\cal G$ can be solved in  polynomial time.  Since the $k$-\textsc{Colorability Problem} for the class of $P_5$-free graphs can be solved in  polynomial time for every fixed positive integer $k \leq 6$ \cite{HKLSS}, from \cref{chr-bnd-final}, we conclude that the \textsc{Chromatic Number Problem}  for the class of ($P_5,K_5-e$)-free graphs    can be solved in  polynomial time. We remark that this conclusion may also be obtained from \cref{main-thm} by using  clique separator decomposition techniques (see \cite{ML17}) and  a result of Ho\`ang et al.~\cite{HKLSS}.

\begin{figure}
\centering
 \includegraphics{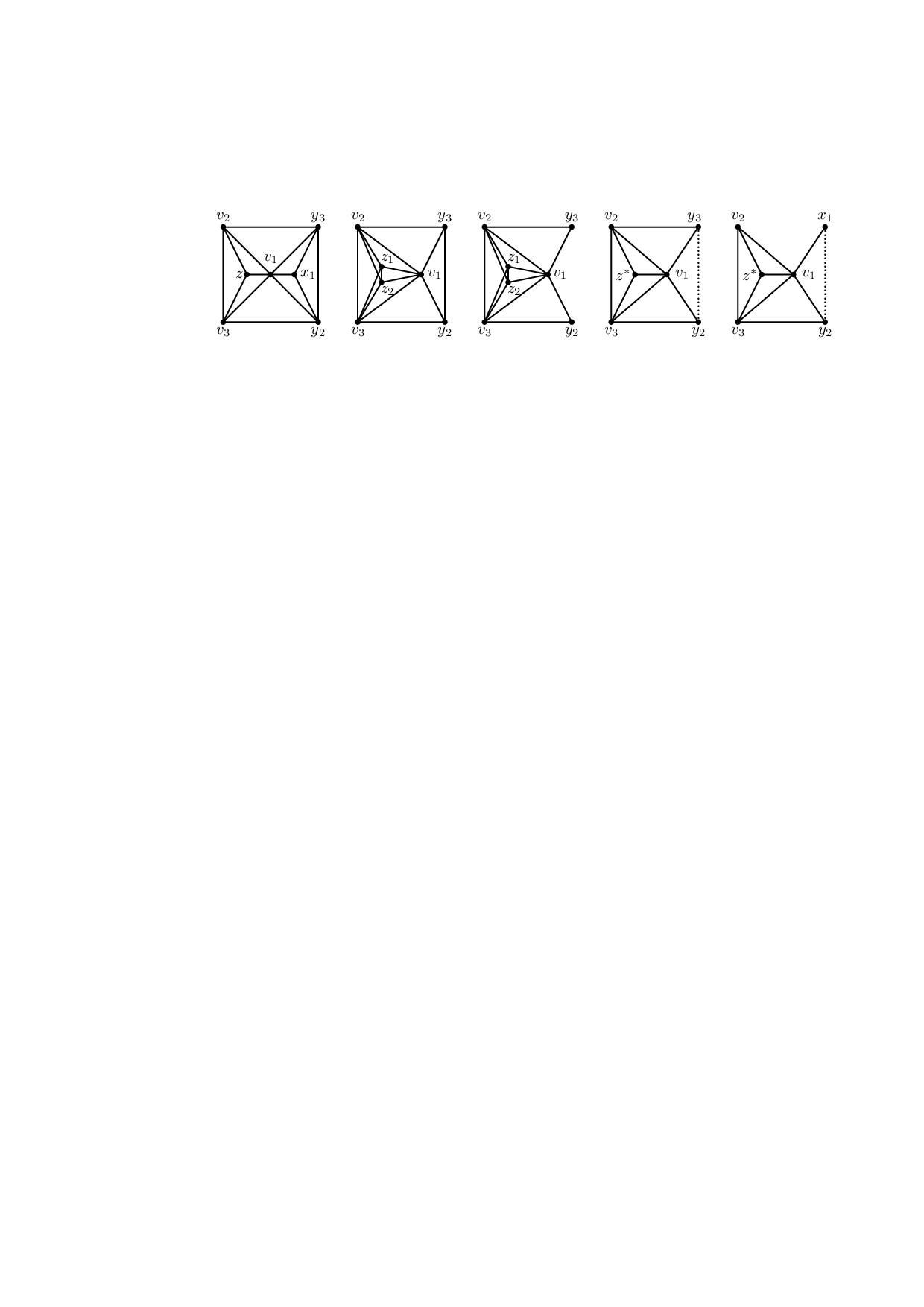}
\caption{ Graphs $F_1$, $F_2$, $F_3$,   $F_4$ and $F_5$   (left to right). In $F_4$ and $F_5$, the dotted lines between two vertices mean that  such  vertices may or may not be adjacent.}\label{fig-F123}
\end{figure}

In Section~\ref{not-ter}, we give some notation, terminology and  preliminaries which are used in this paper. The rest of the paper is devoted to the proof of \cref{main-thm}, and is given in Section~\ref{mainthm-proof}.  Indeed, the proof of \cref{main-thm} is based on some intermediate  results using certain special graphs; see Figure~\ref{fig-F123}. First we show that \cref{main-thm} holds if our graph $G$ contains one of   $F_1$, $F_2$ or $F_3$; see Section~\ref{GcontF123}. (In particular, in each case, we show that   either $G$ is  the complement of a bipartite graph or $G$ has a clique cut-set or  $\chi(G)\leq 5$).  Then we show that
  \cref{main-thm} holds when $G$ is ($F_1, F_2, F_3$)-free; see Section~\ref{mainthm-f123free}.

 Note that  the graph $G_2$ is an imperfect graph, has no clique cut-set and contains an $F_1$ with $\chi(G_2)=5$, the graph $G_3$ is an imperfect graph, has no clique cut-set and contains an $F_2$ with $\chi(G_3)=\omega(G_3)=5$, and the graph $G_4$ is an imperfect graph, has no clique cut-set and contains an $F_3$ with $\chi(G_4)=\omega(G_4)=5$.

\section{Notation, terminology and preliminaries}\label{not-ter}

 For missing notation and terminology, we refer to Bondy and Murty \cite{Bondy-Murty}.
Given a graph $G$, $V(G)$ denotes its vertex-set and $E(G)$ denotes its edge-set. A vertex $u$ is a \emph{neighbor} of $v$ in $G$ if $uv\in E(G)$.
Given a vertex $v\in V(G)$, $N_G(v)$ is the set of neighbors
of $v$ in $G$; we may drop the subscript $G$, and write $N(v)$ when the relevant graph is
unambiguous.  For a vertex subset $X\subseteq V(G)$, $G[X]$ denotes the subgraph of $G$ induced by $X$.    A vertex subset $X$ of $G$ is a \emph{homogeneous} set in $G$   if every vertex in $V(G)\sm X$ which has a neighbor in $X$ is complete to $X$.  A {\it stable set} in a graph $G$ is a set of mutually nonadjacent vertices in $G$,  and the {\it independence number}, denoted by $\alpha(G)$, is the size of a largest stable set in $G$.

 A {\it $5$-ring}~ is the graph
whose vertex-set can be partitioned into five non-empty stable sets $S_1, S_2, \ldots, S_5$ such that for each $i$ mod $5$, every vertex
in $S_i$ is adjacent to every vertex in $S_{i+1}\cup S_{i-1}$ and to no vertex in $S_{i+2}\cup S_{i-2}$.
  Given a graph $G$, a {\it $5$-ring-component} of   $G$ is a component of $G$ which is isomorphic to a $5$-ring, and a {\it big-component} of  $G$ is a component of $G$ which has at least two vertices.

 \smallskip
 For a positive integer $t$,   let $\mathbb{H}_t$ be the graph obtained from $K_{t+3}$ by adding a new vertex and joining it to exactly two vertices of $K_{t+3}$. Note that the graph $\mathbb{H}_1$ is isomorphic to HVN (see Figure~\ref{fig}).

 \smallskip

We will use  the following simple observations often. {\it Let $G$ be a $(K_5-e)$-free graph. Then:}
\begin{enumerate}[label={(O\arabic*)}]	
\item\label{uvadjnbd} {\it For any two adjacent vertices in $G$, say $u$ and $v$, $G[N(u)\cap N(v)]$ is $P_3$-free.}
		
{\it Proof}. If there is a $P_3$ in $G[N(u)\cap N(v)]$  with vertices,  say $p,q$ and $r$, then $\{p,q,r,u,v\}$ induces a $K_5-e$. So  \ref{uvadjnbd}  holds. $\Diamond$
		
\item\label{uvnonadjnbd} {\it  For any two nonadjacent vertices in $G$, say $u$ and $v$, $G[N(u)\cap N(v)]$ is $K_3$-free.}

{\it Proof}. If there is a $K_3$ in $G[N(u)\cap N(v)]$  with vertices,  say $p,q$ and $r$, then $\{p,q,r,u,v\}$ induces a $K_5-e$. So \ref{uvnonadjnbd} holds. $\Diamond$

\item\label{4setsclique} {\it If there are four mutually disjoint nonempty subsets of $V(G)$ which are complete to each other, then their union is a clique.}
			
			{\it Proof}. This follows from \ref{uvnonadjnbd}. $\Diamond$
	\end{enumerate}

We will also use the following known results.
	\begin{rlt}[\cite{ELMM}]\label{P5K3prop}
		(a) If $G$ is a ($P_5, K_3$)-free graph, then each component of $G$ is either bipartite or a $5$-ring.  In particular,  $\chi(G)\leq 3$. (b) If $G$ is a $(P_5,K_4)$-free graph, then  $\chi(G)\leq 5$.
	\end{rlt}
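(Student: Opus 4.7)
My approach is to establish (a) through an explicit structural analysis of $(P_5, K_3)$-free graphs, and then to derive (b) by combining (a) with a dominating-clique reduction.

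For part (a), fix a connected component $H$ and assume it is not bipartite. Let $C = v_1 v_2 v_3 v_4 v_5$ be a shortest induced odd cycle of $H$; triangle-freeness forces $|C| \geq 5$, while $|C| \geq 7$ would produce an induced $P_5$ among five consecutive vertices of $C$, so $|C| = 5$. For $i \in \mathbb{Z}/5\mathbb{Z}$ set
\[
S_i = \{\,x \in V(H) : N(x) \cap V(C) = \{v_{i-1}, v_{i+1}\}\,\},
\]
so that $v_i \in S_i$. Since the independence number of $C_5$ is $2$ and $H$ is triangle-free, any $x \notin V(C)$ has at most two neighbors on $C$, and when it has two they sit at distance two on $C$. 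A vertex with exactly one neighbor $v_i$ on $C$ induces a $P_5$ together with the four-vertex arc of $C$ avoiding $v_i$; a vertex at graph distance at least $2$ from $C$ induces a $P_5$ by attaching a shortest-path predecessor to a suitable arc through its foot in $C$. Hence every vertex lies in some $S_i$. Triangle-freeness then forces each $S_i$ to be stable and $S_i$ anticomplete to $S_{i \pm 2}$, while one further $P_5$-argument (produced by taking nonadjacent $u \in S_i$, $u' \in S_{i+1}$ and extending through $v_{i-1}$) shows $S_i$ complete to $S_{i \pm 1}$. Therefore $H$ is a $5$-ring, and $\chi(H) \leq 3$ via the colouring $S_1 \cup S_3 \mapsto 1$, $S_2 \cup S_4 \mapsto 2$, $S_5 \mapsto 3$.

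For part (b), I would induct on $|V(G)|$ and reduce to the case where $G$ is connected with $\omega(G) \in \{3,4\}$, the case $\omega(G) \leq 2$ being covered by (a). By the theorem of Bacs\'o and Tuza, every connected $P_5$-free graph admits a dominating clique $K$; since $G$ is $K_4$-free, $|K| \leq 3$. Partition $V(G) \setminus K$ by neighborhood pattern on $K$. Any class of vertices adjacent to at least two vertices of $K$ is a stable set, for otherwise an internal edge together with two of its neighbors in $K$ would yield a $K_4$. Each class of vertices attached to exactly one vertex of $K$ lies in that vertex's neighborhood, hence is $(P_5, K_3)$-free and $3$-colourable by (a). A case analysis on $|K| \in \{1,2,3\}$, combining the three colours used inside each one-neighbor class, the single colour used on each multi-neighbor class, and the colours reserved for $K$ itself, then produces a proper $5$-colouring.

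The principal obstacle is the coordination step in (b): one must show that the $3$-colourings of the one-neighbor classes supplied by (a) can be aligned with the palette used on the rest of $G$, so that edges between different classes --- especially between a one-neighbor class and the opposite two-neighbor stable set --- never force a sixth colour. This relies on a further use of $P_5$-freeness to tightly constrain these inter-class edges and is the technical heart of the result.
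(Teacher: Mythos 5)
This statement is Theorem~A of the paper: it is imported from Esperet, Lemoine, Maffray and Morel and used as a black box, so the paper contains no proof of it to compare against. Judged on its own terms, your argument for part~(a) is correct and complete. The shortest-odd-cycle argument forcing an induced $C_5$, the classification of every vertex into one of the sets $S_i$ (a one-neighbour vertex or a vertex at distance at least $2$ from the cycle each yields an induced $P_5$), the verification that each $S_i$ is stable, anticomplete to $S_{i\pm 2}$ and complete to $S_{i\pm 1}$, and the explicit $3$-colouring $S_1\cup S_3$, $S_2\cup S_4$, $S_5$ are all sound; this is the standard proof of that assertion.

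Part~(b) is not proved, for two reasons. First, the Bacs\'o--Tuza theorem states that every connected $P_5$-free graph has a dominating clique \emph{or} a dominating induced $P_3$; the clique alternative alone is false, with $C_5$ itself as a counterexample (no vertex or edge of $C_5$ dominates it). So your reduction is missing a case from the outset. (A smaller slip: a $K_4$-free graph has $\omega(G)\leq 3$, so the case $\omega(G)=4$ you propose to handle cannot occur.) Second, and decisively, the step you yourself describe as ``the technical heart'' is the entire content of the theorem. With $|K|=3$ your decomposition yields three one-neighbour classes each requiring up to $3$ colours by part~(a), three stable two-neighbour classes, one stable three-neighbour class, and the clique $K$; the naive palette count is far above $5$, and you offer no argument for how these colourings are to be merged. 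Since the bound $5$ is attained already at clique number $3$, there is no slack to absorb a lossy merge, and establishing the required coordination is precisely what occupies the bulk of the ELMM paper. As written, part~(b) is a plan with its load-bearing lemma absent; the safe course here is to retain the citation rather than attempt to reprove it in passing.
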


\begin{rlt}[\cite{Geiber}]\label{P5diamond-col}
	 Every $(P_5,K_4-e)$-free graph $G$ satisfies $\chi(G)\leq \max\{3, \omega(G)\}$.
	\end{rlt}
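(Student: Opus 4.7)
The plan is to prove the bound by induction on $|V(G)|$, splitting into cases based on $\omega(G)$. For the base range $\omega(G)\leq 2$, the graph $G$ is $(P_5,K_3)$-free, so \cref{P5K3prop}(a) immediately yields $\chi(G)\leq 3=\max\{3,\omega(G)\}$. The substantive work is in the cases $\omega(G)=3$ and $\omega(G)\geq 4$.

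The key structural observation I would exploit throughout is that in any $(K_4-e)$-free graph, the neighborhood $G[N(v)]$ of every vertex $v$ is $P_3$-free: a $P_3$ in $N(v)$ together with $v$ induces a diamond. Hence $N(v)$ is a disjoint union of cliques, and each such clique has at most $\omega(G)-1$ vertices. This implies in particular that if $|K|\geq 3$ is a clique, every vertex outside $K$ has at most one neighbor in $K$, so $K$ partitions the rest of $V(G)$ into ``attachment classes''.

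For $\omega(G)=3$ I would sharpen the bound to $\chi(G)\leq 3$. Using the fact that any two triangles share at most one vertex (else a diamond arises) and that each $N(v)$ is a disjoint union of edges and isolated vertices, I would show via $P_5$-freeness that $G$ decomposes into pieces that are either bipartite or a $5$-ring glued at cut-vertices through triangles; combined with \cref{P5K3prop}(a) applied to the triangle-free residue, this yields a $3$-coloring. For $\omega(G)\geq 4$ I would fix a maximum clique $K=\{v_1,\ldots,v_k\}$ and partition $V(G)\setminus K = V_0\cup V_1\cup\cdots\cup V_k$, where $V_i$ for $i\geq 1$ consists of vertices whose unique neighbor in $K$ is $v_i$, and $V_0$ consists of vertices with no neighbor in $K$. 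A short $P_5$-free connectivity argument (any vertex of $V_0$ would, together with a shortest path to $K$, build a $P_5$) forces $V_0=\emptyset$ in a connected graph. Each $G[V_i\cup\{v_i\}]$ is again $(P_5,K_4-e)$-free with clique number at most $\omega(G)$, so by induction admits an $\omega(G)$-coloring in which I may insist that $v_i$ receives color $i$; since the sets $V_i$ are pairwise anticomplete (another consequence of diamond-freeness plus the maximality of $K$, else a vertex of $V_j$ would become a second neighbor of $v_i$ in $K\cup V_i$), these colorings glue to a proper $\omega(G)$-coloring of $G$.

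The main obstacle I expect is the case $\omega(G)=3$, where the bound $\chi(G)\leq 3$ is tight and there is no room for slack. Ruling out $4$-chromatic obstructions requires the most delicate use of the $P_5$-free hypothesis, since one must show that the only triangle-incidence structures compatible with both forbidden induced subgraphs are $3$-colorable; the secondary obstacle is making the coloring-stitching step for $\omega(G)\geq 4$ fully rigorous, in particular verifying that anticompleteness between distinct attachment classes $V_i$ and $V_j$ really follows from $(P_5,K_4-e)$-freeness rather than requiring an additional hypothesis.
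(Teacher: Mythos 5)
The paper does not prove this statement---it is quoted from Gei{\ss}er's thesis as a known result---so your attempt can only be judged on its own merits, and as written it has genuine gaps in both substantive cases. In the case $\omega(G)\geq 4$, the two structural claims that make the gluing work are both false. First, $V_0=\emptyset$ fails: take a $K_4$ on $\{v_1,v_2,v_3,v_4\}$, add $w$ adjacent only to $v_1$ and $u$ adjacent only to $w$. Any induced path meets the $K_4$ in at most two vertices, so the graph is $P_5$-free, and one checks directly that it is $(K_4-e)$-free; yet $u$ has no neighbour in the maximum clique. Your shortest-path argument only excludes vertices at distance at least $3$ from $K$; for a vertex at distance $2$ the natural extension $u$--$w$--$v_i$--$v_j$--$v_\ell$ has the chord $v_iv_\ell$ and is not induced. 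Second, the pairwise anticompleteness of the attachment classes fails: take a $K_4$ plus $x$ adjacent to $v_1$, $y$ adjacent to $v_2$, and the edge $xy$. This graph is again $(P_5,K_4-e)$-free (the set $\{x,y,v_1,v_2\}$ induces a $C_4$, not a diamond, so nothing forbids the edge $xy$), and $x\in V_1$ is adjacent to $y\in V_2$. With cross-edges between distinct classes the separately obtained colourings of the graphs $G[V_i\cup\{v_i\}]$ cannot simply be superimposed, so the inductive gluing step collapses; the stated justification (``a vertex of $V_j$ would become a second neighbour of $v_i$'') does not correspond to any actual forbidden configuration.

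The case $\omega(G)=3$ is not a proof but a plan, and its central structural assertion is inaccurate: the triangular prism (two triangles joined by a perfect matching) is $(P_5,K_4-e)$-free with $\omega=3$, is $2$-connected, and is neither bipartite nor a $5$-ring, so it is not captured by a decomposition into ``bipartite or $5$-ring pieces glued at cut-vertices through triangles.'' Since this is exactly the tight case of the theorem, it cannot be waved through; a correct treatment needs an actual structure or colouring argument for diamond-free, $P_5$-free graphs whose neighbourhoods are induced matchings. In short, the sound parts of your write-up are the easy observations (neighbourhoods are $P_3$-free, hence disjoint unions of cliques; the base case $\omega\leq 2$ via Theorem~\ref{P5K3prop}(a)), while every step that carries real weight either fails on a small example or is left unproved.
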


\section{($P_5,K_5-e$)-free graphs with $\omega \geq 4$}

To prove \cref{main-thm}, we begin by proving some simple properties when  a ($P_5,K_5-e$)-free graph contains a $K_3$, and use them   in the latter sections.  From now on, we assume that the arithmetic operations on the indices are in integer modulo $3$.

%
%

\subsection{Properties of $(P_5,K_5-e)$-free graphs that contain a $K_3$}\label{genprop}
Let $G$ be a connected ($P_5,K_5-e$)-free graph with vertex-set $V(G)$ and edge-set $E(G)$ that contains a $K_3$ induced by the vertices, say $v_1,v_2$ and $v_3$. Let $C:=\{v_1,v_2,v_3\}$. For $i\in \{1,2,3\}$, we define:
\begin{eqnarray*}
X_i &:=&\{u\in V(G)\sm C \mid N(u)\cap C=\{v_i\}\}, \\
Y_i&:=&\{u\in V(G)\sm C \mid N(u)\cap C= C\sm \{v_i\}\},\\
Z&:=& \{u\in V(G)\sm C \mid N(u)\cap C= C\}, \mbox{and}\\
L&:=& \{u\in V(G)\sm C \mid N(u)\cap C= \es\}.
\end{eqnarray*}
We let $X:=X_1\cup X_2\cup X_3$ and $Y:=Y_1\cup Y_2\cup Y_3$.  Then clearly $V(G)= C\cup X\cup Y \cup Z\cup L$. Moreover, the following properties hold, where $i\in \{1,2,3\}$, $i$ mod $3$.

\begin{claim1}\label{Zclq}
$C\cup Z$ is a clique.
\end{claim1}
{\it Proof of \cref{Zclq}}.~If there are nonadjacent vertices, say $z$ and $z'$ in $Z$, then $\{v_1, v_{2},v_{3},z, z'\}$ induces a $K_5-e$. So \ref{Zclq} holds. $\Diamond$
	
	\begin{claim1} \label{BZancom}  $G[Y_i]$ is $P_3$-free, and $Y$ is anticomplete to $Z$.  \end{claim1}	
	{\it Proof of \ref{BZancom}}.~Since $Y_i$ is complete to $\{v_{i+1}, v_{i-1}\}$,  $G[Y_i]$ is $P_3$-free (by \ref{uvadjnbd}).  Next, if there are adjacent vertices, say  (up to symmetry) $y\in Y_1$ and $z\in Z$, then $\{v_{2},v_{3},z, y,v_1\}$ induces a $K_5-e$; so $Y$ is anticomplete to $Z$. This proves \ref{BZancom}. $\Diamond$
	
	\begin{claim1} \label{AiBi+1homset}
	  The vertex-set of each component of $G[X_i\cup L]$ is a homogenous set in $G[X_i\cup X_{i+1}\cup Y_{i+2}\cup L]$. Likewise, the vertex-set of each component of $G[X_i\cup L]$ is a homogenous set in $G[X_i\cup X_{i+2}\cup Y_{i+1}\cup L]$. \end{claim1}	
	{\it Proof of  \ref{AiBi+1homset}}.~We prove the assertion for $i = 1$. Suppose to the contrary that there are vertices, say $p, q\in X_1\cup L$ and $r\in X_2 \cup Y_3$ such that
	$pq, pr \in E(G)$ and $qr \notin E(G)$. Then $\{q,p,r,v_2,v_3\}$ induces a $P_5$, a contradiction. So \ref{AiBi+1homset} holds. $\Diamond$
	
	\begin{claim1} \label{aZcard}  Each vertex in $X$ has at most one neighbor in $Z$. \end{claim1}	
	{\it Proof of  \ref{aZcard}}.~If there is a vertex in $X_i$, say $x$, which has two neighbors in $Z$, say $z$ and $z'$, then $zz'\in E(G)$ (by \ref{Zclq}), and then $\{v_i,z,z',v_{i+1},x\}$ induces a $K_5-e$. So \ref{aZcard} holds. $\Diamond$
	
	 \begin{claim1}\label{Atcorel} Suppose that there is a vertex $t\in L$ which has a neighbor in $X_i$. Then the following hold:
	 (a)  $\{t\}$ is complete to $X_{i+1}\cup X_{i+2}$. (b) $X_i$ is complete to $X_{i+1}\cup X_{i+2}$.
 \end{claim1}
{\it Proof of \ref{Atcorel}}.~We prove for $i = 1$. By our assumption, $t$ has a neighbor in $X_1$, say $x$.\\
  $(a)$:~If there is a vertex in $X_2$, say $p$,  such that $tp \notin E(G)$, then since $\{t, x, v_1, v_2, p\}$ does not induce a $P_5$, we have $xp \in E(G)$, and then $\{t, x, p, v_2, v_3\}$ induces a $P_5$; so $\{t\}$ is complete to $X_2$. Likewise, $\{t\}$ is complete to $X_3$. This proves \ref{Atcorel}:(a). $\diamond$

  \smallskip	
	\no{$(b)$}:~If there are nonadjacent vertices, say $p\in X_1$ and $q\in X_2 \cup X_3$, then $qt, pt\in E(G)$ (by \ref{Atcorel}:(a)), and then $\{p, t, q, v_2, v_3\}$ induces a $P_5$; so \ref{Atcorel}:(b) holds. $\Diamond$

 \begin{claim1}\label{BiBi+1bigcomancom}   If $Z\neq \es$, then the vertex-set of any big-component of $Y_i$ is anticomplete $Y_{i+1}\cup Y_{i-1}$. \end{claim1}
{\it Proof of  \ref{BiBi+1bigcomancom}}.~We will show  for $i=1$.  Let $z\in Z$. Suppose  that the assertion is not true. Then there are vertices, say $p,q\in Y_1$   and    $r\in Y_2\cup Y_3$ such that $pq, pr\in E(G)$. We may assume that $r\in Y_2$.  Now since $\{p,q,v_3,v_2,r\}$ does not induce a $K_5-e$, we have $qr\notin E(G)$, and then $\{q,p,r,v_1,z\}$ induces a $P_5$ (by \ref{BZancom}), a contradiction. So \ref{BiBi+1bigcomancom} holds. $\Diamond$

\begin{claim1} \label{lZcard} Each vertex in $L$ has at most two neighbors in $Z$.\end{claim1}
{\it Proof of \ref{lZcard}}.~If there is a vertex in $L$, say $t$, which has three neighbors in $Z$, say $z_1,z_2$ and $z_3$, then since $Z$ is a clique (by \ref{Zclq}),  $\{z_1,z_2,z_3,v_1,t\}$ induces a $K_5-e$. So \ref{lZcard} holds. $\Diamond$

\begin{claim1}\label{F2azancomLcorel} For $j\in \{i+1,i-1\}$, if there are vertices, say $p\in X_i\cup Y_{j}$, $q\in X_{j}\cup Y_{i}$, $z\in Z$  and $t\in L$ such that $pt\in E(G)$, then either $pq\in E(G)$ or $qt\in E(G)$. Further if $pz,qz,tz\notin E(G)$, then $pq,qt\in E(G)$.  \end{claim1}
		{\it Proof of \ref{F2azancomLcorel}}.~Otherwise,  one of $\{t,p,v_i,v_{j},q\}$ or $\{t,p,q,v_{j},z\}$  induces a $P_5$.  $\Diamond$

\begin{claim1}\label{K4XiYihomset}Let $S$ be the	vertex-set of a component of $G[Y_i]$, and let $R$ be the vertex-set of a component of $G[X_i]$. Suppose  that there is a vertex, say $z\in Z$, such that $\{z\}$ is anticomplete to $R$. Then $S$ is either complete to $R$ or anticomplete to $R$.  Moreover if $S$ is the vertex-set of a big-component of $Y_i$, and if $R$ is not anticomplete to $S$, then $G[R]$ is $P_3$-free.  \end{claim1}
		{\it Proof of \ref{K4XiYihomset}}.~If there are vertices, say $a,b\in R$ and $c\in S$  such that $ab,ac\in E(G)$ and $bc\notin E(G)$, then $\{b,a,c,v_{i+1},z\}$ induces a $P_5$ (by \ref{BZancom}); so  $R$ is a homogeneous set in $G[R\cup S]$. Also, if there are vertices, say  $p,q\in S$ and $r\in R$  such that $pq,pr\in E(G)$ and $qr\notin E(G)$, then $\{q,p,r,v_i,z\}$ induces a $P_5$ (by \ref{BZancom}); so $S$ is homogeneous set in $G[R\cup S]$. This implies that $S$ is either complete to $R$ or anticomplete to $R$. This  proves the first assertion. The second assertion follows from the first assertion and from \ref{uvadjnbd}. This proves \cref{K4XiYihomset}. $\Diamond$

		\begin{claim1}\label{K4XZhomset} Suppose that $G$ is $F_1$-free, and that $Y_{i+1}\cup Y_{i-1}\neq \es$. Then the following hold: If $Q$ is a component of $G[X_i]$, then for any $z\in Z$, $\{z\}$ is  either complete to $V(Q)$ or anticomplete to $V(Q)$. Further  if there is a vertex, say $z'\in Z$, such that $\{z'\}$ is not anticomplete to $V(Q)$, then $Q$ is $P_3$-free.\end{claim1}
		{\it Proof of \ref{K4XZhomset}}.~Let $z\in Z$ and let $y\in Y_{i+1}\cup Y_{i-1}$. Suppose to the contrary that there are vertices, say $x,x'\in V(Q)$ such that $xx',xz\in E(G)$ and $x'z\notin E(G)$.  We may assume that $y\in Y_{i+1}$. Then since $\{x',x,z,v_{i+2},y\}$ does not induce a $P_5$,  $xy,x'y\in E(G)$ (by \ref{AiBi+1homset}), and  then $\{x,z,v_3,y,v_1,x',v_2\}$ induces an $F_1$, a  contradiction. This proves the first assertion. Since $V(Q)$ is complete to $\{v_i\}$, the second assertion follows from the first assertion and from \ref{uvadjnbd}. This proves \ref{K4XZhomset}. $\Diamond$

\smallskip
Next we have the following crucial and useful theorem.

\begin{theorem}\label{HVNfreeper}
		Let $G$ be a connected $(P_5, K_5-e)$-free graph with $\omega(G)\geq t+3$ where $t\geq 1$. If $G$ is $\mathbb{H}_t$-free, then either $G$ is  the complement of a bipartite graph or $G$ has a clique cut-set.
	\end{theorem}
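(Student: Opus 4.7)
The plan is to fix a maximum clique $K$ of $G$, with $|K|=\omega(G)\geq t+3\geq 4$, and derive a sharp neighborhood bound on the remaining vertices: every $v\in V(G)\setminus K$ satisfies $|N(v)\cap K|\leq 1$. Maximality of $K$ rules out $v$ being complete to $K$; if $v$ has at least $3$ neighbors and at least one non-neighbor in $K$, then three such neighbors together with one non-neighbor and $v$ induce a $K_5-e$; and if $v$ has exactly two neighbors in $K$, one picks a $(t+3)$-subclique of $K$ containing those two neighbors together with $t+1$ non-neighbors of $v$ (possible since $|K|\geq t+3$), and with $v$ added this induces $\mathbb{H}_t$. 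Accordingly I partition $A:=V(G)\setminus K$ into $A_u:=\{v:N(v)\cap K=\{u\}\}$ for $u\in K$ and $A_0:=\{v:N(v)\cap K=\emptyset\}$.

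If $A=\emptyset$ then $G=K$ is a clique, hence the complement of a bipartite graph. Otherwise I let $C_1,\ldots,C_m$ be the components of $G[A]$ and set $K(C_i):=\{u\in K: u$ has a neighbor in $C_i\}$. Connectivity of $G$ forces each $K(C_i)$ to be a nonempty subclique of $K$. The argument then splits into three cases: (i) if some $K(C_i)\subsetneq K$, then by the one-neighbor bound, removing $K(C_i)$ disconnects $C_i$ from the nonempty set $K\setminus K(C_i)$, so $K(C_i)$ is the desired clique cut-set; (ii) if $K(C_i)=K$ for every $i$ and $m\geq 2$, then $K$ itself is a clique cut-set, since removing $K$ leaves $G[A]$ with at least two components; (iii) the only remaining case is $m=1$ and $K(C_1)=K$, i.e., $G[A]$ is connected and $A_u\neq\emptyset$ for every $u\in K$. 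In this last case I will show $\alpha(G)\leq 2$, so that $G$ is the complement of a bipartite graph.

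For case (iii), if $A$ were not a clique then there would exist non-adjacent $a,b\in A$, and the connectedness of $G[A]$ would yield an induced $P_3$ of the form $a\mbox{-}p\mbox{-}b$ in $G[A]$. The plan is to extend this $P_3$ to an induced $P_5$ in $G$ by prepending and/or appending suitable vertices of $K$, exploiting $|K|\geq 4$ together with the one-neighbor bound from Step~1 to avoid chords. For instance, when $a\in A_{u_a}$ and $b\in A_{u_b}$ with $u_a\neq u_b$, choose $u''\in K\setminus\{u_a,u_b\}$; then according to whether $p$'s unique $K$-neighbor (if any) equals $u_a$, one of the two sequences $u''\mbox{-}u_a\mbox{-}a\mbox{-}p\mbox{-}b$ or $a\mbox{-}p\mbox{-}b\mbox{-}u_b\mbox{-}u''$ is an induced $P_5$, contradicting $P_5$-freeness. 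The main obstacle will be completing this case analysis uniformly: one has to handle every configuration of the $K$-neighborhood types of $a$, $b$, and $p$ (including the sub-cases when $a$ or $b$ lies in $A_0$, or when $a,b$ share a $K$-neighbor) and verify in each that a chordless $P_5$-extension of $a\mbox{-}p\mbox{-}b$ is available in $G$. Here the hypothesis $A_u\neq\emptyset$ for every $u\in K$ together with $|K|\geq t+3\geq 4$ provides enough spare $K$- and $A$-vertices to carry out the extension in every configuration.
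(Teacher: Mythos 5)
Your Step~1 (every vertex outside a maximum clique $K$ has at most one neighbour in $K$, via maximality, $K_5-e$ for three or more neighbours, and $\mathbb{H}_t$ for exactly two) is correct, and Cases~(i) and~(ii) of your trichotomy do produce clique cut-sets. The genuine gap is Case~(iii), which is the heart of the theorem and is left as an unexecuted ``case analysis''. Two problems. First, a logical slip: $\alpha(G)\leq 2$ does \emph{not} imply that $G$ is the complement of a bipartite graph (consider $C_5$); what you actually need, and what your $P_3$-argument is aiming at, is that $A=V(G)\setminus K$ is a clique, so that $V(G)$ splits into the two cliques $K$ and $A$. Second, and more seriously, the announced mechanism --- extend an induced $P_3$ $a$-$p$-$b$ of $G[A]$ to an induced $P_5$ by prepending or appending vertices of $K$ --- provably fails in some configurations. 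For example, if $a,p,b$ all lie in the same class $A_u$, then $u$ is adjacent to all three and every other vertex of $K$ is adjacent to none of them, so no vertex of $K$ can extend the path on either side; likewise, when $a\in A_u$, $c\in A_w$ are nonadjacent and some $d\in A_{u''}$ is adjacent to both, one only obtains a $C_5$ ($u$-$a$-$d$-$c$-$w$-$u$), which is not forbidden. Closing these configurations requires bringing in further vertices of $A$, detecting $K_5-e$'s via observation~(O1)/(O2)-type arguments, and re-invoking the no-clique-cut-set hypothesis --- a substantially longer argument than the one sketched, and one you have not supplied. As written, the proof of the only case that yields the ``complement of bipartite'' outcome is missing.

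For comparison, the paper avoids this case analysis entirely: it fixes only a triangle $C$ inside a $K_{t+3}$, partitions $V(G)$ into $X$ (one neighbour in $C$), $Y$ (two neighbours), $Z$ (three), $L$ (none), uses $\mathbb{H}_t$-freeness to show $Y=\emptyset$ and $X$ anticomplete to $Z$, then uses the absence of clique cut-sets to force $L\neq\emptyset$, each $X_i\neq\emptyset$, and completeness among $X_1,X_2,X_3,L$; the fact that four pairwise complete nonempty sets in a $(K_5-e)$-free graph form a clique (observation (O3)) then finishes in one line. If you want to salvage your route, the analogous move would be to first prove that the classes $A_{u}$ ($u\in K$) and $A_0$ are pairwise complete and then apply the (O3)-type observation, rather than attacking arbitrary induced $P_3$'s of $G[A]$ directly.
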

	\begin{proof}
		Let $G$ be a connected $(P_5,K_5-e)$-free graph which  has no clique cut-set. We show that $G$ is the complement of a bipartite graph. We may assume that $G$ is not a complete graph.  Since $\omega(G)\geq t+3$,  there are vertices, say $v_1,v_2,v_3, \ldots, v_{t+3}$ in $V(G)$ such that $\{v_1,v_2,v_3, \ldots, v_{t+3}\}$ induces a $K_{t+3}$, say $K$. Let $C := \{v_1,v_2,v_3\}$. Then, with respect to $C$, we define the sets $X,Y,Z$ and $L$ as above, and we use the properties  \ref{Zclq}-- \ref{Atcorel}. Note that $\{v_4, \ldots, v_{t+3}\}\subseteq Z$. Since $t\geq 1$,  $Z\neq \es$.    Moreover the following hold.
\begin{enumerate}[label=($\roman*$)]\itemsep=0pt
\item For any $y\in Y,$ since $K\cup \{y\}$ does not induce an $\mathbb{H}_t$ (by \ref{BZancom}), we have $Y=\es$.
\item If there  are adjacent vertices, say $x\in X$ and $z\in Z$, then  $\{x\}$ is anticomplete to $Z\sm \{z\}$  (by \ref{aZcard}), and then $K\cup \{x\}$ induces an $\mathbb{H}_t$; so  $X$ is anticomplete to $Z$.

\item By $(i)$ and $(ii)$, since $C$ is  not a clique cut-set separating $Z$ and $X$, we have $L\neq \es$.
\item For each $i\in \{1,2,3\}$, since $Z\cup \{v_{i+1}, v_{i-1}\}$ is not a clique cut-set separating $\{v_i\}$ and $L$ (by \ref{Zclq} and $(iii)$), we have $X_i\neq \es$, for each $i\in \{1,2,3\}$.
    \item Since $G$ is connected, and since  $Z$ is not a clique cut-set separating $\{v_1\}$ and the vertex-set of a component of $G[L]$ (by \ref{Zclq} and $(iii)$), the vertex-set of each component of $G[L]$ is not anticomplete to $X$. So   $X$ is complete to $L$, and $X_i$ is complete to $X_{i+1}$, for each $i\in \{1,2,3\}$ (by \ref{AiBi+1homset} and \ref{Atcorel}).
\end{enumerate}
   Now  from $(iii)$, $(iv)$ and $(v)$,  since $G[X_1\cup X_2\cup X_3\cup L]$ does not contain a $K_5-e$, it follows from \ref{4setsclique} that $X\cup L$ is a clique. Also $C\cup Z$ is a clique (by \ref{Zclq}). Thus from $(i)$, we conclude that $G$ is the complement of a bipartite graph. This proves \cref{HVNfreeper}.
	\end{proof}

\subsection{$(P_5, K_5-e)$-free graphs that contain one of $F_1$, $F_2$ or $F_3$}\label{GcontF123}

\subsubsection{$(P_5, K_5-e)$-free graphs that contain an $F_1$}\label{GcontF1}

Let $G$ be a connected ($P_5,K_5-e$)-free   graph which  has no clique cut-set. Suppose that $G$ contains an $F_1$ with vertices and edges as shown in Figure~\ref{fig-F123}. Let $C:=\{v_1,v_2,v_3\}$. Then, with respect to $C$, we define the sets $X$, $Y$, $L$ and $Z$ as in Section~\ref{genprop}, and we use the properties in Section~\ref{genprop}.  Clearly  $x_1\in X_1$, $y_2\in Y_2$, $y_3\in Y_3$ and $z\in Z$  so that $X_1$, $Y_2$, $Y_3$ and $Z$ are nonempty. Recall that $C\cup Z$ is a clique (by \ref{Zclq}), and that $Y$ is anticomplete to $Z$ (by \ref{BZancom}). Moreover the graph $G$ has some more properties which we give in three lemmas below.
\begin{lemma}\label{F1-prop} The following hold:

\vspace{-0.2cm}
\begin{lemmalist}\itemsep=0pt
\item\label{Y2Y3card} $Y_2=\{y_2\}$. Likewise, $Y_3=\{y_3\}$.
\item \label{a1b2b3Y1ancom} 	  $Y_1$ is anticomplete to $\{x_1\}\cup Y_2\cup Y_3$.
\item \label{a1Tancom}
 $L$ is anticomplete to $\{x_1\}\cup X_2\cup X_3$.
 \item \label{G1A2b2b3adj}
  Every vertex in $X_2\cup X_3$ has a neighbor in $\{y_2, y_3\}$.
  \item \label{A2Zhomset}
	  For $j\in \{2,3\}$, $\{z\}$ is either complete to $X_j$ or  anticomplete to $X_j$.
\end{lemmalist}
\end{lemma}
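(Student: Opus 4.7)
\medskip
\noindent\textbf{Proof proposal.} The plan is to handle the five items one at a time, using the same template: for each alleged ``extra'' vertex that violates the claimed structure, place it together with a carefully chosen subset of the $F_1$-vertices $\{v_1,v_2,v_3,x_1,y_2,y_3,z\}$ and invoke the general properties \ref{Zclq}--\ref{K4XZhomset} (together with the fact that $Y$ is anticomplete to $Z$) to extract either an induced $P_5$ or an induced $K_5-e$. The five items are essentially independent, so I would prove them in the order listed, and would freely use (i) while proving (ii)--(v), (ii)--(iii) while proving (iv), and all of (i)--(iv) while proving (v).

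For (i), assume there is $y_2'\in Y_2\setminus\{y_2\}$. Since $G[Y_2]$ is $P_3$-free (by \ref{BZancom}), the two vertices $y_2,y_2'$ are either adjacent or not. If $y_2y_2'\in E(G)$, then $\{y_2,y_2',v_1,v_3\}$ induces a $K_4$ and I would add one of $y_3$ or $z$ (whose adjacencies to $\{v_1,v_3\}$ are forced) to yield a $K_5-e$; if $y_2y_2'\notin E(G)$, then I would combine $y_2,y_2'$ with $v_2$ and one of $x_1,z$ to build a $P_5$ through $v_1$. The same template handles the ``$Y_3=\{y_3\}$'' part by symmetry. For (ii), a putative neighbor $y_1\in Y_1$ of $x_1$ would, together with $v_2,v_3$ and either $y_2$ or $z$, produce either a $P_5$ or a $K_5-e$ using the fact that $y_1$ is complete to $\{v_2,v_3\}$ while $x_1$ is not; a neighbor of $y_1$ inside $Y_2\cup Y_3$ would contradict the big-component version of \ref{BiBi+1bigcomancom} (since $Z\ne\varnothing$). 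For (iii), the key input is \ref{Atcorel}: a vertex $t\in L$ with a neighbor in $X_2\cup X_3$ would force $X_2\cup X_3$ to be complete to $\{t\}\cup X_1$, and feeding $t$ (together with $v_2,v_3,y_2,z$ say) into the $F_1$-configuration produces a $P_5$; a vertex $t\in L$ adjacent to $x_1$ is handled symmetrically, using that $\{t\}$ would have to be complete to $X_2\cup X_3$ and that $y_2,y_3$ force a forbidden subgraph.

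For (iv), take $x\in X_2\cup X_3$ with $\{x\}$ anticomplete to $\{y_2,y_3\}$; by symmetry say $x\in X_2$. The starting point is $\{x,v_2,v_1,y_3,v_3\}$ or $\{x,v_2,v_1,y_2,\cdot\}$: since $x$ misses $y_2$ and $y_3$, tracing the possible extensions to a $P_5$ (using $z$ or $x_1$ and the $Y$-to-$Z$ anticompleteness of \ref{BZancom}) or to a $K_5-e$ (using $z,v_1,v_2,v_3$) quickly closes out. For (v), the first half is a direct application of \ref{K4XZhomset}: since $Y_2\cup Y_3$ contains $y_2,y_3$, the hypothesis $Y_{j+1}\cup Y_{j-1}\ne\varnothing$ holds, so for every component $Q$ of $G[X_j]$ the vertex $z$ is either complete or anticomplete to $V(Q)$. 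The remaining task, which is the place I expect the most friction, is to upgrade from components to all of $X_j$: if $Q_1,Q_2$ are components of $G[X_j]$ with $Q_1$ complete to $\{z\}$ and $Q_2$ anticomplete to $\{z\}$, I would pick vertices $q_1\in V(Q_1)$, $q_2\in V(Q_2)$, and together with $v_j,v_{j+1},v_{j-1}$ and $y_2$ or $y_3$ (chosen so that the vertex of $\{y_2,y_3\}$ lies in $Y_{j+1}$ or $Y_{j-1}$) locate an induced $P_5$ going $q_2$--$v_j$--$q_1$--$z$--$\cdot$ or similar; the fact that $q_1q_2\notin E(G)$ (distinct components) and that $Y$ is anticomplete to $Z$ make the extension essentially unique.

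The main obstacle is the bookkeeping in (v): unlike (i)--(iv), which each pit a single candidate vertex against a fixed clique-like configuration, (v) requires tracking both how components of $G[X_j]$ attach to $z$ and how they attach to $\{y_2,y_3\}$, and one has to use (iv) (every vertex of $X_j$ has a neighbor in $\{y_2,y_3\}$) together with $Y\cap Z=\varnothing$ to forbid the mixed case. Once this is set up, the contradicting $P_5$ drops out, completing the lemma.
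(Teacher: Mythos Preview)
Your plan for (iii) and (iv) is essentially what the paper does and would go through with routine tightening. There are, however, two genuine gaps.

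In (v) you invoke \ref{K4XZhomset} to get that $z$ is complete or anticomplete to each component of $G[X_j]$. But \ref{K4XZhomset} is stated under the hypothesis that $G$ is $F_1$-free, and in this subsection $G$ \emph{contains} an $F_1$; so \ref{K4XZhomset} is simply unavailable, and your component-by-component reduction has no first step. The paper does not go through components at all: it picks any $p,q\in X_2$ with $pz\in E(G)$ and $qz\notin E(G)$, forces $qx_1,qy_2\in E(G)$ (else $\{x_1,y_2,v_3,v_2,q\}$ or $\{y_2,x_1,q,v_2,z\}$ is a $P_5$), then $qy_3\notin E(G)$ (else $\{v_1,x_1,y_2,y_3,q\}$ is a $K_5-e$), then shows one of $pq,px_1$ must be an edge (else $\{q,x_1,v_1,z,p\}$ is a $P_5$), and in either branch obtains an induced $P_5$ through $x_1,p,z,v_3$ together with one of $y_3,q$. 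No homogeneity of components is needed or used.

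In (ii) you say that an edge from $p\in Y_1$ into $Y_2\cup Y_3$ contradicts \ref{BiBi+1bigcomancom}. But \ref{BiBi+1bigcomancom} only concerns big-components, and after (i) both $Y_2$ and $Y_3$ are singletons, while $p$ need not lie in a big-component of $Y_1$; so \ref{BiBi+1bigcomancom} says nothing. The paper argues directly on the clique $\{v_1,x_1,y_2,y_3\}$: if $p$ were complete to $\{x_1,y_2,y_3\}$ one would get a $K_5-e$, so $p$ misses at least one of them, and then one finds a $P_5$ using $v_2$ and $z$ in every case. (Your adjacent subcase in (i) has a related glitch: adding $z$ to the $K_4=\{y_2,y_2',v_1,v_3\}$ produces \emph{two} non-edges, not a $K_5-e$; the correct move there \emph{is} \ref{BiBi+1bigcomancom}, since $y_2y_2'\in E(G)$ would place $y_2$ in a big-component of $Y_2$ and force $y_2y_3\notin E(G)$, contradicting the $F_1$.)
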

\begin{proof} $(i)$:~Suppose to the contrary that there is a vertex in $Y_2\sm \{y_2\}$, say $p$.  By \ref{BiBi+1bigcomancom}, $py_2\notin E(G)$. Then since $\{x_1,y_3,v_2,v_3,p\}$ or $\{x_1,y_3,p,v_3,z\}$ does not induce  a $P_5$  (by \ref{BZancom}), $px_1\in E(G)$ and $py_3\in E(G)$. Then $\{x_1,y_3,v_1,p,y_2\}$ induces a $K_5-e$, a contradiction. So \cref{Y2Y3card} holds. $\diamond$

\smallskip
\no{$(ii)$}:~Suppose to the contrary that there is a vertex in $Y_1$, say $p$, which has a neighbor in $\{x_1,y_2,y_3\}$ (by \cref{Y2Y3card}). Since $\{v_1,x_1,y_2,y_3,p\}$ does not induce a $K_5-e$, $p$ has a nonneighbor in $\{x_1,y_2,y_3\}$.  Now  if  $px_1\in E(G)$, then  we may assume (up to symmetry) that $py_2\notin E(G)$, and then $\{y_2,x_1,p,v_2,z\}$ induces a $P_5$, a contradiction; so  $px_1\notin E(G)$.  Then  we may assume (up to symmetry) that $py_2\in E(G)$, and then $\{x_1,y_2,p,v_2,z\}$ induces a $P_5$, a contradiction. So \cref{a1b2b3Y1ancom} holds. $\diamond$

\smallskip
\no{$(iii)$}:~If there is a vertex, say $t\in L$, such that $tx_1\in E(G)$, then $\{x_1,y_2,y_3,t,v_1\}$ induces a $K_5-e$ (by \ref{AiBi+1homset}); so $L$ is anticomplete to $\{x_1\}$.  This implies that $L$ is anticomplete to $X_2\cup X_3$ (by \ref{Atcorel}:(a)).
This proves \cref{a1Tancom}. $\diamond$

\smallskip
\no{$(iv)$}:~Suppose to the contrary that there is a vertex in $X_2\cup X_3$, say $p$, which is anticomplete to $\{y_2,y_3\}$. We may assume, up to symmetry, that  $p\in X_2$. Now since $\{x_1,y_2,v_3,v_2,p\}$ does not induce a $P_5$,   $px_1\in E(G)$, and then  one of $\{y_3,x_1,p,z,v_3\}$ or $\{p,x_1,y_2,v_3,z\}$ induces a $P_5$, a contradiction. So \cref{G1A2b2b3adj} holds. $\diamond$

\smallskip
\no{$(v)$}:~We prove the assertion for $j = 2$. Suppose to the contrary that there are vertices, say $p, q\in X_2$ such that $pz\in E(G)$ and $qz \notin E(G)$. Since  one of $\{x_1,y_2,v_3,v_2,q\}$ or $\{y_2,x_1,q,v_2,z\}$ does not induce a $P_5$, $qx_1,qy_2\in E(G)$. Then since $\{v_1,x_1,y_2,y_3, q\}$ does not induce a $K_5-e$,  $qy_3\notin E(G)$. Then since $\{q,x_1,v_1,z,p\}$ does not induce a $P_5$, either $pq\in E(G)$ or $px_1\in E(G)$. Now  if $pq\in E(G)$, then  $py_3\notin E(G)$ and $px_1\in E(G)$ (by \ref{AiBi+1homset}), and then $\{y_3,x_1,p,z,v_3\}$ induces a $P_5$, a contradiction. So, we may assume that $pq\notin E(G)$, and hence $px_1\in E(G)$. Then $\{q,x_1,p,z,v_3\}$ induces a $P_5$, a contradiction. So \cref{A2Zhomset} holds.
\end{proof}

\begin{lemma}\label{A1clqX1Y2Y3com}
 The set $X_1\cup Y_2\cup Y_3$ is a clique.
\end{lemma}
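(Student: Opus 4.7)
The plan is to reduce the lemma to showing $X_1=\{x_1\}$. Reading off the labeled $F_1$ from Figure~\ref{fig-F123}, the edges $x_1y_2$, $x_1y_3$, and $y_2y_3$ are already present in $G$, so $\{v_1,x_1,y_2,y_3\}$ induces a $K_4$ in $G$; combined with \cref{Y2Y3card} this accounts for every edge of $X_1\cup Y_2\cup Y_3$ not incident to a hypothetical extra vertex of $X_1$.

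I would then suppose, for contradiction, that there is $p\in X_1\setminus\{x_1\}$. Since $pv_1\in E(G)$ and $\{v_1,x_1,y_2,y_3\}$ is a $K_4$, applying $(K_5-e)$-freeness to $\{p,v_1,x_1,y_2,y_3\}$ forces $p$ to have at most one neighbor in $\{x_1,y_2,y_3\}$. By the $y_2\leftrightarrow y_3$ symmetry I may assume $p\not\sim y_2$ and split into sub-cases according as $p\sim y_3$, $p\sim x_1$, or $p$ is anticomplete to $\{x_1,y_2,y_3\}$.

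In each sub-case the strategy is to produce an induced $P_5$. The prototype is the path $p-z^*-v_3-y_2-y_3$, which works whenever $p\not\sim y_2,y_3$ and $pz^*\in E(G)$ for some $z^*\in Z$: the consecutive edges are forced by hypothesis or by the defining adjacencies of $Z$, $Y_2$, and the edge $y_2y_3$ of $F_1$, while every chord is excluded because $X_1$ is anticomplete to $\{v_2,v_3\}$, $y_3\in Y_3$ is non-adjacent to $v_3$, and $Y$ is anticomplete to $Z$ by \ref{BZancom}. The mirror $p-z^*-v_2-y_3-y_2$ and the $x_1$-variant $p-x_1-y_3-v_2-z^*$ handle the remaining sub-cases symmetrically. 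When $p$ has no neighbor in $Z$, I would use the no-clique-cut-set hypothesis together with \ref{aZcard} to find a neighbor $u$ of $p$ outside $N[v_1]$, so $u\in X_2\cup X_3\cup Y_1\cup L$; then \cref{G1A2b2b3adj} (for $u\in X_2\cup X_3$), \cref{a1b2b3Y1ancom} (for $u\in Y_1$), and the homogeneity result \ref{AiBi+1homset} (for $u\in L$) supply the adjacencies needed to substitute $u$ for $z^*$ in an analogous induced $P_5$ such as $p-u-y_2-v_3-z$.

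The main obstacle is a residual sub-case in which the natural 5-vertex candidate spans an induced $C_5$ rather than a $P_5$; this occurs, for instance, when $p\sim y_3$ and $pz\in E(G)$, so that $\{p,z,v_2,y_3,y_2\}$ induces a $C_5$. To escape it I would apply \ref{aZcard} to pin down $z$ as the unique $Z$-neighbor of $p$, then use \cref{A2Zhomset} together with \cref{G1A2b2b3adj} to select a vertex of $X_2\cup X_3$ that is adjacent to exactly one of $y_2,y_3$ and non-adjacent to $z$; this vertex plugs into a fresh induced $P_5$ yielding the required contradiction. Once a contradiction is obtained in every sub-case we have $X_1=\{x_1\}$, whence $X_1\cup Y_2\cup Y_3=\{x_1,y_2,y_3\}$ is a triangle in $G$ by the first paragraph, and the lemma follows.
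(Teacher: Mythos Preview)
Your reduction is wrong at the very first step. Applying $(K_5-e)$-freeness to $\{p,v_1,x_1,y_2,y_3\}$ rules out $p$ having \emph{exactly two} neighbors in $\{x_1,y_2,y_3\}$; it does not rule out $p$ being complete to $\{x_1,y_2,y_3\}$, since then the five vertices induce a $K_5$, which contains no induced $K_5-e$. So your case split omits the case ``$p$ is adjacent to all of $x_1,y_2,y_3$'', and this omitted case is precisely the one that survives: the lemma asserts that $X_1\cup Y_2\cup Y_3$ is a clique, not that $X_1=\{x_1\}$. In fact $X_1=\{x_1\}$ is false under the hypotheses of \cref{A1clqX1Y2Y3com}. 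A concrete witness is the $9$-vertex co-bipartite graph with cliques $\{v_1,v_2,v_3,z,z'\}$ and $\{x_1,x_1',y_2,y_3\}$ and cross-edges $v_1x_1,v_1x_1',v_1y_2,v_1y_3,v_2y_3,v_3y_2,zx_1',z'x_1$: it is connected, $(P_5,K_5-e)$-free, has no clique cut-set, contains $F_1$ on $\{v_1,v_2,v_3,x_1,y_2,y_3,z\}$, and has $|X_1|=2$. (The paper does eventually prove $X_1=\{x_1\}$, but only later as \ref{A1card}, under the extra assumption that $G$ is not the complement of a bipartite graph.)

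The paper's proof makes the correct reduction --- to $X_1$ being a clique --- via \ref{AiBi+1homset} and \cref{Y2Y3card}, and then handles the nontrivial content (a vertex $p\in X_1$ non-adjacent to $x_1$) not by exhibiting a single $P_5$ but by showing that the whole block $X_1'\cup L'$ attached to such vertices is anticomplete to the rest of $G\setminus\{v_1\}$, making $v_1$ a cut-vertex. Your sketch in the ``$p$ has no neighbor in $Z$'' sub-case is essentially groping toward this cut-vertex argument, but the one-line appeal to ``the no-clique-cut-set hypothesis together with \ref{aZcard} to find a neighbor $u$ of $p$ outside $N[v_1]$'' does not by itself produce a $P_5$; you would need the full homogeneity analysis the paper carries out.
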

\begin{proof}
By \ref{AiBi+1homset} and \cref{Y2Y3card}, it is enough to show that $X_1$ is a clique.  Suppose to the contrary that there are nonadjacent vertices in $X_1$, say $p$ and $q$. Since $\{y_2,y_3,x_1,p,q\}$ does not induce a $K_5-e$  (by \ref{AiBi+1homset}), we may assume that $px_1\notin E(G)$, and hence $X_1\sm (N(x_1)\cup \{x_1\})\neq \es$. We  let $X_1':=X_1\sm (N(x_1)\cup \{x_1\})$ and let $L':=\{t\in L \mid t \mbox{ has a neighbor in } X_1'\}$. We will show that $X_1'\cup L'$ is anticomplete to $V(G)\sm (X_1'\cup L'\cup \{v_1\})$, and thus $v_1$ is a cut-vertex in $G$. Recall that $V(G)\sm (X_1'\cup L'\cup \{v_1\})= \{v_2,v_3\}\cup (X\sm X_1')\cup Y\cup Z \cup (L\sm L')$. Let $x\in X_1'$ be  arbitrary.  Then since one of $\{x,z,v_3,y_2,x_1\}$, $\{x_1,y_2,x,z,v_2\}$ does not induce a $P_5$,    $xz\notin E(G)$. Moreover, we have the following.
\begin{enumerate}[leftmargin=0.7cm, label={($\arabic*$)}] \itemsep=0pt

\item \textit{$\{x\}$ is anticomplete to $Y\cup Z$}:~If $xy_2\in E(G)$, then    $\{y_2,y_3,v_1,x,x_1\}$ induces a $K_5-e$ or  $\{x,y_2,y_3,v_2,z\}$ induces a $P_5$; so $xy_2\notin E(G)$. Likewise,   $xy_3\notin E(G)$. Hence for any $w\in Y_1\cup Z$, since   $\{x,w,v_2,y_3,y_2\}$ does not induce a $P_5$ (by \cref{a1b2b3Y1ancom}), $\{x\}$ is anticomplete to $Y_1\cup Z$.  So  $\{x\}$ is anticomplete to $Y\cup Z$ (by \cref{Y2Y3card}).

\item \textit{$\{x\}$ is anticomplete to $X_2\cup X_3$}:~Suppose that $x$ has a neighbor in $X_2$, say $a_2$. Since $\{x,a_2,v_2,v_3,y_2\}$ does not induce a $P_5$,   $a_2y_2\in E(G)$.  If $a_2z\notin E(G)$, then $\{x,a_2,y_2,v_3,z\}$ induces a $P_5$; so we may assume that $a_2z\in E(G)$. Now  since $\{v_1,x_1,y_2,y_3,a_2\}$ does not induce a $K_5-e$ or  $\{x_1,y_3,a_2,z,v_3\}$ does not induce a $P_5$, we have $x_1a_2,a_2y_3\notin E(G)$, and then $\{x,a_2,v_2,y_3,x_1\}$ induces a $P_5$; so $\{x\}$ is anticomplete to $X_2$. Likewise, $\{x\}$ is anticomplete to $X_3$.

    \item Since $y_2x_1\in E(G)$, it follows from \ref{AiBi+1homset} that $\{y_2\}$ is complete to $X_1\sm X_1'$. So it follows from $(1)$ and  \ref{AiBi+1homset} that $X_1'$ is anticomplete to $X_1\sm X_1'$. Further   $X_1'\cup L'$ is anticomplete to $L\sm L'$ (by the definition of $L'$ and by \ref{AiBi+1homset}).

\item  By \ref{AiBi+1homset} and $(1)$, $L'$ is anticomplete to $Y_2\cup Y_3$. So $L'$ is anticomplete to $(X_1\cap N(x_1))\cup \{x_1\}$ (by \ref{AiBi+1homset}). Hence $X_2\cup X_3=\es$ (by \ref{Atcorel}:(a)). If $t\in L'$ has a neighbor in $Y_1\cup Z$, say $w$, then for any  neighbor of $t$ in $X_1'$, say $a'$, we see that $\{a',t,w,v_3,y_2\}$ induces a $P_5$ (by \cref{a1b2b3Y1ancom}); so $L'$ is anticomplete to $Y_1\cup Z$.
\end{enumerate}
Now since $x\in X_1'$  is arbitrary, from above arguments, we conclude that $v_1$ is a cut-vertex of $G$ (by \cref{a1Tancom}), a contradiction. This proves \cref{A1clqX1Y2Y3com}.
\end{proof}

\begin{lemma}
 \label{Temp}
	 The set $L$ is  an empty set.
 \end{lemma}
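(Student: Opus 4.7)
The plan is to argue by contradiction: suppose $L\ne\es$ and pick any $t\in L$. By \cref{a1Tancom}, $t$ is anticomplete to $\{x_1\}\cup X_2\cup X_3$, and by definition $t$ is nonadjacent to $C$. Hence any neighbor of $t$ in $V(G)\sm L$ must lie in $(X_1\sm\{x_1\})\cup Y\cup Z$, and I will rule out each of the four possibilities in turn. Connectedness of $G$ (together with $C\ne\es$) then gives the contradiction, since $t$ will end up having no neighbor outside $L$.

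First I would show that $t$ is anticomplete to $\{y_2,y_3\}$: if say $ty_2\in E(G)$, then using \cref{A1clqX1Y2Y3com} (so that $y_2$ is adjacent to $x_1,y_3,v_1$), the $F_1$-witness $z$ (for which $zy_2\notin E(G)$ by \ref{BZancom}), and \cref{G1A2b2b3adj}--\cref{A2Zhomset} to control $X_2\cup X_3$, I expect to force an induced $P_5$ of the form $t-y_2-v_3-z-\star$ or $v_2-x_2-x_1-y_2-t$. Next I would show $t$ is anticomplete to $Y_1$: any edge $ty$ with $y\in Y_1$, together with $z$ (which is anticomplete to $Y$), $v_3$ and $y_2$, should produce an induced $P_5$ by \cref{a1b2b3Y1ancom}. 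Then I would show $t$ is anticomplete to $Z$: if $tz'\in E(G)$, use \ref{lZcard} to bound $|N(t)\cap Z|$, and then try to extend the path $t-z'-v_i-\cdots$ into $X_i$ via \ref{Atcorel} (the starting edge cannot be "absorbed" because of \cref{a1Tancom}). Finally I would show $t$ is anticomplete to $X_1\sm\{x_1\}$: for $x'\in X_1\sm\{x_1\}$ adjacent to $t$, \cref{A1clqX1Y2Y3com} forces $x'$ adjacent to $y_2,y_3,v_1$, and one then expects the induced $P_5$ \ $t-x'-y_2-v_3-z$, using that the previous steps already exclude $tz$ and, via \cref{a1Tancom}/\ref{AiBi+1homset}, $x'z$.

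Putting all four steps together, $t$ has no neighbor outside $L$, so the component of $G[L]$ containing $t$ is a connected component of $G$, contradicting connectedness. The main obstacle I expect is the last case, $t$ anticomplete to $X_1\sm\{x_1\}$: any such $x'$ has the same neighborhood pattern in $C\cup\{y_2,y_3\}$ as $x_1$ does, so one cannot simply re-use the proof of \cref{a1Tancom} with $x'$ in place of $x_1$ (that proof explicitly relied on $x_1$ being anticomplete to $L$). The needed argument must instead piece together the $F_1$-witness $z$, the anticompleteness facts already established in the earlier steps, and the homogeneity principle \ref{AiBi+1homset} to get the correct nonedges $tz,x'z\notin E(G)$ that make $t-x'-y_2-v_3-z$ induced; handling possible $X_1$-adjacencies between $x'$ and $x_1$ in parallel with ruling out stray edges to $X_2\cup X_3\cup Z$ is the delicate part.
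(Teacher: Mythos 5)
There is a genuine gap, and it is fatal to the overall strategy rather than to one delicate step. Your plan is to rule out every possible neighbour of an arbitrary $t\in L$ outside $L$ by exhibiting an induced $P_5$ or $K_5-e$, and then to contradict mere \emph{connectedness}. But \cref{Temp} is false for connected $(P_5,K_5-e)$-free graphs containing an $F_1$ in general: take $F_1$ itself and add one vertex $t$ with $N(t)=\{y_2,y_3\}$. This graph is connected, $(P_5,K_5-e)$-free (the only induced $P_4$'s through $t$, such as $t\hbox{-}y_2\hbox{-}v_3\hbox{-}z$, do not extend, and $\{v_1,x_1,y_2,y_3,t\}$ has two missing edges), and has $L=\{t\}\neq\es$. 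What fails for it is the \emph{no clique cut-set} hypothesis: $C$ separates $z$ from $\{x_1,y_2,y_3,t\}$. Consequently your very first step --- deriving a forbidden induced subgraph directly from $ty_2\in E(G)$ --- cannot succeed, and since your second step ($t$ anticomplete to $Y_1$, via the would-be $P_5$ on $\{y_2,y_3,v_2,y,t\}$) needs $ty_2,ty_3\notin E(G)$ to be induced, the whole chain collapses. Any correct proof must invoke the standing assumption that $G$ has no clique cut-set somewhere other than in a final connectedness count.

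That is exactly what the paper does. If $L$ is anticomplete to $\{y_2,y_3\}$, it deduces (via \cref{a1Tancom}, \cref{A1clqX1Y2Y3com} and \ref{AiBi+1homset}) that $L$ is anticomplete to $X$, and then uses that the clique $Z$ is not a cut-set separating $L$ from $C$ to force an edge between $L$ and $Y_1$, which does yield a $P_5$. Otherwise some $t\in L$ is adjacent to $y_2$ (say), and the paper shows that this forces $\{z\}$ to be anticomplete to $X\cup Y\cup L$, so that $C\cup(Z\sm\{z\})$ is a clique cut-set isolating $z$ --- a contradiction of a structural kind that no amount of local $P_5$/$K_5-e$ hunting around $t$ can replace. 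So the missing idea is the case split on whether $L$ meets $N(y_2)\cup N(y_3)$ together with the clique cut-set argument centred on $z$ rather than on $t$.
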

\begin{proof}Suppose to the contrary that $L\neq \es$. First we assume that $N(y_2)\cap L=\es$ and $N(y_3)\cap L=\es$. Then $L$ is anticomplete  to $X$ (by  \cref{a1Tancom},  \cref{A1clqX1Y2Y3com}, and by \ref{AiBi+1homset}). Since $Z$ is a clique (by \ref{Zclq}) and since $Z$ is not a clique cut-set of $G$ separating $L$ and $C$ (by \cref{Y2Y3card}), $L$ is not anticomplete to $Y_1$, and so there are adjacent vertices, say $y\in Y_1$ and $t\in L$. Then $\{y_2,y_3,v_2,y,t\}$ induces a $P_5$ (by \cref{a1b2b3Y1ancom}), a contradiction. So  we may assume, up to symmetry, that
  $N(y_2)\cap L\neq \es$, and let $t\in N(y_2)\cap L$. Then $tx_1\notin E(G)$ (by \cref{a1Tancom}). We claim that $C\cup (Z\sm \{z\})$ is a clique cut-set of $G$ (using \ref{Zclq}) separating $\{z\}$ and $L$. It is enough prove that $\{z\}$ is anticomplete to $X\cup Y\cup L$. Clearly, $\{z\}$ is anticomplete to $Y$ (by \ref{BZancom}).
   Next we show that  $\{z\}$ is anticomplete to $L$. Suppose to the contrary that $z$ has a neighbor in $L$, say $p$. Then since $\{x_1,y_2,p,z,v_2\}$ does not induce a $P_5$ (by \cref{a1Tancom}),  $p\neq t$ and $y_2p\notin E(G)$. So $tp\notin E(G)$ (by \ref{AiBi+1homset}), and then  $\{t,y_2,v_1,z,p\}$ induces a $P_5$, a contradiction. So $\{z\}$ is anticomplete to $L$.
  Finally, we show that  $\{z\}$ is anticomplete to $X$. Suppose to the contrary that $z$ has a neighbor in $X$, say $x$. If $x\in X_1$, then $\{t,y_2,x,z,v_2\}$ induces a $P_5$  or $\{x,y_2,y_3,v_1,t\}$ induces a $K_5-e$ (by $(i)$, \ref{AiBi+1homset} and \cref{A1clqX1Y2Y3com}), a contradiction. If $x\in X_2$, then since $\{t,y_2,y_3,v_2,z\}$ does not induce a $P_5$,  we have $ty_3\in E(G)$, and then $\{t,y_3,v_1,z,x\}$ or   $\{t,y_3,x,z,v_3\}$    induces a $P_5$ (by \cref{a1Tancom}), a contradiction. We get a similar contradiction  when $x\in X_3$. These contradictions show that $\{z\}$ is anticomplete to $X$.
 Thus, $C\cup (Z\sm \{z\})$ is a clique cut-set of $G$ separating $\{z\}$ and the rest of the vertices, a contradiction. This proves \cref{Temp}.
\end{proof}

Now we prove the main theorem of this section, and is given below.

\begin{theorem}\label{cont-HG}
		Let $G$ be a connected ($P_5,K_5-e$)-free   graph. If $G$ contains  an   $F_1$, then either $G$ is  the complement of a bipartite graph or $G$ has a clique cut-set or  $\chi(G)\leq 5$.
	\end{theorem}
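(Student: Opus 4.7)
The plan is to assume that $G$ has no clique cut-set and is not the complement of a bipartite graph, and derive $\chi(G)\leq 5$. We fix the $F_1$ and the sets $C, X, Y, Z, L$ as in the setup of Section~\ref{GcontF1}, so that all of the conclusions of Lemmas~\ref{F1-prop},~\ref{A1clqX1Y2Y3com} and~\ref{Temp} are available. In particular $Y_2=\{y_2\}$, $Y_3=\{y_3\}$, $L=\emptyset$, $Y$ is anticomplete to $Z$, $Y_1$ is anticomplete to $\{x_1,y_2,y_3\}$, every vertex of $X_2\cup X_3$ has a neighbor in $\{y_2,y_3\}$, and both $X_1\cup\{y_2,y_3\}$ and $C\cup Z$ are cliques.

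The first step is to force the skeleton to be as small as possible. Since $\{v_1\}$ is complete to the clique $X_1\cup\{y_2,y_3\}$, the set $\{v_1\}\cup X_1\cup\{y_2,y_3\}$ is a clique of size $|X_1|+3$; as $G$ is $(K_5-e)$-free it cannot contain $K_5$, so $|X_1|\leq 1$, hence $X_1=\{x_1\}$. In exactly the same way the clique $C\cup Z$ gives $|Z|\leq 1$, hence $Z=\{z\}$. In particular $\omega(G)=4$, so the only thing to prove is that five colors suffice. The second step is to pin down the remaining pieces $X_2,X_3,Y_1$: \ref{BZancom} together with the earlier triangle argument (using $z$) shows that $G[Y_1]$ is $P_3$-free and $K_3$-free, hence a disjoint union of $K_1$'s and $K_2$'s; similarly, using \cref{G1A2b2b3adj}, \cref{A2Zhomset} and \cref{K4XZhomset}, each component of $G[X_j]$ ($j\in\{2,3\}$) is $P_3$-free whenever $\{z\}$ is complete to it, and the two sides $X_2,X_3$ are linked to $\{y_2,y_3\}$ in a restricted bipartite-like fashion. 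The obvious backbone coloring assigns color~1 to $v_1$, color~2 to $\{v_2,y_2\}$, color~3 to $\{v_3,y_3\}$, color~4 to $\{z,x_1\}$ (these are non-edges by the definitions of $Y_i,Z,X_i$), and reserves color~5 for the remainder.

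The third step is to extend this backbone to $X_2\cup X_3\cup Y_1$ using only five colors. Since each component of $G[Y_1]$ has at most two vertices, $Y_1$ can be split between color~1 (containing $v_1$, which is anticomplete to $Y_1$) and color~5. For $X_2$, every vertex is nonadjacent to $v_1,v_3,y_3$ unless the structure says otherwise, so vertices of $X_2$ adjacent to $y_2$ can receive colors from $\{1,3,5\}$ and those adjacent only to $y_3$ can receive colors from $\{1,2,5\}$; the dichotomy \cref{A2Zhomset} tells us whether color~4 is also available. The analogous statement holds for $X_3$ by symmetry. The actual assignment is organised as a short case analysis on whether $\{z\}$ is complete or anticomplete to $X_2$ (respectively $X_3$) and whether $Y_1=\emptyset$; in each subcase the component-level freeness (from \cref{K4XZhomset}) and the clique-cut-set hypothesis rule out configurations that would need a sixth color, typically because such a configuration either produces an induced $P_5$ with $\{z,v_1,v_2,v_3,x_1,y_2,y_3\}$ or exposes $C\cup Z$ (or $\{v_1\}\cup X_1\cup\{y_2,y_3\}$) as a clique cut-set separating the offending vertices from the rest.

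The main obstacle will be the third step: the interaction between $X_2$, $X_3$, and $Y_1$ across $\{y_2,y_3\}$ is not automatic, and one must repeatedly use the $P_5$-freeness lemmas (\ref{AiBi+1homset}, \ref{K4XiYihomset}, \ref{K4XZhomset}, \ref{F2azancomLcorel}) to rule out the few obstructing patterns. The delicate point is that a vertex of $X_2$ adjacent simultaneously to $y_2$, to some vertex of $Y_1$, and to $z$ would need a fifth private color at first sight; the argument must show that in that situation either an induced $P_5$ appears or the clique $C\cup\{z\}$ (or $\{v_1,x_1,y_2,y_3\}$) is a clique cut-set, contradicting our standing assumption, so this bad configuration never occurs and five colors always suffice.
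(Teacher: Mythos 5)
There is a fatal error at the very first step of your argument. You deduce $|X_1|\leq 1$ (and $|Z|\leq 1$, and hence $\omega(G)=4$) from the claim that a $(K_5-e)$-free graph ``cannot contain $K_5$.'' This is false: the forbidden subgraphs in this paper are \emph{induced}, and $K_5-e$ is not an induced subgraph of $K_5$ (or of any larger clique). Indeed the whole paper is about $(P_5,K_5-e)$-free graphs with large clique number, e.g.\ \cref{ML-imp} concerns $\omega\geq 7$. So nothing stops $X_1\cup\{y_2,y_3,v_1\}$ or $C\cup Z$ from being a large clique, and your entire ``skeleton'' collapses. The paper's proof of $X_1=\{x_1\}$ (Claim~\ref{A1card}) is genuinely nontrivial: assuming $X_1\sm\{x_1\}\neq\es$ it shows $X_2\cup X_3=\es$ and $Y_1=\es$ and that $V(G)$ splits into the two cliques $X_1\cup Y_2\cup Y_3$ and $C\cup Z$, contradicting that $G$ is not the complement of a bipartite graph; and the correct bound on $Z$ is $|Z|\leq 2$ (Claim~\ref{G1Zcard}), not $|Z|\leq 1$, obtained again by a clique cut-set argument rather than by bounding $\omega$.

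Beyond this, two further problems would remain even if the first step were repaired. First, you invoke \cref{K4XZhomset}, which is stated under the hypothesis that $G$ is $F_1$-\emph{free}; here $G$ contains an $F_1$, so that lemma is unavailable (the paper uses its $F_1$-specific substitute, \cref{A2Zhomset}). Second, you never prove $Y_1=\es$; the paper needs this (Claim~\ref{A2A3Y1ancom}, which shows $X_2\cup X_3$ is anticomplete to $Y_1$ and then that $C$ would otherwise be a clique cut-set), whereas your plan tries to absorb $Y_1$ into colors $1$ and $5$ while also parking unspecified parts of $X_2\cup X_3$ in color $5$, with no control over edges between them. The concluding ``short case analysis'' is only a description of what a proof would have to do (``the argument must show \dots''), not an argument; the paper instead isolates the concrete stable sets $A_j,B_j$ via Claims~\ref{G1Y2A2znbdhomset} and~\ref{G1colstr} and exhibits an explicit partition into five stable sets. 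As it stands the proposal does not establish the theorem.
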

\begin{proof}
	Let $G$ be a connected ($P_5,K_5-e$)-free   graph. Suppose that $G$ contains an $F_1$ with vertices and edges as shown in Figure~\ref{fig-F123}. Let $C:=\{v_1,v_2,v_3\}$. Then, with respect to $C$, we define the sets $X$, $Y$, $L$ and $Z$ as in Section~\ref{genprop}, and we use the properties in Section~\ref{genprop}.  Clearly  $x_1\in X_1$, $y_2\in Y_2$, $y_3\in Y_3$ and $z\in Z$  so that $X_1$, $Y_2$, $Y_3$ and $Z$ are nonempty. We may assume that $G$ has no clique cut-set, and that $G$ is not the complement of a bipartite graph.  We also use \Cref{F1-prop,A1clqX1Y2Y3com,Temp}. By \cref{Temp}, $L=\es$.  Recall that $C\cup Z$ is a clique (by \ref{Zclq}), and that $Y$ is anticomplete to $Z$ (by \ref{BZancom}). We show that $\chi(G)\leq 5$   using a sequence of claims given below.

\begin{claim}\label{A1card}
			 $X_1=\{x_1\}$.
		\end{claim}
		\no{\it Proof of  \ref{A1card}}.~Suppose not, and let  $X_1\sm\{x_1\}\neq \es$, say $x_1'\in X_1\sm\{x_1\}$.
By  \cref{A1clqX1Y2Y3com}, $X_1\cup Y_2\cup Y_3$ is a clique. For any $x\in X_2\cup X_3$, since $G[\{v_1,x_1,x_1',y_2,y_3,x\}]$ does not contain a $K_5-e$ (by \cref{G1A2b2b3adj} and \ref{AiBi+1homset}), we see that $X_2\cup X_3$ is anticomplete to $X_1$. Also,  since  $(X_1\sm \{x_1\})\cup Y_2\cup Y_3\cup \{v_1\}$ is not a clique cut-set of $G$ (by \cref{a1b2b3Y1ancom}) separating $\{x_1\}$ and $Z$,   there is a vertex in $Z$, say $z'$, such that $x_1z'\in E(G)$.
			  Now, we have the following:
			\begin{enumerate}[label={($\roman*$)}]\itemsep=0pt
				\item For any $p\in X_1\sm \{x_1\}$, since $\{x_1,p,v_1,z',y_2\}$ does not induce a $K_5-e$ (by \ref{BZancom}), we have $X_1\sm \{x_1\}$ is anticomplete to $\{z'\}$. In particular, $x_1'z'\notin E(G)$.
				\item Next, we claim that $X_2\cup X_3=\es$. Suppose not, and let $q\in X_2$. Then since $\{x_1',x_1,z',v_2,q\}$ does not induce a $P_5$ (by $(i)$), $qz'\in E(G)$. Then $qz\notin E(G)$ (by  \ref{aZcard}). But then one of $\{v_3,z',q,y_3,x_1'\}$ or $\{x_1,y_2,q,v_2,z\}$ induces a $P_5$ (by \cref{G1A2b2b3adj} and $(i)$), a contradiction. So $X_2=\es$. Likewise, $X_3=\es$.
				\item Finally, we claim that $Y_1=\es$. Suppose not. Then since $\{v_2,v_3\}$ is not a clique cut-set of $Y_1$ and the rest of the vertices  (by $(ii)$ and \cref{a1b2b3Y1ancom}),   $X_1\sm \{x_1\}$ is not anticomplete to $Y_1$. So there are adjacent vertices, say $p\in X_1\sm \{x_1\}$ and $q\in Y_1$.  Then  $\{y_2,p,q,v_2,z'\}$ induces a $P_5$ (by \ref{BZancom} and $(i)$), a contradiction. So  $Y_1=\es$.
			\end{enumerate}
		Then, by above arguments, $V(G)$ can be partitioned into two  cliques, namely, $X_1\cup Y_2\cup Y_3$ and $C\cup Z$. Thus $G$ is the complement of a bipartite graph, a contradiction.  So   $X_1=\{x_1\}$.
		This proves \ref{A1card}. $\Diamond$

\begin{claim}\label{A2A3Y1ancom}
	$X\cup Y = \{x_1,y_2,y_3\} \cup X_2\cup X_3$.
\end{claim}
\no{\it Proof of \ref{A2A3Y1ancom}}.~By \cref{Y2Y3card} and \ref{A1card}, it is enough to show that $Y_1=\es$. First we show that $X_2\cup X_3$ is anticomplete to $Y_1$. Suppose to the contrary that there are adjacent vertices, say $p\in X_2\cup X_3$ and $q\in Y_1$. We may assume, up to symmetry, that $p\in X_2$. From \ref{BZancom} and \cref{a1b2b3Y1ancom}, $\{q\}$ is anticomplete to  $\{z,x_1,y_2,y_3\}$. Then since $\{y_3,v_1,v_3,q,p\}$ and $\{x_1,v_1,v_3,q,p\}$ do not induce $P_5$'s, we have $py_3,px_1\in E(G)$.  Then since $\{y_3,p,q,v_3,z\}$ does not induce a $P_5$, we have $pz\in E(G)$, and since $\{v_1,x_1,y_2,y_3,p\}$ does not induce a $K_5-e$, we have $py_2\notin E(G)$. Now  $\{q,p,z,v_1,y_2\}$ induces a $P_5$, a contradiction.  So $X_2\cup X_3$ is anticomplete to $Y_1$. Hence from \ref{BZancom},  \ref{A1card} and \cref{a1b2b3Y1ancom}, we conclude that $Y_1$ is anticomplete to $X\cup Y_2\cup Y_3\cup Z\cup L$. Now since  $\{v_2,v_3\}$ is not a clique cut-set of $G$ separating $Y_1$ and the rest of the vertices in $G$, we have $Y_1=\es$.  This proves \ref{A2A3Y1ancom}. $\Diamond$

\begin{claim}\label{G1Zcard}
		 $|Z\sm \{z\}|\leq 1$.
	\end{claim}
	\no{\it Proof of \ref{G1Zcard}}.~Suppose not. Then there is a vertex in $Z\sm \{z\}$, say $z'$, such that $x_1z'\notin E(G)$ (by \ref{aZcard}). By \ref{A2A3Y1ancom}, since $C\cup (Z\sm\{z'\})$ is not a clique cut-set of $G$ separating $\{z'\}$ and the rest of the vertices, we may assume that $z'$ has a neighbor in $X_2$, say $q$. Then $qz\notin E(G)$ (by \ref{aZcard}). Then as in the proof of \cref{A2Zhomset}, we have $qx_1,qy_2\in E(G)$, and $qy_3\notin E(G)$. Then $\{y_3,x_1,q,z',v_3\}$ induces a $P_5$ (by \ref{BZancom}), a contradiction. This proves \ref{G1Zcard}. $\Diamond$

\begin{claim}\label{G1Y2A2znbdhomset}
	 For $j\in \{2,3\}$, $G[X_j]$ is the union of $K_2$'s and $K_1$'s.
\end{claim}
	\no{\it Proof of \ref{G1Y2A2znbdhomset}}.~We prove the claim for $j = 2$.  Let $Q$ be a  component of $G[X_2]$. It is enough to show that $V(Q)$ induces a $(P_3,K_3)$-free graph. By \cref{A2Zhomset}, we have either $V(Q)$ is complete to $\{z\}$ or $V(Q)$ is anticomplete to $\{z\}$. First suppose that  $V(Q)$ is complete to $\{z\}$. Then since $V(Q)$ is complete to $\{v_2,z\}$,  by \ref{uvadjnbd}, $Q$ is $P_3$-free.
 Also, since $G[\{z,y_2,y_3\}\cup V(Q)]$ does not contain a $K_5-e$ (by  \ref{AiBi+1homset} and \cref{G1A2b2b3adj}), $Q$ is $K_3$-free, and we are done. So we may assume that $V(Q)$ is anticomplete to $\{z\}$. Then as in the proof of \cref{A2Zhomset},  we see that   $V(Q)$ is complete to $\{x_1,y_2\}$, by using \ref{AiBi+1homset}. Thus  $Q$ is   $P_3$-free (by \ref{uvadjnbd}), and   since $G[\{x_1,v_2\}\cup V(Q)]$ does not contain a $K_5-e$, $Q$ is  $K_3$-free.
	 	This proves \ref{G1Y2A2znbdhomset}. $\Diamond$

\medskip
By \ref{G1Y2A2znbdhomset}, for $j\in \{2,3\}$, we let $X_j:=A_j\cup B_j$, where $A_j$ and $B_j$ are stable sets such that $B_j$ is maximal.  Then we have the following:

\begin{claim}\label{G1colstr}
 $A_2\cup \{y_3\}$ is a stable set. Likewise, $A_3\cup \{y_2\}$ is a stable set.
\end{claim}
	\no{\it Proof of \ref{G1colstr}}.~Suppose to the contrary that there is a vertex in  $A_2$, say $p$, such that $py_3\in E(G)$. Then by our definition of $A_2$, since $B_2$ is a maximal stable set, there is a vertex in $B_2$, say $q$, such that $pq\in E(G)$. Since $\{p,q,v_2,y_3,z\}$ does not induce a $K_5-e$ (by \ref{AiBi+1homset}), we have $pz\notin E(G)$ (by \cref{A2Zhomset}). Then since $\{p,y_3,y_2,v_3,z\}$ does not induce a $P_5$, we have $py_2\in E(G)$, and then since $\{v_1,x_1,y_2,y_3,p\}$ does not induce a $K_5-e$, we have $px_1\notin E(G)$. But then $\{x_1,y_2,p,v_2,z\}$ induces a $P_5$, a contradiction. This proves \ref{G1colstr}. $\Diamond$
	
\medskip
By \ref{A2A3Y1ancom}, we conclude that   $V(G)= C\cup Z \cup A_2\cup B_2\cup A_3\cup B_3\cup \{x_1,y_2,y_3\}$.  Since $C\cup (Z\sm \{z\})$ is not a clique cut-set of $G$ separating $\{z\}$ and the rest of the vertices in $G$,    $z$ has a neighbor in $X_2\cup X_3$. So  we may assume that $\{z\}$ is complete to $X_2$ (by \cref{A2Zhomset}), and hence  $X_2$ is anticomplete to $Z\sm \{z\}$ (by \ref{aZcard}). Also $Z\sm \{z\}$ is anticomplete to $\{y_3\}$ (by \ref{BZancom}). Now  by using \ref{G1Zcard} and \ref{G1colstr}, we define the following stable sets: $S_1:=A_2\cup (Z\sm \{z\})\cup \{y_3\}$, $S_2:=B_2 \cup \{v_3\}$, $S_3:=\{x_1, z\}$, $S_4:=A_3\cup \{y_2, v_2\}$ and $S_5:=B_3 \cup \{v_1\}$. Clearly $V(G)=\cup_{j=1}^5S_j$, and hence $\chi(G)\leq 5$. This proves \cref{cont-HG}.
\end{proof}

\subsubsection{$(P_5, K_5-e, F_1)$-free graphs that contain an $F_2$}\label{GcontF2}
Let $G$ be a connected ($P_5,K_5-e, F_1$)-free   graph which has no clique cut-set. Suppose that $G$ contains an   $F_2$ with vertices and edges as shown in Figure~\ref{fig-F123}. Let $C:=\{v_1,v_2,v_3\}$. Then  with respect to $C$, we define the sets $X$, $Y$, $Z$ and $L$ as in Section~\ref{genprop}, and we use the properties in Section~\ref{genprop}.  Clearly  $y_2\in Y_2$, $y_3\in Y_3$ and $z_1,z_2\in Z$  so that $Y_2$, $Y_3$ and $Z$ are nonempty.   Recall that $C\cup Z$ is a clique (by \ref{Zclq}), and that $Y$ is anticomplete to $Z$ (by \ref{BZancom}). Moreover, the graph $G$ has some more properties which we give below in two lemmas.

\begin{lemma}\label{lemma-F2}The following hold:

\vspace{-0.2cm}
\begin{lemmalist}
\item \label{G2A1B2B3ancom}
	$X_1$ is anticomplete to $Y_2\cup Y_3$. Likewise, if $Y_1\neq \es$, then for each $i\in \{1,2,3\}$, $X_i$ is anticomplete to $Y_{i+1}\cup Y_{i-1}$.
\item \label{G2X1Z} $X_1$ is anticomplete to $Z$.
\item \label{G2xy2y3com} If a vertex in $X$ has a neighbor in $Z$, then it is adjacent to both $y_2$ and $y_3$.
\item \label{G2A1A2ancom}
	$X_1$ is anticomplete to $X_2\cup X_3$.
	Likewise, if $Y_1\neq \es$, then for each $i\in \{1,2,3\}$,  $X_i$ is anticomplete to $X_{i+1}\cup X_{i-1}$.
\item \label{G2znbdXicard}
	 For $j\in \{2,3\}$, $|z\in Z: N(z)\cap X_j\neq \es|\leq 1$.
\end{lemmalist}
\end{lemma}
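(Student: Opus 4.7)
The plan is to prove each of the five subclaims by contradiction, in sequence, exploiting the witnesses $v_1,v_2,v_3,y_2,y_3,z_1,z_2$ coming from the assumed induced $F_2$, together with the generic properties \ref{uvadjnbd}--\ref{F2azancomLcorel} from Section~\ref{genprop} and (for $F_1$-freeness) \cref{K4XZhomset}. Throughout, the two distinct $Z$-vertices $z_1,z_2$ provided by $F_2$ will be crucial, since they give us a ``spare'' $z'\in Z$ non-adjacent to any fixed $X$-vertex with a prescribed neighbor in $Z$ (by \ref{aZcard}). For (i), suppose $x_1\in X_1$ has a neighbor $y\in Y_2$ (the case $y\in Y_3$ is symmetric). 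Since $\{y\}$ is anticomplete to $Z$ by \ref{BZancom}, I expect to force an induced $P_5$ among $\{z_1,v_3,y,x_1,\ast\}$ or $\{y_3,v_2,v_1,y,x_1\}$, with the undetermined adjacencies governed by \ref{AiBi+1homset} applied to the component of $G[X_1\cup L]$ containing $x_1$; if the candidate $P_5$'s avoided, a $K_5-e$ on $\{v_1,v_3,y,x_1,z_i\}$ appears. The symmetric clause follows by relabeling.

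For (ii), suppose $x_1\in X_1$ has a neighbor $z\in Z$. By (i), $x_1$ is anticomplete to $Y_2\cup Y_3$. Since $Y_2\cup Y_3\ne\es$, \cref{K4XZhomset} forces the $X_1$-component of $x_1$ to be $P_3$-free and controls which $Z$-vertices see $x_1$; combining with \ref{BZancom} I aim to build a $P_5$ of the form $\{y_2,v_3,z,x_1,\ast\}$ or $\{y_3,v_2,z,x_1,\ast\}$, where $\ast$ is chosen from $\{z_1,z_2\}\setminus\{z\}$ (non-adjacent to $x_1$ by \ref{aZcard}). For (iii), let $x\in X_i$ have a neighbor $z\in Z$; by (ii), $i\in\{2,3\}$, and by symmetry take $i=2$. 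Suppose $x\not\sim y_3$. Then $\{y_3,v_1,z,x\}$ is an induced $P_4$ (using \ref{BZancom}), and extending at $x$ using the second $Z$-vertex $z'\in\{z_1,z_2\}\setminus\{z\}$---which is non-adjacent to $x$ by \ref{aZcard} and to $y_3$ by \ref{BZancom}---produces an induced $P_5$, contradicting $P_5$-freeness.

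For (iv), suppose $x_1\in X_1$ has a neighbor $x\in X_2$ (the case $x\in X_3$ is symmetric). From (i), (ii), $x_1$ is anticomplete to $Y_2\cup Y_3\cup Z$. By \ref{AiBi+1homset} the component of $G[X_1\cup L]$ containing $x_1$ is homogeneous in $G[X_1\cup X_2\cup Y_3\cup L]$, so $x_1$ inherits $x$'s adjacencies to $y_3$; if $x\sim y_3$ we violate (i), while if $x\not\sim y_3$ we can locate an induced $P_5$ via $\{x_1,x,v_2,z_1,\cdot\}$ or $\{y_3,v_2,x,x_1,v_1\}$ ruled out by standard checks, in either case a contradiction. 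For (v), take distinct $z,z'\in Z$ with $x\in N(z)\cap X_j$, $x'\in N(z')\cap X_j$. By \ref{aZcard} we have $x\ne x'$, $xz'\notin E(G)$, $x'z\notin E(G)$, and by (iii) both $x$ and $x'$ are adjacent to $y_2$ and $y_3$. If $y_2y_3\in E(G)$, then $\{v_1,y_2,y_3,x,x'\}$ induces $K_5$ or $K_5-e$, a contradiction; if $y_2y_3\notin E(G)$, then \ref{uvnonadjnbd} applied to $(y_2,y_3)$ forces $\{v_1,x,x'\}$ to be $K_3$-free, giving $xx'\notin E(G)$, and then $\{x,z,z',x',y_2\}$ (or an analogous quintuple) is an induced $P_5$.

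The main obstacle I expect is (iii): the only ``extra'' witness beyond the triangle and $y_2,y_3$ is the second $Z$-vertex from $F_2$, and whether the alleged $Z$-neighbor $z$ of $x$ coincides with $z_1$ or $z_2$ requires a short but careful bookkeeping with \ref{aZcard} and \ref{AiBi+1homset}. A secondary delicate point is (v), where the direct $K_5-e$ approach fails precisely when $y_2y_3\notin E(G)$; that case is exactly where \ref{uvnonadjnbd} intervenes to supply the non-edge $xx'$ needed to finish via $P_5$-freeness.
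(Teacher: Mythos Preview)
Your proposal has genuine gaps in several places, and two structural ingredients are missing throughout.

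First, you never use the edge $y_2y_3\in E(G)$, which is part of the definition of $F_2$ (indeed, you case-split on it in (v)). The paper exploits this edge repeatedly: in (ii) the $P_5$ is simply $x$-$z$-$v_3$-$y_2$-$y_3$, and in (iii) the two key $P_5$'s are $x$-$z$-$v_3$-$y_2$-$y_3$ and $y_2$-$y_3$-$x$-$z$-$z_1$ (with $z_1\in Z\setminus\{z\}$). Your proposed extensions in (ii) and (iii) by a spare $z'\in Z$ at the $x$-end fail because $z'$ is adjacent to both $z$ and every $v_i$, so $y_3$-$v_1$-$z$-$x$-$z'$ is never an induced $P_5$.

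Second, you underuse $F_1$-freeness: it is needed not only via \cref{K4XZhomset} but directly in (i) and (v). In (i), once the two $P_5$-tests force $xy_3\in E$ and $yy_3\in E$, the set $\{v_1,v_2,v_3,x,y,y_3,z_1\}$ induces an $F_1$; there is no $K_5-e$ to be found on $\{v_1,v_3,y,x_1,z_i\}$, since already $x_1v_3\notin E$ and $yz_i\notin E$. In (v), your argument breaks on both branches: when $y_2y_3\in E$ the set $\{v_1,y_2,y_3,x,x'\}$ has $v_1$ of degree~$2$ (since $v_1x,v_1x'\notin E$), so it is neither $K_5$ nor $K_5-e$; and when you force $xx'\notin E$ the set $\{x,z,z',x',y_2\}$ is a $C_5$, not a $P_5$ (because $xy_2,x'y_2\in E$ by (iii)). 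The paper instead shows $xx'\in E$ via the $P_5$-test on $\{x,y_3,x',z',v_3\}$ and then exhibits $\{z,z',v_2,v_3,x,x',y_3\}$ as an induced $F_1$.

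Finally, your homogeneity argument in (iv) is misapplied: \ref{AiBi+1homset} says that a vertex of $X_2\cup Y_3$ with a neighbor in the $X_1\cup L$-component of $x_1$ is complete to that component; it does \emph{not} transfer $x$'s adjacency to $y_3$ onto $x_1$. The paper's route here is direct: with $xz_1,xy_2,xy_3\notin E$ (from (i),(ii)) and $x'z_1\notin E$ (by \ref{aZcard}), one of $\{x,x',v_2,v_3,y_2\}$ or $\{x,x',y_2,v_3,z_1\}$ is an induced $P_5$, according as $x'y_2\notin E$ or $x'y_2\in E$.
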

\begin{proof}$(i)$:~Suppose to the contrary that there are adjacent vertices, say $x\in X_1$ and $y\in Y_2\cup Y_3$. We may assume (up to symmetry) that $y\in Y_2$, and  we may assume that $xz_1\notin E(G)$ (by \ref{aZcard}). Then since one of $\{x,y,v_3,v_2,y_3\}$ or $\{x,y,y_3,v_2,z_1\}$ does  not induce a $P_5$, we have  $xy_3,yy_3\in E(G)$, and then $\{v_1,v_2,v_3,x,y,y_3,z_1\}$ induces an $F_1$, a contradiction. This proves \cref{G2A1B2B3ancom}. $\diamond$

\smallskip
\no{$(ii)$}:~If there are adjacent vertices, say $x\in X_1$ and $z\in Z$, then $\{x,z,v_3,y_2,y_3\} $ induces a $P_5$ (by \ref{BZancom}  and \cref{G2A1B2B3ancom}). This proves \cref{G2X1Z}. $\diamond$

\smallskip
\no{$(iii)$}:~Let $x$ be a vertex in $X$ which has a neighbor in $Z$, say $z$. We may assume that $z\neq z_1$ (by \ref{aZcard}).  Clearly $x\notin X_1$ (by \cref{G2X1Z}), and we may assume, up to symmetry, that $x\in X_2$. Then $xz_1\notin E(G)$ (by \ref{aZcard}). Then since one of $\{x,z,v_3,y_2,y_3\}$   or $\{y_2,y_3,x,z,z_1\}$  does not induce a $P_5$ (by \ref{Zclq}), we have $xy_2,xy_3\in E(G)$. This proves \cref{G2xy2y3com}. $\diamond$

\smallskip
\no{($iv)$}:~Suppose to the contrary that there are adjacent vertices, say $x\in X_1$ and $x'\in X_2\cup X_3$. We may assume (up to symmetry) that $x'\in X_2$. Also  we may assume that $x'z_1\notin E(G)$ (by \ref{aZcard}). Moreover, $xz_1,xy_2,xy_3\notin E(G)$ (by \cref{G2A1B2B3ancom} and \cref{G2X1Z}). Then one of  $\{x,x',v_2,v_3,y_2\}$ or $\{x,x',y_2,v_3,z_1\}$ induces a $P_5$, a contradiction. This proves \cref{G2A1A2ancom}. $\diamond$

\smallskip
\no{$(v)$}:~We will show for $j=2$. Suppose that the assertion is not true. Then there are vertices,  say $x$, $x'$ in $X_2$, and  $z$, $z'$ in $Z$ such that $xz,x'z'\in E(G)$. Then $xz',x'z\notin E(G)$ (by \ref{aZcard}), and  $xy_3,x'y_3\in E(G)$ (by \cref{G2xy2y3com}).  Now since $\{x,y_3,x',z',v_3\}$ does not induce a $P_5$, we have $xx'\in E(G)$. But then $\{z,z',v_2,v_3,x,x',y_3\}$ induces an $F_1$, a contradiction. This proves \cref{G2znbdXicard}.
\end{proof}

\begin{lemma}\label{G2X2Y3com} For $i\in \{1,2,3\}$, suppose that  there are vertices, say $p\in X_i$, $y\in Y_i$, $z\in Z$ and $t\in L$ such that $pt,zt\in E(G)$ and $pz\notin E(G)$. Then  for $j\in \{i+1,i-1\}$, the following hold:  (i) $\{p,t\}$ is complete to $Y_j$. (ii) If $y$ has a neighbor in $Y_j$, then $\{y\}$  is complete to $Y_j\cup \{p\}$.\end{lemma}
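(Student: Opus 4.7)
The plan is to prove both (i) and (ii) directly, by exhibiting in each sub-case an explicit induced $P_5$. By the cyclic symmetry of $C=\{v_1,v_2,v_3\}$ it suffices to carry out the argument with $i=1$ and $j=2$. Throughout I will freely use the vertex-type constraints ($p\in X_1$ is adjacent only to $v_1$ in $C$, $t\in L$ is anticomplete to $C$, and $b\in Y_2$ is adjacent to $v_1,v_3$ and not to $v_2$), together with the facts that $Z$ is complete to $C$ and, by \ref{BZancom}, that $Y$ is anticomplete to $Z$.

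For part (i), fix an arbitrary $b\in Y_2$ and split into three sub-cases. If $pb,tb\notin E(G)$, then $\{b,v_3,z,t,p\}$ induces a $P_5$ in that order. If $tb\in E(G)$ and $pb\notin E(G)$, then $\{p,t,b,v_3,v_2\}$ induces a $P_5$. If $pb\in E(G)$ and $tb\notin E(G)$, then $\{b,p,t,z,v_2\}$ induces a $P_5$. In each case the required non-edges are immediate from the type information, from the hypothesis $pz\notin E(G)$, and from \ref{BZancom}. Since $G$ is $P_5$-free, the only surviving possibility is $pb,tb\in E(G)$, which gives (i).

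For part (ii), let $b'\in Y_2\cap N(y)$. First I would observe that every $b\in Y_2\sm\{b'\}$ satisfies $bb'\notin E(G)$: otherwise $b'$ would lie in a big component of $G[Y_2]$, which by \ref{BiBi+1bigcomancom} (applicable since $z\in Z$) is anticomplete to $Y_1\cup Y_3$, contradicting $yb'\in E(G)$. I then show that $yb\in E(G)$ for every $b\in Y_2$: if $yb\notin E(G)$ for some $b\in Y_2$ (necessarily $b\neq b'$), part (i) supplies $pb,tb\in E(G)$, and then the set $\{v_2,y,b',p,b\}$ induces a $P_5$ when $yp\notin E(G)$ (using $bb'\notin E(G)$), while $\{b,p,y,v_2,z\}$ induces a $P_5$ when $yp\in E(G)$. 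Finally, to obtain $yp\in E(G)$ I would rule out the opposite assumption: if $yp\notin E(G)$, then $\{z,v_2,y,b',p\}$ induces a $P_5$ in that order.

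The main obstacle is selecting, for each sub-case, a $5$-set whose path structure uses only non-edges that are immediately guaranteed by the set-up; in particular, part (ii) relies on the preliminary observation $bb'\notin E(G)$ for $b\in Y_2\sm\{b'\}$, which is what handles the $yp\notin E(G)$ sub-case. Notably, the argument uses neither $F_1$-freeness nor the earlier Lemma~\ref{lemma-F2}, relying only on $P_5$-freeness together with the general partition properties developed in Section~\ref{genprop}.
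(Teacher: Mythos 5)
Your proof is correct; every $5$-set you exhibit is indeed an induced $P_5$ under the stated adjacency assumptions, and the preliminary observation that $b'$ is isolated in $G[Y_j]$ (via \ref{BiBi+1bigcomancom}, which applies since $z\in Z$) does close the $yp\notin E(G)$ sub-case. The method is the same as the paper's -- direct $P_5$-hunting using the Section~\ref{genprop} properties -- but the organization differs in both parts. For (i), the paper exhibits only your first $P_5$ (namely $\{p,t,z,v_{i-1},y'\}$) and then invokes \ref{AiBi+1homset} to conclude that $y'$ is adjacent to both or neither of $p,t$; your other two sub-cases are in effect a re-derivation of that instance of the homogeneity lemma, so your version of (i) is more self-contained but longer. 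For (ii), the paper reverses your order: it first deduces $py\in E(G)$ from the single $P_5$ $\{p,r,y,v_{i+1},z\}$ (using (i) to get $pr\in E(G)$), and then $\{u,p,y,v_{i+1},z\}$ immediately forces $uy\in E(G)$ for every $u\in Y_{i+1}$. Establishing $yp$ first makes the extension to all of $Y_j$ a one-line consequence and avoids any appeal to \ref{BiBi+1bigcomancom} or to the isolation of $b'$; your route, which defers $yp$ to the end, pays for that deferral with the extra structural observation. Both arguments are sound; the paper's is shorter and uses strictly fewer of the Section~\ref{genprop} facts.
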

\begin{proof}We will show for $j=i+1$. Since for any $y'\in Y_{i+1}$, $\{p,t,z,v_{i-1},y'\}$ does not induce a $P_5$, $\{p,t\}$ is complete to $Y_{i+1}$ (by \ref{AiBi+1homset}). This proves $(i)$. To prove $(ii)$, we pick neighbor of $y$ in $Y_{i+1}$, say $r$.  Then since $\{p,r,y,v_{i+1},z\}$ does not induce a $P_5$,  $py\in E(G)$. Hence, for any $u\in Y_{i+1}$, $\{u,p,y,v_{i+1},z\}$ does not induce a $P_5$, $\{y\}$  is complete to $Y_{i+1}$. This proves \cref{G2X2Y3com}. \end{proof}

\begin{lemma}\label{G2LYcorel}
	For any $t\in L$, we have either $ty_2\in E(G)$ or $ty_3\in E(G)$.
\end{lemma}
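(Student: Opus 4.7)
The plan is to suppose, for contradiction, that some $t\in L$ satisfies $ty_2,ty_3\notin E(G)$, and to show this forces $G$ either to contain one of the forbidden induced subgraphs ($P_5$, $K_5-e$ or $F_1$) or to admit a clique cut-set, contradicting the standing hypothesis of Section~\ref{GcontF2}. Because $C\cup Z$ is a clique (\ref{Zclq}), the end-goal is essentially to trap $t$ so that $C\cup Z$ (or an auxiliary clique) separates $t$ from $\{y_2,y_3\}$.

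First I would rule out $t$ having a neighbor in $X$. Suppose $tx\in E(G)$ with $x\in X_i$. By \ref{Atcorel}(a), $\{t\}$ is complete to $X_{i+1}\cup X_{i+2}$, and by \ref{Atcorel}(b), $X_i$ is complete to $X_{i+1}\cup X_{i+2}$. When $i=1$, \cref{G2A1B2B3ancom} and \cref{G2A1A2ancom} force $X_2\cup X_3=\es$, and then one of the $4$-tuples $\{t,x,v_1,v_2,y_3\}$ or $\{t,x,v_1,v_3,y_2\}$ becomes an induced $P_5$ (using that the missing-edge checks succeed because $t\in L$ and $x\in X_1$). When $i\in\{2,3\}$ (say $i=2$), a similar path $\{t,x,v_2,v_1,y_3\}$ produces an induced $P_5$ unless $xy_3\in E(G)$; iterating with $y_2$ either collapses $\{v_1,x,y_2,y_3\}$ plus some $z_k$ into a $K_5-e$ or into a configuration containing an $F_1$ on $\{v_1,v_2,v_3,y_2,y_3,z_k,x\}$.

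Second I would show $t$ has no neighbor in $Z$. For any $z\in N(t)\cap Z$, the set $\{t,z,v_3,y_2\}$ is an induced $P_4$ (by \ref{BZancom} and the standing assumption $ty_2\notin E(G)$), and symmetrically for $y_3$. A $P_5$ extension on the $y_2$-side must be ruled out using the neighbors of $y_2$ in $Y\cup X\cup L$; applying \ref{F2azancomLcorel} together with the second vertex $z'\in Z$ (available from the $F_2$ certificate) and the $F_1$-free hypothesis to configurations of the form $\{v_1,v_2,v_3,y_2,y_3,z,w\}$ forces a contradiction. Third, neighbors of $t$ in $Y_1\cup(Y_2\setminus\{y_2\})\cup(Y_3\setminus\{y_3\})$ are ruled out by analogous $P_5$ arguments routing through $C$ and a vertex of $Z$.

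Once all of these are excluded, the neighborhood of $t$ lies entirely in $L$. Applying the same argument to every vertex of the component $D$ of $G[L]$ containing $t$, one sees that $D$ is anticomplete to $X\cup Y\cup Z$, so the only vertices of $G$ adjacent to $D$ lie in $C\cup Z$. Since $C\cup Z$ is a clique by \ref{Zclq} and $y_2\notin D$, this makes $C\cup Z$ a clique cut-set of $G$ separating $D$ from $y_2$ — contradicting the hypothesis that $G$ has no clique cut-set.

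The main obstacle will be the second step: the $F_1$-free hypothesis is a seven-vertex, highly non-local constraint, and extracting a clean contradiction from $tz\in E(G)$ requires exploiting \emph{both} available vertices of $Z$ together with a carefully chosen extending vertex from $Y_2\cup Y_3\cup X\cup L$. The remaining steps are routine $P_5$-extension / $K_5-e$-collapse arguments of the same flavor as the proofs of \cref{F1-prop} and \cref{A1clqX1Y2Y3com}.
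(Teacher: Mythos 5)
There is a genuine gap, and it sits exactly where your plan relies on local forbidden-subgraph arguments to exclude neighbours of $t$ in $X$. Recall that a vertex of $Y_j$ is adjacent to \emph{both} vertices of $C\sm\{v_j\}$; in particular $v_1y_2$, $v_1y_3$ and $v_2y_3$ are all edges. Consequently each of the five-vertex sets you propose in your first step --- $\{t,x,v_1,v_2,y_3\}$, $\{t,x,v_1,v_3,y_2\}$ and $\{t,x,v_2,v_1,y_3\}$ --- contains a chord and is not an induced $P_5$. This is not a repairable slip: the case where $t$ has a neighbour $x\in X_1$ genuinely cannot be excluded by exhibiting a forbidden induced subgraph. (Take the $F_2$ together with a vertex $x$ adjacent only to $v_1$ and a vertex $t$ adjacent only to $x$; this graph is $(P_5,K_5-e,F_1)$-free, yet $t\in L$ has a neighbour in $X_1$ and is nonadjacent to $y_2,y_3$ --- it is only excluded because $\{v_1\}$ is a clique cut-set.) The paper therefore argues globally: it takes the whole component $Q$ of $G[L]$ containing $t$, shows $V(Q)$ is anticomplete to $Z$, then to $X_2\cup X_3\cup Y$, concludes from connectedness that $Q$ attaches to $X_1$, deduces $X_2\cup X_3=\es$ via \ref{Atcorel} and \cref{G2A1A2ancom}, and finally uses the hypothesis that $C\cup Z$ is not a clique cut-set separating $X_1\cup V(Q)$ from $Y_2\cup Y_3$ to force $Y_1\neq\es$ with a neighbour relation to $X_1$; only then does a $P_5$ appear, routed through a vertex $y\in Y_1$ (which works precisely because $yv_1\notin E(G)$, unlike $y_2$ and $y_3$). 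Your plan has no mechanism for producing that $Y_1$-vertex, so it cannot close this case.

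Two smaller points. Your second step is in fact immediate and needs none of the machinery you flag as ``the main obstacle'': if $tz\in E(G)$ with $z\in Z$, then $t$-$z$-$v_3$-$y_2$-$y_3$ is already an induced $P_5$, since $y_2y_3\in E(G)$ in the $F_2$, $Y$ is anticomplete to $Z$ by \ref{BZancom}, $v_3y_3\notin E(G)$, and $ty_2,ty_3\notin E(G)$ by assumption; no appeal to $F_1$-freeness or to \ref{F2azancomLcorel} is needed. Finally, your concluding step is misstated: if the component $D$ of $G[L]$ containing $t$ were anticomplete to $X\cup Y\cup Z$, then, since $L$ has no neighbours in $C$ by definition, $D$ would have no neighbours outside $D$ at all and $G$ would simply be disconnected; the assertion that ``the only vertices of $G$ adjacent to $D$ lie in $C\cup Z$'' is inconsistent with the exclusions you have just made.
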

\begin{proof}Suppose not. Let $t\in L$ be such that $ty_2, ty_3\notin E(G)$. Let $Q$  be the component of $G[L]$ such that $t\in V(Q)$.  Then $V(Q)$ is anticomplete to $\{y_2,y_3\}$ (by \ref{AiBi+1homset}). We claim that $V(Q)$ is   anticomplete to $X_2\cup X_3\cup Y\cup Z$. If not, then there are adjacent vertices, say $a\in V(Q)$ and (up to symmetry)  $b\in Z\cup X_2\cup Y_1$. If $b\in Z$, then $\{a,b,v_3,y_2,y_3\}$ induces a $P_5$; so $V(Q)$ is anticomplete to $Z$. If $b\in X_2\cup  Y_1$,  then we may assume that $bz_1\notin E(G)$ (by \ref{BZancom} and \ref{aZcard}), and then $\{a,b,v_2,v_1,y_2\}$  or $\{a,b,y_2,v_1,z_1\}$ induces a $P_5$; so $V(Q)$ is   anticomplete to $X_2\cup X_3\cup Y\cup Z$.    Now since $G$ is connected,  $V(Q)$ is not anticomplete to $X_1$, and so there are adjacent vertices, say $q\in V(Q)$ and $x\in X_1$. Then by \ref{Atcorel} and \cref{G2A1A2ancom}, $X_2\cup X_3=\es$.
Since $X_1$ is anticomplete to $Y_2\cup Y_3\cup Z$ (by \cref{G2A1B2B3ancom} and \cref{G2X1Z}), and since  $C\cup Z$ is not a clique cut-set of $G$ (by \ref{Zclq} and \ref{AiBi+1homset}) separating $X_1\cup V(Q)$ and $Y_2\cup Y_3$, $X_1$ is not anticomplete to $Y_1$. So $Y_1\neq \es$, and let $y\in Y_1$. But then one of $\{q,x,v_1,v_2,y\}$ or  $\{q,x,y,v_2,z_1\}$ induces a $P_5$ (by \ref{BZancom} and \cref{G2X1Z}), a contradiction. This proves \cref{G2LYcorel}.
  \end{proof}

  Now we prove the main theorem of this section, and is given below.

\begin{theorem}\label{cont-H2}
		Let $G$ be a connected ($P_5,K_5-e$)-free  graph. If $G$ contains  an $F_2$, then either $G$ is the complement of a bipartite graph or $G$ has a clique cut-set or $\chi(G)\leq 5$.
	\end{theorem}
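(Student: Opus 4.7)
The overall plan is to first dispose of the case where $G$ contains an $F_1$ by invoking \cref{cont-HG}. So we may assume $G$ is $F_1$-free and that $G$ has no clique cut-set and is not the complement of a bipartite graph; the goal is to show $\chi(G)\le 5$. Fix the partition $V(G)=C\cup X\cup Y\cup Z\cup L$ coming from $C=\{v_1,v_2,v_3\}$, and apply \Cref{lemma-F2,G2X2Y3com,G2LYcorel} along with the general properties of Section~\ref{genprop}. A first consequence of \cref{G2A1B2B3ancom}, \cref{G2X1Z} and \cref{G2A1A2ancom} is that $X_1$ is anticomplete to $X_2\cup X_3\cup Y_2\cup Y_3\cup Z$; so the external neighbours of $X_1$ sit only in $\{v_1\}\cup Y_1\cup L$.

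The next step is to establish $|Z|=2$. Since $\{z_1,z_2\}\subseteq Z$ and $C\cup Z$ is a clique (by \ref{Zclq}), having $|Z|\ge 3$ would force $\omega(G)\ge 6$, which is incompatible with a $5$-colouring; so we need to upgrade this to a structural obstruction. Given $z\in Z\setminus\{z_1,z_2\}$, the standing facts give: $\{z\}$ is anticomplete to $Y$ (by \ref{BZancom}), anticomplete to $X_1$ (by \cref{G2X1Z}), and by \cref{G2znbdXicard} together with \ref{aZcard} at most one element of $Z$ can have neighbours in $X_j$ for each $j\in\{2,3\}$. Choosing $z$ outside these two distinguished vertices of $Z$ isolates $z$ from $X\cup Y$, after which \ref{F2azancomLcorel} together with the $F_1$-freeness is used to push the same conclusion through $L$. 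One then verifies that $C\cup(Z\setminus\{z\})$ is a clique cut-set separating $\{z\}$ from the rest of $V(G)$, contradicting our assumption. Hence $Z=\{z_1,z_2\}$ and $C\cup Z$ is a $K_5$, so colours $1,\dots,5$ are forced on $C\cup Z$.

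With this pinned down, the remaining task is to extend the $5$-colouring of $C\cup Z$ across $X\cup Y\cup L$. By \cref{G2LYcorel} we may write $L=L_2\cup L_3$ with $L_j\subseteq N(y_j)$, and by \cref{K4XZhomset}, \cref{K4XiYihomset} and \cref{G1Y2A2znbdhomset}-style arguments (which are available since we have $Y_2\cup Y_3\ne\es$ and $Z\ne\es$) each component of $G[X_j]$ is either $P_3$-free or has a tightly controlled interaction with $Y_{j'}$ and $Z$. Using \cref{G2X2Y3com} to encode the few possible ways $Y_i$ can interact with $Y_{i\pm1}$ through an $L$-vertex that sees $Z$, one assigns each $X_j$ and $Y_j$ to the pair of colours in $\{1,\dots,5\}\setminus\{j,4\text{ or }5\}$ compatible with its adjacencies; the remaining $L$-vertices inherit colours via their unique attachment pattern to $y_2,y_3,Z$.

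The main obstacle is the second step: eliminating $|Z|\ge 3$ without a bare clique-cut-set hypothesis. This requires combining \cref{G2xy2y3com}, \cref{G2znbdXicard}, \ref{F2azancomLcorel} and the standing $F_1$-freeness to rule out precisely the configurations in which a new $z\in Z$ could leak neighbours into $X_2\cup X_3\cup L$. Once that reduction is in hand, the five colour classes fall into place using only the homogeneity results of Section~\ref{genprop} and \Cref{lemma-F2,G2X2Y3com,G2LYcorel}, and the theorem follows.
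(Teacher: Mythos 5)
Your opening move matches the paper's: invoke \cref{cont-HG} to assume $G$ is $F_1$-free, has no clique cut-set and is not the complement of a bipartite graph, and then aim for $\chi(G)\leq 5$. But two genuine gaps remain. First, the central structural fact of the paper's proof is missing: that $L$ is complete to $Z$ (the paper's \ref{G2LZcom}), which is proved by a lengthy argument showing that a nonadjacent pair $t\in L$, $z\in Z$ forces $V(G)$ to split into two cliques. This claim is what actually yields $Z=\{z_1,z_2\}$ when $L\neq\es$ (via \ref{lZcard}: a vertex of $L$ with three neighbours in the clique $Z$ gives a $K_5-e$), and it is also what makes $G[L]$ bipartite, which you need for the colouring. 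Your alternative route --- isolating a third vertex $z\in Z$ from $L$ and exhibiting $C\cup(Z\sm\{z\})$ as a clique cut-set --- points in the wrong direction when $L\neq\es$: the true structure is that \emph{every} vertex of $Z$ is complete to $L$, so $z$ cannot be separated from $L$; the contradiction must come from \ref{lZcard}, not from a cut-set. Your sketch (``\ref{F2azancomLcorel} together with $F_1$-freeness pushes the conclusion through $L$'') does not supply this.

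Second, the colouring step is essentially absent. The paper must prove that $G[X_2\cup Y_3]$ and $G[X_3\cup Y_2]$ are $K_3$-free (\ref{G2X2Y3str}, itself a nontrivial clique-cut-set argument), handle possible $5$-ring components of $G[X_i]$ via \cref{P5K3prop} by extracting maximum stable sets $A_i$ and then $B_i$, prove the separation claim \ref{G2B1X3emp} to decide which of $X_1\cup X_2\cup Y_1$ or $X_1\cup X_3\cup Y_1$ vanishes when $L$ meets $X$, establish \ref{G2XiXjbigcomancom}, and only then exhibit five explicit stable sets and verify them in \ref{G2-5stable}. Your description (``assigns each $X_j$ and $Y_j$ to the pair of colours \dots compatible with its adjacencies; the remaining $L$-vertices inherit colours via their unique attachment pattern'') names no stable sets and verifies nothing; in particular it does not address why a $5$-ring component of some $G[X_j]$ (chromatic number $3$) can be absorbed into the available colour classes. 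Finally, your appeal to ``\cref{G1Y2A2znbdhomset}-style arguments'' is not legitimate here: that claim is proved in Section~\ref{GcontF1} under the hypothesis that $G$ \emph{contains} an $F_1$, which contradicts the standing $F_1$-freeness in this case.
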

\begin{proof}
Let $G$ be a connected ($P_5,K_5-e$)-free   graph. Suppose that $G$ contains an   $F_2$ with vertices and edges as shown in Figure~\ref{fig-F123}. Let $C:=\{v_1,v_2,v_3\}$. Then, with respect to $C$, we define the sets $X$, $Y$, $Z$ and $L$ as in Section~\ref{genprop}, and use the properties in Section~\ref{genprop}.  Clearly, $y_2\in Y_2$, $y_3\in Y_3$ and $z_1,z_2\in Z$, so that $Y_2$, $Y_3$ and $Z$ are nonempty.  Recall that $C\cup Z$ is a clique (by \ref{Zclq}), and that $Y$ is anticomplete to $Z$ (by \ref{BZancom}). We may assume that $G$ has no clique cut-set, and that $G$ is not the complement of a bipartite graph.   From \cref{cont-HG}, we may assume that $G$ is $F_1$-free,   and we use \cref{lemma-F2} and \cref{G2LYcorel}.   We show that $\chi(G)\leq 5$ by using a sequence of claims given below.

  \begin{claim}\label{G2X1L}
  $X_1$ is anticomplete to $L$.
  \end{claim}
\no{\it Proof of \ref{G2X1L}}. This follows from \cref{G2A1B2B3ancom} and \cref{G2LYcorel}, and \ref{AiBi+1homset}.  $\Diamond$

\begin{claim}\label{G2LZcom}
	$L$ is complete to $Z$, and so $L$ induces a bipartite graph.
\end{claim}
\no{\it Proof of \ref{G2LZcom}}.~Suppose to the contrary that there are nonadjacent vertices, say $t\in L$ and $z\in Z$. Let $Q$ be the component of $G[L]$ such that $t\in V(Q)$. We may assume that $ty_2\in E(G)$ (by \cref{G2LYcorel}). Then since $\{t,y_2,y_3,v_2,z\}$ does not induce a $P_5$ (by \ref{BZancom}), we have $ty_3\in E(G)$.  So $V(Q)$ is complete to $\{y_2,y_3\}$ (by \ref{AiBi+1homset}),  and hence  $V(Q)$ is a clique (by \ref{uvadjnbd}). Moreover, we have the following:
\begin{enumerate}[leftmargin=0.78cm, label={($\alph*$)}]\itemsep=0pt
	\item Since for any $y\in Y_3$, one of $\{t,y_2,y,v_2,z\}$ or $\{y,v_2,v_3,y_2,t\}$ does not induce a  $P_5$,  $Y_3$ is complete to $\{t,y_2\}$. By using similar arguments, we see that $Y_2$ is complete to $Y_3\cup \{t\}$, and $Y_1$ is complete to $Y_2\cup Y_3\cup \{t\}$.

\item  If $X_1\neq\es$, then since $X_1$ is anticomplete to $(X\cup Y\cup Z\cup L)\sm (X_1\cup Y_1)$ (by \cref{G2X1Z}, \cref{G2A1B2B3ancom}, \cref{G2A1A2ancom} and \ref{G2X1L}) and since $\{v_1\}$ is not a cut-vertex of $G$ separating $X_1$ and rest of the vertices, there are adjacent vertices, say $x\in X_1$ and $y\in Y_1$, and then  $\{t,y,x,v_1,z\}$ induces a $P_5$ (by \ref{BZancom} and $(a)$), a contradiction. So  $X_1=\es$.
	\item Now we will show that $X$ is complete to $Y_2\cup Y_3\cup \{t\}$. Let $x\in X_2$. If $xz\notin E(G)$, then by using a similar argument as in $(a)$, we see that $\{x\}$ is complete to $Y_2\cup \{t\}$, and if $xz\in E(G)$, then by using a similar argument as in \cref{G2xy2y3com},  we see that $\{x\}$ is complete to $Y_2\cup \{t\}$.   Also, since for any   $y\in Y_3$, $\{x,t,y,v_1,v_3\}$ does not induce a $P_5$,   $\{x\}$ is complete to $Y_3$. Since $x$ is arbitrary, $X_2$ is complete to $Y_2\cup Y_3\cup \{t\}$.  Likewise, $X_3$ is complete to $Y_2\cup Y_3\cup \{t\}$. So $X$ is complete to $Y_2\cup Y_3\cup \{t\}$ (by $(b)$).
	\item From $(a)$, $(b)$, $(c)$ and \ref{AiBi+1homset}, we conclude that $V(Q)$ is complete to $X\cup Y$.
		\item Next we will show that $L\sm V(Q)=\es$. Suppose not. Let $q\in L\sm V(Q)$. Since for any $t'\in L\sm V(Q)$ and for $j\in \{2,3\}$, $\{v_j,z,t',y_j,t\}$ does not induce a $P_5$ (by \cref{G2LYcorel}),  $L\sm V(Q)$ is anticomplete to $\{z\}$. Likewise, $V(Q)$ is anticomplete to $\{z\}$. Then as in $(c)$, the component that contains $q$ is complete to $X\cup Y_2\cup Y_3$. Since $C\cup (Z\sm \{z\})$ is not a clique cut-set of $G$ separating $\{z\}$ and the rest of the vertices, there is a vertex in $X$, say $x$ such that $xz\in E(G)$. Then  $\{x,y_2,y_3,t,q\}$ induces a $K_5-e$, a contradiction. Hence $L\sm V(Q)=\es$.
\end{enumerate}
Now if $X_2\cup X_3\cup Y_1\neq \es$, then since $Y_2, Y_3$ and $L$ are nonempty, by above arguments and \ref{4setsclique}, $X\cup Y\cup L$ is a clique, and hence $G$ is the complement of a bipartite graph (by \ref{Zclq}), a contradiction; so we may assume that 	$X_2\cup X_3\cup Y_1= \es$.  Then since $C\cup (Z\sm \{z\})$ is not a clique cut-set   separating $\{z\}$ from the rest of the vertices of $G$,   there is a vertex in $V(Q)\sm \{t\}$, say $t'$, such that $t'z\in E(G)$. Now since $Y_2, Y_3, \{t\}$ and $V(Q)\sm \{t\}$ are nonempty, again by above arguments and \ref{4setsclique}, $X\cup Y \cup L$ is a clique.
   So $V(G)$ can be partitioned into two cliques, namely $C\cup Z$ (by \ref{Zclq}) and $X\cup Y\cup L$, and hence $G$ is the complement of a bipartite graph, a contradiction. This proves the first assertion.

Now  since $L$ is complete to $\{z_1,z_2\}$,  $L$ induces a $P_3$-free graph (by \ref{uvadjnbd}). Moreover, by using \cref{G2LYcorel}, \ref{BZancom}, \ref{AiBi+1homset} and \ref{uvnonadjnbd}, we see that each component of $G[L]$ is $K_3$-free. So $L$ induces a ($P_3, K_3$)-free graph, which is a bipartite graph. This proves \ref{G2LZcom}.
 $\Diamond$


\begin{claim}\label{G2Zz1z2}
 $Z=\{z_1,z_2\}$.
\end{claim}
\no{\it Proof of \ref{G2Zz1z2}}.~If $L\neq \es$ then  $Z=\{z_1,z_2\}$ (by \ref{lZcard} and \ref{G2LZcom}). So we may assume that $L=\es$. Suppose to the contrary that there is a vertex, say $z\in Z\sm \{z_1,z_2\}$. Then $C\cup (Z\sm \{z\})$ is  a clique cut-set of $G$ (by \cref{G2znbdXicard} and \ref{BZancom}), a contradiction.  This proves \ref{G2Zz1z2}. $\Diamond$

\begin{claim}\label{G2X2Y3str}
	 $G[X_2\cup Y_3]$ is $K_3$-free. Likewise,  $G[X_3\cup Y_2]$,  $G[X_1]$ and   $G[Y_1]$ are $K_3$-free.
\end{claim}
\no{\it Proof of \ref{G2X2Y3str}}.~Suppose not. Let $Q$ be a component of $G[X_2\cup Y_3]$ which contains a $K_3$ with vertices, say $p,q$ and $r$. We may assume that $V(Q)$ is anticomplete to $\{z_1\}$ (by \cref{G2znbdXicard}). Moreover,  we claim that $V(Q)$ is anticomplete to $X_1\cup X_3\cup Y_1\cup Y_2\cup Z\cup L$, and is given below.
\begin{enumerate}[leftmargin= 0.7cm, label={($\alph*$)}]\itemsep=0pt
	\item  First we will show that $V(Q)$ is a homogeneous set in $G[V(Q)\cup Y_1\cup Y_2]$. If not, then there are vertices, say $a,b\in V(Q)$ and $c\in Y_1\cup Y_2$ such that $ab,ac\in E(G)$ and $bc\notin E(G)$, and then  $\{b,a,c,v_3,z_1\}$ induces a $P_5$ (by \ref{BZancom}), a contradiction. So $V(Q)$ is a homogeneous set in $G[V(Q)\cup Y_1\cup Y_2]$.
	\item If there is a vertex, say $y\in Y_2$, which has a neighbor in $V(Q)$, then $\{y\}$ is complete to $\{p, q,r\}$ (by $(a)$), and then    $\{p,q,r,v_2,y\}$ induces a $K_5-e$; so  $V(Q)$ is anticomplete to $Y_2$.

	\item  $V(Q)$ is anticomplete to $Z$ (by $(b)$, \cref{G2xy2y3com} and by \ref{BZancom}), $V(Q)$ is anticomplete to $Y_1$ (by \cref{G2A1B2B3ancom}, $(a)$ and \ref{BiBi+1bigcomancom}), and $V(Q)$ is anticomplete to $X_1$ (by \cref{G2A1B2B3ancom} and \cref{G2A1A2ancom}).
	\item  By $(c)$, \ref{BZancom} and \cref{G2znbdXicard}, there is a vertex in $Z$, say $z'$ such that $\{z'\}$ is anticomplete to $V(Q)\cup X_3$. Then by using similar arguments in  $(a)$ and $(b)$, we see that $V(Q)$ is anticomplete to $X_3$.
	
	\item Finally we show that $V(Q)$ is anticomplete to $L$. Suppose not. Then there are adjacent vertices, say $t\in L$ and $u\in V(Q)$. Now since $\{t,u,v_2,v_3,y_2\}$ does not induce a $P_5$ (by $(b)$), we have $ty_2\in E(G)$. Then since $G[\{p,q,r,v_2,v_3,y_2,t\}]$ does not contain a $P_5$ (by $(b)$), $\{t\}$ is complete to $\{p,q,r\}$. But then $\{p,q,r,t,v_2\}$ induces a $K_5-e$, a contradiction. So $V(Q)$ is anticomplete to $L$.
\end{enumerate}	
	Thus we conclude that $V(Q)$ is anticomplete to $X_1\cup X_3\cup Y_1\cup Y_2\cup Z\cup L$, and hence $\{v_1,v_2\}$ is a clique cut-set separating $Q$ from the rest of the vertices  of $G$, a contradiction. This proves \ref{G2X2Y3str}. $\Diamond$

\begin{claim}\label{G2B1X3emp}
	If $L$ is not anticomplete to $X$, then either $X_1\cup X_2\cup Y_1=\es$ or $X_1\cup X_3\cup Y_1=\es$.
\end{claim}
\no{\it Proof of \ref{G2B1X3emp}}.~Since $L$ is not anticomplete to $X$, there are adjacent vertices, say $t\in L$ and $x\in X$. By \cref{G2LYcorel}, each vertex in $L$ is adjacent to one of $y_2$ and $y_3$, and $L$ is complete to $\{z_1,z_2\}$ (by \ref{G2LZcom}). So $x\notin X_1$ (by \ref{AiBi+1homset} and \cref{G2A1B2B3ancom}), and we may assume that $x\in X_2$. Then  $X_1=\es$ (by \ref{Atcorel}) and $xy_3\in E(G)$ (by \cref{G2X2Y3com}), and hence $Y_1=\es$ (by \cref{G2A1B2B3ancom}). Now it is enough to show that $X_3=\es$. Suppose not, and let $x'\in X_3$. We show that $X\cup Y\cup L$ is a clique.
  First we show that $X= \{x,x',y_2,y_3\}$.
Observe that $xx', x't\in E(G)$ (by \ref{Atcorel}), and then  $x'y_2\in E(G)$ (by \cref{G2X2Y3com}). So $X_2\cup X_3$ is complete to $\{t\}$, and $X_2$ is complete to $X_3$ (by \ref{Atcorel}). Thus $X_2$, $Y_3$, $\{t\}$  and $\{y_2\}$ are complete to each other (by \cref{G2X2Y3com}); so  $X_2\cup Y_3$ is a clique (by \ref{4setsclique}), and hence $X_2=\{x\}$ and $Y_3=\{y_3\}$ (by \ref{G2X2Y3str}).
			Likewise, $X_3=\{x'\}$ and $Y_2=\{y_2\}$.  So $X= \{x,x',y_2,y_3\}$.  Next we will show that $L$ is complete to $X$. By \ref{AiBi+1homset} and \ref{Atcorel}, it is enough to show that  $L$ is complete to $\{x,x'\}$. Suppose not, and let $t'\in L$ be such that $t'$ is not complete to $\{x,x'\}$. So by \ref{Atcorel}, $\{t'\}$ is anticomplete to $\{x,x'\}$. Recall that, by \cref{G2LYcorel}, $t'$  is adjacent to one of $y_2$ and $y_3$. If $t'y_2\in E(G)$, then we may assume that  $x'z_1\notin E(G)$ (by \ref{aZcard}), and then $\{x',y_2,t',z_1,v_2\}$ induces a $P_5$ (by \cref{G2LZcom}); so $t'y_2\notin E(G)$ and hence $t'y_3\in E(G)$. Then we assume that $xz_1\notin E(G)$ (by \ref{aZcard}), and then $\{x,y_3,t',z_1,v_2\}$ induces a $P_5$ (by \cref{G2LZcom}), a contradiction. So $L$ is complete to $\{x,x',y_2,y_3\}$.    So by  above arguments and by \ref{4setsclique}, we conclude that $X\cup Y\cup L$ is a clique. Thus $V(G)$ can be partitioned in two disjoint cliques, namely, $X\cup Y\cup L$ and $C\cup Z$, and hence $G$ is the complement of a bipartite graph, a contradiction. This proves \ref{G2B1X3emp}. $\Diamond$

\medskip

By Theorem~\ref{P5K3prop}:$(a)$ and \cref{G2X2Y3str}, for $i\in \{1,2,3\}$:  we pick a maximum stable set from each $5$-ring-component of $G[X_i]$ (if exists), and let $A_i$ be the union of these sets. So $G[X_i\sm A_i]$ is a bipartite graph. Next, we pick a maximum stable set from each big-component of $G[X_i\sm A_i]$ (if exists), and let $B_i$ be the union of these sets. Let $X_i'=X_i\sm (A_i\cup B_i)$.  Also,  let $Y_i'$ be a maximal stable set in  $G[Y_i]$. Then  $Y_i\sm Y_i'$ is a stable set (by \ref{BZancom} and \ref{G2X2Y3str}).
		Now we claim the following:

\begin{claim}\label{G2XiXjbigcomancom}
For $j,\ell \in \{2,3\}$ and $j\neq \ell$,	$A_j\cup B_j$ is anticomplete to $X_{\ell}\cup Z$.
\end{claim}
\no{\it Proof of \cref{G2XiXjbigcomancom}}.~We will show for $j=2$. Suppose to the contrary there are adjacent vertices, say $a\in A_2\cup B_2$ and $b\in X_3\cup Z$. Moreover there is a vertex, say $c\in X_2'\sm A_2$, such that $ac\in E(G)$. Now $ay_3,cy_3\notin E(G)$ (by \ref{G2X2Y3str} and \ref{AiBi+1homset}), and so $b\in X_3$, $az_1,az_2\notin E(G)$ (by \cref{G2xy2y3com}). Also we may assume that $bz_1\notin E(G)$ (by \cref{G2znbdXicard}). Then since $\{a,b,v_3,v_1,y_3\}$  does not induce a $P_5$, we have $by_3\in E(G)$, and then $\{a,b,y_3,v_1,z_1\}$ induces a $P_5$, contradiction. This proves \ref{G2XiXjbigcomancom}. $\Diamond$

\medskip
By \ref{G2LZcom}, $L$ can be partitioned in two stable sets, say $L_1$ and $L_2$, and we may assume that if $L$ is not anticomplete to $X$, then $X_1\cup X_3\cup Y_1=\es$ (by \ref{G2B1X3emp}).  Now we define the following sets:  $S_1:=X_2'\cup B_3\cup Y_1'\cup \{v_1\}$,
			$S_2:=B_1\cup X_3'\cup L_2\cup \{v_2\}$, $S_3:=X_1'\cup B_2\cup L_1\cup \{v_3\}$, $S_4:=A_1\cup A_3\cup Y_2'\cup (Y_1\sm Y_1')\cup (Y_3\sm Y_3')\cup \{z_1\}$ and $S_5:=A_2\cup (Y_2\sm Y_2')\cup Y_3'\cup \{z_2\}$. Then $V(G)=\cup_{j=1}^5S_j$ (by \ref{G2Zz1z2}), and we claim the following:
			
			\begin{claim}\label{G2-5stable}
				$S_1, S_2, \ldots, S_5$ are stable sets.
			\end{claim}
			\no{\it Proof of \ref{G2-5stable}}.~Clearly $S_1$ is a stable set (by \cref{G2A1B2B3ancom} and \ref{G2XiXjbigcomancom}), and $S_2$ is a stable set (by \cref{G2A1A2ancom}, \ref{G2X1L} and \ref{G2B1X3emp}). 			Now if there are adjacent vertices, say $a\in B_2$ and $t\in L_1$, then $ay_3\in E(G)$ (by \ref{G2LZcom} and \cref{G2X2Y3com}:$(i)$), and for any neighbor of $a$ in $X_2'$, say $x$, we have $xy_3\in E(G)$ (by \ref{AiBi+1homset}), and hence $\{x,a,y_3\}$ induces a $K_3$ in $G[X_2\cup Y_3]$, a contradiction to \ref{G2X2Y3str}; so $B_2\cup L_1$ is a stable set. This implies that $S_3$ is a stable set (by \cref{G2A1A2ancom} and \ref{G2X1L}).

			Next  $Y_2'\cup (Y_1\sm Y_1')\cup (Y_3\sm Y_3')\cup \{z_1\}$ is a stable set (by \ref{BZancom}, \ref{BiBi+1bigcomancom} and by the definition of $Y_i'$),  and $A_1\cup A_3$ is anticomplete to $(Y_1\sm Y_1')\cup (Y_3\sm Y_3')\cup \{z_1\}$ (by \cref{G2A1B2B3ancom},  \cref{G2X1Z}, \ref{K4XiYihomset} and \ref{G2XiXjbigcomancom}). Since every vertex of $A_3$ has a neighbor in $X_3'$,  $A_3$ is anticomplete to $Y_2'$ (by \ref{AiBi+1homset} and \ref{G2X2Y3str}), and so $A_1\cup A_3$ is anticomplete to $Y_2'$ (by \cref{G2A1B2B3ancom}). Also $A_1$ is anticomplete to $A_3$ (by \cref{G2A1A2ancom}). Thus  we conclude that $S_4$ is a stable set. Likewise, $S_5$ is also a stable set. This proves \ref{G2-5stable}. $\Diamond$

\smallskip
So we conclude that $\chi(G)\leq 5$ (by \ref{G2-5stable}). This completes the proof of \cref{cont-H2}.
\end{proof}

\subsubsection{$(P_5, K_5-e, F_1, F_2)$-free graphs that contain an $F_3$}\label{GcontF3}

Let $G$ be a connected $(P_5,K_5-e,F_1, F_2)$-free graph which has no clique cut-set. Suppose that $G$ contains an  $F_3$ with vertices and edges as shown in Figure~\ref{fig-F123}. Let $C := \{v_1,v_2,v_3\}$. Then, with respect to $C$, we define the sets $X,Y,Z$ and $L$ as in Section~\ref{genprop}, and we use the properties in Section~\ref{genprop}. Clearly  $y_2\in Y_2$, $y_3\in Y_3$  and $z_1,z_2\in Z$  so that $Y_2$, $Y_3$ and $Z$ are nonempty. Recall that $C\cup Z$ is a clique (by \ref{Zclq}), and that $Y$ is anticomplete to $Z$ (by \ref{BZancom}).  Further the graph $G$ has some more properties which we give in two lemmas below.

\begin{lemma}For $i\in \{1,2,3\}$,  the following hold:

\vspace{-0.2cm}
\begin{lemmalist}
\item  \label{H3YiYi+1ancom}
			$Y_i$ is anticomplete to $Y_{i+1}\cup Y_{i-1}$.
\item \label{H3A1B2B3ancom}
			$X_1$ is anticomplete to $Y_2\cup Y_3$. Likewise, if $Y_1\neq \es$, then for all  $i\in \{1,2,3\}$, $X_i$ is anticomplete to $Y_{i+1}\cup Y_{i-1}$.
\item \label{H3XiYcorel}
			Either $X_i$ is anticomplete to $Y_{i+1}\cup Y_{i-1}$ or $X_i$ is anticomplete to $Y_i$.

\item \label{H3lYzcorel}
			If a vertex of $L$ has a neighbor in $Y_i$, then it is complete to $Y\cup Z$.  Moreover, $G[Y_i]$ is a bipartite graph and $Z=\{z_1,z_2\}$.

\item \label{H3X2X3empLcorel}
			If one of $X_2$ and $X_3$ is empty, then $L$ is not anticomplete to $Y.$

\end{lemmalist}
\end{lemma}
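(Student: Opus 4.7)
The plan is to prove the five parts in order, using the anchor configuration $\{v_1,v_2,v_3,y_2,y_3,z_1,z_2\}$ inducing an $F_3$ together with $(P_5,K_5-e,F_1,F_2)$-freeness. Throughout, I will exploit the basic properties from Section~\ref{genprop} (in particular \ref{Zclq}, \ref{BZancom}, \ref{AiBi+1homset}, \ref{BiBi+1bigcomancom}, \ref{aZcard}) and the inductive freeness conditions, together with the fact that $Z\neq\emptyset$ (since $z_1,z_2\in Z$).

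For \cref{H3YiYi+1ancom}, suppose to the contrary that there are adjacent vertices $y\in Y_i$ and $y'\in Y_{i+1}$ (the case $i-1$ is symmetric). Property \ref{BiBi+1bigcomancom} already rules out the case when $y$ lies in a big-component of $Y_i$, so both $\{y\}$ and $\{y'\}$ are singleton components; i.e.\ their other neighbors in $Y$ are none. The vertices $\{v_{i-1}, v_i, v_{i+1}, y, y', z_1, z_2\}$ now have essentially the edge structure of $F_2$ (with $v_i, v_{i+1}, v_{i-1}$ playing the role of $v_1,v_2,v_3$ and $y,y',z_1,z_2$ playing the role of $y_2,y_3,z_1,z_2$). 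The only way to avoid an $F_2$ is to have an extra edge among these seven vertices, which either forces a $K_5-e$ (using \ref{uvnonadjnbd} applied to nonadjacent pairs in $\{z_1,z_2\}\cup C$) or contradicts the definitions of $Y_i$ and $Y_{i+1}$.

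For \cref{H3A1B2B3ancom}, if $x\in X_1$ is adjacent to $y\in Y_2$, then by \cref{H3YiYi+1ancom} $y$ is isolated in $G[Y_2\cup Y_3]$, and in particular $yy_3\notin E(G)$. I will then examine the induced graph on $\{x,y,v_1,v_2,v_3,y_3,z_1\}$: if $xy_3\in E(G)$ an $F_1$ appears (exactly as in the proof of \cref{G2A1B2B3ancom}); if $xy_3\notin E(G)$ then one of $\{x,y,v_3,v_2,y_3\}$ or $\{x,y,y_3,v_2,z_1\}$ induces a $P_5$. The ``Likewise'' statement is immediate by symmetry once $Y_1\neq\emptyset$ supplies the analogue of $y_2,y_3$ in the permuted role.

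For \cref{H3XiYcorel}, assume for contradiction that $X_i$ has a vertex $x$ with a neighbor $y\in Y_i$ and another vertex $x'$ with a neighbor $y'\in Y_{i+1}\cup Y_{i-1}$. I will show that $\{x,x',y,y'\}$ together with two of $v_1,v_2,v_3$ and a $z\in Z$ forces an $F_1$, an $F_2$, or a $P_5$, using \cref{H3YiYi+1ancom} to guarantee $yy'\notin E(G)$. The case $x=x'$ is the most constrained and will produce a quick $P_5$ of the form $\{y,x,y',v_{i\pm 1},z_1\}$.

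For \cref{H3lYzcorel}, let $t\in L$ with a neighbor in $Y_i$, say $y\in Y_i$. I will first show $\{t\}$ is complete to $\{y_2,y_3\}$ by forcing any missing edge to produce a $P_5$ through $v_1,v_2,v_3$ and $z_1,z_2$; then extend to all of $Y_{i+1}\cup Y_{i-1}$ via \ref{AiBi+1homset}; and finally to $Y_i$ and $Z$ using $P_5$-arguments with $y_2,y_3$ as pivots (exactly in the style of claim \cref{G2LZcom}). Once $\{t\}$ is complete to $Z$, \ref{lZcard} forces $|Z|\le 2$, so $Z=\{z_1,z_2\}$; and $G[Y_i]$, being sandwiched between a vertex of $L$ complete to it and the pair $v_{i+1},v_{i-1}$, becomes $(P_3,K_3)$-free by \ref{uvadjnbd}/\ref{uvnonadjnbd}, hence bipartite. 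The hard part here will be propagating the completeness all the way to $Y_i$ itself, where I will use that a would-be nonneighbor $y^*\in Y_i$ of $t$ combines with $y$ and $v_j$ to produce a $P_5$.

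For \cref{H3X2X3empLcorel}, assume $X_3=\emptyset$ (say) and $L$ is anticomplete to $Y$. Then by \cref{H3A1B2B3ancom}, \cref{G2A1B2B3ancom}-style arguments, and the fact that $L$ is anticomplete to $\{v_1,v_2,v_3\}\cup Y$, the set $C\cup Z$ becomes a clique cut-set separating the remaining components of $L$ from $Y_2\cup Y_3$, contradicting that $G$ has no clique cut-set. The main obstacle throughout is part~\cref{H3lYzcorel}, which bundles together three rather different conclusions (complete-to-$Y$, complete-to-$Z$, and bipartiteness of $G[Y_i]$) and requires the $F_1$- and $F_2$-freeness jointly with the $F_3$-anchor to suppress all possible missing edges.
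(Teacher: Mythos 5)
Your part \cref{H3YiYi+1ancom} is the paper's own one-line argument (the seven vertices simply induce an $F_2$; nothing needs to be ``avoided''), but several of the remaining steps would not survive being written out. In \cref{H3A1B2B3ancom}, the branch $xy_3\in E(G)$ cannot yield an $F_1$ ``exactly as in the proof of \cref{G2A1B2B3ancom}'': that $F_1$ requires $\{x,y,y_3\}$ to be a triangle, i.e.\ $yy_3\in E(G)$, whereas \cref{H3YiYi+1ancom} --- which you invoke in the same sentence --- forces $yy_3\notin E(G)$. The correct contradiction in that branch is the $P_5$ on $\{y,x,y_3,v_2,z\}$ (in that order) for a non-neighbour $z\in Z$ of $x$; you list this vertex set but attach it to the branch $xy_3\notin E(G)$, where it is not even a path. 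In \cref{H3lYzcorel}, the appeal to \ref{AiBi+1homset} to ``extend to all of $Y_{i+1}\cup Y_{i-1}$'' is a misuse: that property makes components of $G[X_i\cup L]$ homogeneous, so it tells you that a neighbour of $t$ in $Y_j$ sees all of $t$'s component --- it says nothing about the other vertices of $Y_j$. (Also, when $i\in\{2,3\}$ one of $y_2,y_3$ already lies in $Y_i$, so your order ``first $\{y_2,y_3\}$, then $Y_{i\pm1}$, then $Y_i$'' is circular.) The paper instead derives completeness to $Y\cup Z$ directly from \ref{F2azancomLcorel} together with \cref{H3YiYi+1ancom}: a nonedge between consecutive $Y$-classes forces $qt\in E(G)$, and the ``further'' clause of \ref{F2azancomLcorel} forces $tz\in E(G)$ because $Y_2,Y_3\neq\es$. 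Part \cref{H3XiYcorel} is only a promissory note: the main case $x\neq x'$ (``forces an $F_1$, an $F_2$, or a $P_5$'') names no configuration; the actual argument must first get $xy',x'y\notin E(G)$, then $xx'\notin E(G)$ via \ref{AiBi+1homset}, and only then the $P_5$ through the common neighbour $v_{i-1}$ of $Y_i$ and $Y_{i+1}$.

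The genuine gap is \cref{H3X2X3empLcorel}. Your proposed clique cut-set --- $C\cup Z$ separating the components of $L$ from $Y_2\cup Y_3$ --- fails on two counts. If $L=\es$, the hypothesis ``$L$ anticomplete to $Y$'' holds vacuously and there is nothing to separate, yet the conclusion of \cref{H3X2X3empLcorel} is then false, so a contradiction must still be manufactured; your argument produces none. And even when $L\neq\es$, say with $X_3=\es$ but $X_2\neq\es$, \cref{H3XiYcorel} permits $X_2$ to meet both $L$ and $Y_2$, giving a path from $L$ to $Y_2$ that avoids $C\cup Z$ entirely, so the claimed separation does not hold. The paper's route is different and robust: by \cref{H3XiYcorel} the surviving $X$-class misses one of $Y_2$ or $Y_3$; combining this with \cref{H3YiYi+1ancom}, \cref{H3A1B2B3ancom}, \ref{BZancom} and the assumed anticompleteness to $L$, that nonempty $Y_j$ sees nothing outside $Y_j\cup\{v_1,v_j\}$, so $C$ itself is a clique cut-set --- no reference to $L$ being nonempty is needed.
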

\begin{proof} $(i)$:~If there are adjacent vertices, say $y\in Y_{i}$ and $y'\in Y_{i+1}\cup Y_{i-1}$,  then $\{v_1,v_2,v_3,y,y',z_1,z_2\}$ induces an $F_2$. So \cref{H3YiYi+1ancom} holds. $\diamond$

\smallskip
\no{$(ii)$}:~The proof is similar to \cref{G2A1B2B3ancom} by using \cref{H3YiYi+1ancom}, and   we omit the details. $\diamond$

\smallskip
\no{$(iii)$}:~By \cref{H3A1B2B3ancom}, it is enough to show for $i\in \{2,3\}$. We will show for $i=2$. Suppose to the contrary that the assertion is not true. Then, by using \cref{H3A1B2B3ancom}, we may assume that there are vertices, say $x,x'\in X_2$, $y\in Y_3$ and $y'\in Y_2$ such that $xy,x'y'\in E(G)$. Also we may assume that $xz_1\notin E(G)$ (by \ref{aZcard}). Then since $\{y,x,y',v_3,z_1\}$ does not induce a $P_5$, we have $xy'\notin E(G)$; so $x\neq x'$. Likewise, $x'y\notin E(G)$.  Then $xx'\notin E(G)$ (by \ref{AiBi+1homset}), and then $\{x,y,v_1,y',x'\}$ induces a $P_5$ (by \cref{H3YiYi+1ancom}), a contradiction.
		So \cref{H3XiYcorel} holds. $\diamond$

\smallskip
\no{$(iv)$}:~Clearly the first assertion follows from \Cref{H3YiYi+1ancom} and \ref{F2azancomLcorel}. Now if a vertex in $L$, say $t$,    has a neighbor in $Y_i$, then since $\{t,v_{i+1}\}$ is complete to $Y_i$, $G[Y_i]$ is a bipartite graph (by \ref{BZancom} and \ref{uvnonadjnbd}), and since $\{t\}$ is complete to $Z$, we have $Z=\{z_1,z_2\}$ (by \ref{lZcard}). This proves \cref{H3lYzcorel}. $\diamond$

\smallskip
\no{$(v)$}:~We may assume that $X_2=\es$. Suppose to the contrary that $Y$ is anticomplete to $L$. By \cref{H3XiYcorel}, $X_3$ is anticomplete to one of $Y_2$ or $Y_3$. Then by \cref{H3YiYi+1ancom} and \cref{H3A1B2B3ancom}, either $Y_2$ is anticomplete to $V(G)\sm (Y_2\cup \{v_1,v_3\})$ or $Y_3$ is anticomplete to $V(G)\sm (Y_3\cup \{v_1,v_2\})$. But  now  $C$ is a clique cut-set in $G$ separating one of $Y_2$ or $Y_3$ with the rest of the vertices in $G$, a contradiction. So $L$ is not anticomplete to $Y$. This proves \cref{H3X2X3empLcorel}.
\end{proof}

\begin{lemma}\label{H3-case1}
			If there is an $i\in \{1,2,3\}$ such that $X_i$ is not anticomplete to $Y_{i+1}\cup Y_{i-1}$, then $\chi(G)\leq 5$.
		\end{lemma}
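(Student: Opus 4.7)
The plan is to follow the template of the proof of \cref{cont-H2}: reduce the hypothesis to a canonical configuration, extract strong structural constraints on the sets $X_j$, $Y_j$, $Z$, $L$, and then exhibit an explicit partition of $V(G)$ into five stable sets. First, by \cref{H3A1B2B3ancom}, $X_1$ is always anticomplete to $Y_2\cup Y_3$, so the offending index $i$ lies in $\{2,3\}$; by the symmetry exchanging $v_2$ and $v_3$, I may assume $i=2$. The second part of \cref{H3A1B2B3ancom} then forces $Y_1=\es$ (else $X_2$ would be anticomplete to $Y_1\cup Y_3$). Hence there exist $x\in X_2$ and $y\in Y_3$ with $xy\in E(G)$, and \cref{H3XiYcorel} yields $X_2$ anticomplete to $Y_2$. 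Applying \cref{H3XiYcorel} to $X_3$ gives the dichotomy ``$X_3$ anticomplete to $Y_2$'' or ``$X_3$ anticomplete to $Y_3$'', which branches into two symmetric subcases treated by the same machinery.

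Next, I would establish a sequence of structural claims in the spirit of those used in \cref{cont-H2}. The most important is that $G[X_2\cup Y_3]$ and $G[X_3\cup Y_2]$ are $K_3$-free: a triangle in $X_2\cup Y_3$ would be joined to $\{v_2\}$ producing a $K_5-e$, and its external adjacencies to $X_1, X_3, Y_1, Y_2, Z, L$ can be excluded via \ref{uvadjnbd}--\ref{4setsclique}, \ref{AiBi+1homset}, \ref{BiBi+1bigcomancom}, \cref{H3YiYi+1ancom} and \cref{H3A1B2B3ancom}, and the failure of each exclusion would expose a clique cut-set, contradicting our assumption. Similarly, $G[X_1]$ and $G[Y_i]$ are controlled via \ref{uvadjnbd} and \ref{F2azancomLcorel}. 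The interaction with $L$ is handled by \cref{H3lYzcorel}: either no vertex of $L$ meets $Y$, in which case \cref{H3X2X3empLcorel} forces both $X_2,X_3\neq\es$ and the connectivity argument via the absence of clique cut-sets pins down $L$; or some vertex of $L$ meets $Y_i$, yielding $Z=\{z_1,z_2\}$, $G[Y_i]$ bipartite, and that this $L$-vertex is complete to $Y\cup Z$.

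With these facts in hand, I apply \cref{P5K3prop}(a) to the $K_3$-free subgraphs $G[X_2\cup Y_3]$ and $G[X_3\cup Y_2]$: each component is bipartite or a $5$-ring, from which one extracts a maximum stable set per $5$-ring-component and then a bipartition of the remainder, producing at most three stable pieces of each subgraph. Together with a maximal stable set in each $Y_j$ (whose complement is stable by \cref{H3YiYi+1ancom} and the $K_3$-freeness above) and the corresponding decomposition of $L$, one then defines five stable sets $S_1,\ldots,S_5$, each containing a distinct vertex of $\{v_1,v_2,v_3,z_1,z_2\}$ as anchor, distributing the remaining vertices according to the extracted bipartitions. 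Stability of each $S_j$ is verified using \ref{Zclq}, \ref{BZancom}, \ref{AiBi+1homset}, \ref{K4XZhomset}, and the structural claims above.

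The main obstacle, as in \cref{cont-H2}, is the bookkeeping in verifying that all five sets are stable: the interaction between $L$ and the rest of the graph is intricate (because $L$ may contain big-components and even $5$-ring-components), and one must carefully route $L$-vertices into the $S_j$ so that the anticompleteness relations with $X$, $Y$, and $Z$ are respected. The branching between the ``$X_3$ anticomplete to $Y_2$'' and ``$X_3$ anticomplete to $Y_3$'' subcases, together with the two options coming from \cref{H3lYzcorel}, will likely force a short case split at the coloring step, but in each case the five-set partition should go through essentially uniformly.
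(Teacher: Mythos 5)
There is a genuine gap. The linchpin of the paper's argument — which your plan never reaches — is that under the hypothesis (normalized to $i=2$, with $x\in X_2$ adjacent to $q\in Y_3$, $Y_1=\es$ and $X_2$ anticomplete to $Y_2$) one can prove $X_3=\es$ outright, and shortly afterwards $X_1=\es$ as well. The paper gets $X_3=\es$ by a short $P_5$/$F_2$ analysis, then invokes \cref{H3X2X3empLcorel} to produce a vertex $t\in L$ meeting $Y$, whence \cref{H3lYzcorel} gives $t$ complete to $Y\cup Z$, $Z=\{z_1,z_2\}$, and a cascade showing $X_2$ and $Y_3$ are stable sets and $X_2$ is complete to $Y_3$ and anticomplete to $Z$; finally $L$ is shown complete to $Y$. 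The coloring is then trivial: $\chi(G[X\cup Y\cup Z])\le 2$ and $\chi(G[C\cup L])\le 3$, giving $5$. Your proposal instead keeps $X_3$ (and $X_1$) alive, branches on which of $Y_2,Y_3$ the set $X_3$ avoids, and defers to an unexecuted five-stable-set partition; the case ``no vertex of $L$ meets $Y$'' that you treat as live is in fact impossible once $X_3=\es$ is known.

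Two further concrete problems. First, your key structural claim that $G[X_2\cup Y_3]$ is $K_3$-free is not justified as stated: a triangle in $X_2\cup Y_3$ together with $v_2$ yields only a $K_4$, not a $K_5-e$, and the analogous claim \ref{G2X2Y3str} from \cref{cont-H2} was proved under the hypothesis that $G$ \emph{contains} an $F_2$ (so that $y_2,y_3$ and their adjacencies are available), whereas here $G$ is $F_2$-free and the ambient structure is different; you cannot import it. Second, the verification you describe as ``the main obstacle'' — routing $L$ through the five stable sets and checking stability — is exactly the content of the proof and is left entirely undone; since the correct structure (after $X_1=X_3=\es$) makes a five-set partition unnecessary, the plan as written would not assemble into a proof without first discovering the emptiness of $X_1$ and $X_3$.
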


\begin{proof}
		By \cref{H3A1B2B3ancom}, we may assume that $i=2$, and  there are adjacent vertices, say $p\in X_2$ and $q\in Y_1\cup Y_3$. Again, by \cref{H3A1B2B3ancom}, we  have $Y_1=\es$ and so $q\in Y_3$. Also, $X_2$ is anticomplete to $Y_2$ (by \cref{H3XiYcorel}).  Moreover, we  claim that:
\begin{equation} \label{H3X3emp}
			\mbox{$X_3=\es$.}
  \end{equation}

  \no{\it Proof of $(\ref{H3X3emp})$}:~Suppose not, and let $x\in X_3$. By \ref{aZcard}, we may assume that $xz_1\notin E(G)$. Since $\{v_1,v_2,v_3, z_1,z_2,$ $ p,q\}$ does not induce an $F_2$, $pz_1\notin E(G)$. Then   since $\{p,q,v_1,v_3,x\}$ or  $\{p,q,x,v_3,z_1\}$ does not induce a $P_5$, we have $px,qx\in E(G)$. Thus by \cref{H3XiYcorel},   $xy_2\notin E(G)$. But then $\{x,p,v_2,v_1,y_2\}$ induces a $P_5$, a contradiction. So (\ref{H3X3emp}) holds. $\Diamond$
	
\smallskip	
		By (\ref{H3X3emp}) and by \cref{H3X2X3empLcorel}, there is a vertex, say $t\in L$,   that   has a neighbor in $Y.$  Then by \cref{H3lYzcorel}, $\{t\}$ is complete to $Y,$ $G[Y_2]$ is a bipartite graph, and $Z=\{z_1,z_2\}$.  Then since for any $x\in X_2$, $\{t,y_2,v_3,v_2,x\}$ does not induce a $P_5$,   $\{t\}$ is complete to $X_2$. So $X_2$ is complete to $Y_3$ (by \ref{AiBi+1homset}). Now for any $x'\in X_2$ which has a neighbor in $Z$, say $z_1$,  since  $\{v_1,v_2,z_1,v_3,z_2, x',q\}$ does not induce  an $F_2$ (by \ref{BZancom} and \ref{aZcard}), we see that $X_2$ is anticomplete to $Z$. Thus if there are adjacent vertices, say   $u$ and $v$ in $X_2$, then $\{u,v,y_3,v_2,t\}$  induces a $K_5-e$; so $X_2$ is a stable set. Likewise, $Y_3$ is a stable set. Also  by \ref{AiBi+1homset} and \cref{H3A1B2B3ancom}, $\{t\}$ is anticomplete to $X_1$, and hence $X_1=\es$ (by \ref{Atcorel}). So  we conclude that $X\cup Y\cup Z = X_2\cup Y_2\cup Y_3\cup \{z_1,z_2\}$, and $\chi(G[X\cup Y\cup Z])\leq 2$. Next we claim that:
\begin{equation} \label{H3LYcom}
			\mbox{\it $L$ is complete to $Y.$}
		\end{equation}

\vspace{-0.1cm}
		\no{\it Proof of $(\ref{H3LYcom})$}.~Suppose to the contrary that  there is a component of $G[L]$, say $Q$,  such that $V(Q)$ is not complete to $Y.$ Thus by \ref{AiBi+1homset} and \cref{H3lYzcorel}, $V(Q)$ is anticomplete to $Y$. So by \ref{AiBi+1homset} and \ref{Atcorel}, $V(Q)$ is anticomplete to $X\cup Y$, and hence $Z$ is a clique cut-set  in $G$ separating $C$ and $V(Q)$, a contradiction.   So (\ref{H3LYcom}) holds. $\Diamond$
		
\smallskip
		By (\ref{H3LYcom}), $L$ is complete to $\{y_2,y_3\}$. So  by \ref{uvnonadjnbd} and Theorem~\ref{P5K3prop}:$(a)$, we conclude that $\chi(G[C\cup L])\leq 3$, and hence $\chi(G)\leq 5$. This proves \cref{H3-case1}.
\end{proof}

  Now we prove the main theorem of this section, and is given below.

\begin{theorem}\label{caseH3}
		Let $G$ be a connected $(P_5, K_5-e,F_1,F_2)$-free graph. If $G$ contains an  $F_3$, then either $G$ has a clique cut-set or $G$ is the complement of a bipartite graph or $\chi(G)= 5$.	
	\end{theorem}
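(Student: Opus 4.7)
The plan is as follows. Since $C\cup\{z_1,z_2\}$ is a clique, $\omega(G)\geq 5$, so it suffices either to find a clique cut-set, to show that $G$ is the complement of a bipartite graph, or to exhibit a proper $5$-coloring. By \cref{H3-case1}, I may assume that $X_i$ is anticomplete to $Y_{i+1}\cup Y_{i-1}$ for every $i\in\{1,2,3\}$. Combined with \cref{H3YiYi+1ancom}, \ref{BZancom} and \ref{Zclq}, this forces the external neighbors of each $Y_i$ to lie in $X_i\cup L\cup\{v_{i+1},v_{i-1}\}$, and $Y$ to be anticomplete to $Z$. I assume in addition that $G$ has no clique cut-set and is not the complement of a bipartite graph, and split on whether some vertex of $L$ has a neighbor in $Y$.

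In the first case, \cref{H3lYzcorel} yields $Z=\{z_1,z_2\}$, $G[Y_i]$ bipartite for each $i$, and some $t\in L$ complete to $Y\cup Z$. A short $P_5$-chase through $t$, the $v_j$'s and the $z_k$'s shows additionally that $\{t\}$ is complete to $X$ (otherwise a $P_5$ of the form $t$--$y_j$--$v_k$--$v_\ell$--$x$ or $t$--$y_j$--$x$--$v_\ell$--$z_k$ emerges), and that every other vertex of $L$ is complete to $\{z_1,z_2\}$; by \ref{uvadjnbd}, $G[L]$ is $P_3$-free, and by \ref{uvnonadjnbd} combined with the bipartiteness of $G[Y_i]$, $G[L]$ is also $K_3$-free, hence bipartite. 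Applying \ref{AiBi+1homset}, \ref{Atcorel} and \ref{4setsclique} to control how the components of $G[L]$ attach to $X$, either a clique cut-set emerges or $V(G)$ splits into two cliques (namely $C\cup Z$ and $X\cup Y\cup L$), contradicting the standing hypotheses. What then remains is to build five stable sets by placing $v_i\in S_i$ for $i=1,2,3$ and $z_k\in S_{3+k}$, and distributing the bipartition classes of $G[Y_i]$, stable-set slices of $G[X_i]$ (controlled by \cref{K4XiYihomset} and \ref{K4XZhomset}), and a bipartition of $G[L]$, consistently with the already-established anticomplete relations.

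In the second case, $L$ is anticomplete to $Y$, so by \cref{H3X2X3empLcorel} both $X_2$ and $X_3$ are nonempty. I first show $Y_1=\emptyset$: any $y\in Y_1$ together with vertices of $X_2\cup X_3$ would, via \ref{F2azancomLcorel} and $P_5$-freeness, force adjacencies ruled out either by \cref{H3XiYcorel} or by the very $X_i$-to-$Y_{i\pm 1}$ nonedges we are now assuming. Then \ref{Atcorel}, \ref{AiBi+1homset} and \ref{4setsclique} heavily restrict the interaction of $X$ and $L$: either a clique cut-set such as $C\cup(Z\sm\{z\})$ appears (in the style of \cref{G2Zz1z2}), or the graph decomposes into $C\cup X\cup Y_2\cup Y_3\cup Z\cup L$ with each $Y_i$ attached to $X_i$ as governed by \cref{K4XiYihomset}, at which point an explicit $5$-coloring can be read off in the style of the closing argument of \cref{cont-H2}.

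The main obstacle will be the first case. Because a component of $L$ can simultaneously carry edges to $Y$ and to $X$, the $P_5$-chasing needed to show that every such component is either totally complete to the appropriate portion of $X\cup Y$ or cleanly separated by a clique cut-set is delicate, and mirrors the proof of \cref{G2LZcom}. Once that structural claim is in hand, partitioning into five stable sets is routine from the accumulated anticomplete relations together with \cref{P5K3prop}.
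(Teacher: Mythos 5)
There is a genuine error at the heart of your first case. After invoking \cref{H3-case1} you are working under the standing assumption that each $X_i$ is anticomplete to $Y_{i+1}\cup Y_{i-1}$, and you then assert that a vertex $t\in L$ that is complete to $Y\cup Z$ must also be complete to $X$. The opposite is true. If $t$ were adjacent to some $x\in X_2$, then $t$ and $x$ would lie in a common component of $G[X_2\cup L]$, which by \ref{AiBi+1homset} is a homogeneous set with respect to $Y_3$; but $ty_3\in E(G)$ while $xy_3\notin E(G)$, a contradiction. So such a $t$ is \emph{anticomplete} to $X$ --- this is exactly the paper's Claim \ref{H3XLancom}, which shows $X$ is anticomplete to $L$ in this regime. (Note also that your second witnessing path $t$--$y_j$--$x$--$v_\ell$--$z_k$ is never induced, because $t$ is complete to $Z$ by \cref{H3lYzcorel}.) Since $X$ is anticomplete to $L$ and $L$ is complete to $Y\cup Z$ (Claim \ref{H3LK3free}), your plan to finish the first case by producing either a clique cut-set or a partition of $V(G)$ into the two cliques $C\cup Z$ and $X\cup Y\cup L$ cannot work: $X\cup Y\cup L$ is typically very far from a clique here, and the correct outcome is a $5$-coloring, which requires the detailed structure the paper extracts in Claims \ref{H3subcaseX2X3emp}--\ref{H3YK3free} ($X_2$ complete to $X_3$, the set $M$ of vertices of $X_1$ with neighbors in $X_2$, each vertex of $Z$ complete to exactly one of $X_2$ and $X_3$, $|Z|=2$, $Y_1=\es$, and $X_1\neq\es\Rightarrow L=\es$). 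None of this is present in your outline.

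Your case split is also misaligned with the actual structure. Once $X$ is anticomplete to $L$, every component of $G[L]$ must have a neighbor in $Y$ (otherwise $C\cup Z$ is a clique cut-set isolating it), so your ``second case'' ($L$ anticomplete to $Y$) collapses to $L=\es$ and does not capture the genuinely hard configuration. The dichotomy the paper uses instead is whether some $X_{j+1}\cup X_{j-1}$ is empty; the nonempty branch is where the bulk of the work lies, and your sketch defers it entirely to ``an explicit $5$-coloring can be read off,'' which is not justified without the intermediate claims above.
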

	\begin{proof}
		Let $G$ be a connected $(P_5,K_5-e,F_1,F_2)$-free graph. Suppose that $G$ contains an induced $F_3$ with vertices and edges as shown in Figure~\ref{fig-F123}. Let $C := \{v_1,v_2,v_3\}$. Then, with respect to $C$, we define the sets $X,Y,Z$ and $L$ as in Section~\ref{genprop}, and we use the properties in Section~\ref{genprop}. Clearly  $y_2\in Y_2$ and $y_3\in Y_3$, and $z_1,z_2\in Z$, so that $Y_2$, $Y_3$ and $Z$ are nonempty. Recall that $C\cup Z$ is a clique (by \ref{Zclq}), and that $Y$ is anticomplete to $Z$ (by \ref{BZancom}). We may assume that $G$ has no clique cut-set, and that $G$ is not the complement of a bipartite graph. Now since $\chi(F_3)=5$, we have $\chi(G)\geq 5$, it is enough to show that $\chi(G)\leq 5$.    By  \cref{H3-case1}, we may assume that for each $i\in \{1,2,3\}$, $X_i$ is anticomplete to $Y_{i+1}\cup Y_{i-1}$. Next we claim the following:
		\begin{claim}\label{H3XLancom}
			$X$ is anticomplete to $L$.
		\end{claim}
	\no{\it Proof of \ref{H3XLancom}}.~Suppose to the contrary there are adjacent vertices, say $x\in X$ and $t\in L$. If $x\in X_2$, then by \ref{AiBi+1homset}, $ty_3\notin E(G)$, so  $\{t\}$ is anticomplete to $Y$ (by \cref{H3lYzcorel}), and then one of  $\{t,x,y_2,v_1,y_3\}$,  $\{t,x,v_2,v_3,y_2\}$ induces a $P_5$; so $x\notin X_2$. Likewise, $x\notin X_3$. Thus $x\in X_1$. So by \ref{Atcorel}, $X_2\cup X_3=\es$. Now let $L':=\{t\in L\mid t \mbox{ has a neighbor in } X_1\}$. Then $L'$ is anticomplete to $Y_2\cup Y_3 \cup (L\sm L')$  (by \ref{AiBi+1homset}), and for any $y'\in Y_1$, since one of $\{t,x,v_1,v_3,y'\}$ or $\{t,x,y',v_3,y_2\}$ does not induce a $P_5$, we have  $Y_1=\es$. But, then $C\cup Z$ is a clique cut-set separating $X_1\cup L'$ and the rest of the vertices in $G$, a contradiction. So \ref{H3XLancom} holds. $\Diamond$
		\begin{claim}\label{H3LK3free}
			$L$ is complete to $Y\cup Z$, and $G[L]$ is a bipartite graph.
		\end{claim}
		\no{\it Proof of \ref{H3LK3free}}.~Since  $C\cup Z$ is not a clique cut-set in $G$, each component of $G[L]$  is not anticomplete to $Y.$ So by \cref{H3lYzcorel} and \ref{AiBi+1homset},  $L$ is complete to $Y\cup Z$. In particular, $L$ is complete to $\{y_2,y_3,z_1,z_2\}$. Now the second assertion follows from \ref{uvadjnbd} and \ref{uvnonadjnbd}.	This proves \ref{H3LK3free}. $\Diamond$

\smallskip
To prove the our theorem,  we first suppose that there is an index $j\in \{1,2,3\}$ such  that $X_{j+1}\cup X_{j-1}=\es$.
 Then by \cref{H3X2X3empLcorel},  $L\neq \es$ and let $t\in L$. Thus by \cref{H3lYzcorel} and \ref{H3LK3free},  $G[Y_i]$ is a bipartite graph for each $i$, and $Z=\{z_1,z_2\}$. For $i\in \{1,2,3\}$, let $Y_i'$ be a maximal stable set in $G[Y_i]$. Next  if $Y_j =\es$, then since $X_j$ is anticomplete to $Y_{j+1}\cup Y_{j-1}\cup L$ (by \ref{H3XLancom}), $C\cup Z$ is a clique cut-set in $G$ separating $X_j$ and the rest of the vertices, a contradiction; so $Y_j \neq \es$.  Also for any $x\in X_j$ and $y\in Y_j$,  since $\{x,v_j,v_{j+1},y,t\}$ does not induce a $P_5$ (by \ref{H3LK3free}), $X_j$ is complete to $Y_j$. Then since $X_j$ is complete to $Y_j\cup \{v_j\}$, $G[X_j]$ is $K_3$-free (by \ref{uvnonadjnbd}).  Now by using \cref{P5K3prop}:$(a)$, we pick a maximum stable set from each $5$-ring-component of $G[X_j]$ (if exists), and let $S$ be the union of these sets; so $\chi(G[X_j\sm S])\leq 2$. Also $S\cup (Y_j\sm Y_j')\cup \{z_1\}$ is a stable set (by \cref{K4XiYihomset} and \cref{K4XZhomset}).
		Then from  \ref{H3XLancom} and \ref{H3LK3free}, we have $\chi(G[(X_j\sm S)\cup L\cup \{v_{j+1},v_{j-1}\}])= 2$, and from \ref{BZancom} and \cref{H3YiYi+1ancom}, we have $\chi(G[S\cup (Y\sm (Y_1'\cup Y_2'\cup Y_3'))\cup \{z_1\}])=1$ and $\chi(G[Y_1'\cup Y_2'\cup Y_3'\cup \{z_2\}])=1$.
		So we conclude that $\chi(G) =  \chi(G[C\cup X_j\cup Y\cup \{z_1,z_2\}\cup L])\leq 5$, and we are done. So we may assume that for each $j\in \{1,2,3\}$, either $X_{j+1}\neq \es$ or $X_{j-1}\neq \es$.		To proceed further, we let $M:=\{x\in X_1\mid N(x)\cap X_2\neq \es\}$. Then:
		\begin{claim}\label{H3subcaseX2X3emp}
		The following hold:	(i) $X_2$ and $X_3$ are non-empty.   (ii) $X_2$ is complete to $X_3$. (iii) $X_2$ is complete to $Y_2$. Likewise, $X_3$ is complete to $Y_3$.
		\end{claim}
		\no{\it Proof of \ref{H3subcaseX2X3emp}}.~$(i)$:~Suppose not, and let $X_3=\es$. Then by our assumption $X_1,X_2\neq \es$. By \cref{H3X2X3empLcorel}, $L\neq \es$ and let $t\in L$.  Since for any $x\in X_1$ and $x'\in X_2$, $\{x,x',v_2,y_3,t\}$ does not induce a $P_5$ (by \ref{H3LK3free}),  $X_1$ is anticomplete to $X_2$. Now by \ref{H3XLancom}, since $C\cup Z$ is not a clique cut-set in $G$,  $X_1$ is not anticomplete to $Y_1$, and $X_2$ is not anticomplete to $Y_2$. Let $p\in X_1$, $q\in X_2$, $r\in Y_1$ and $s\in Y_2$ be such that $pr,qs\in E(G)$. Then $\{p,r,v_3,s,q\}$ induces a $P_5$ (by \cref{H3YiYi+1ancom}). So  $(i)$ holds. $\diamond$

\smallskip
\no{$(ii)$}:~Suppose to the contrary there are nonadjacent vertices, say $p\in X_2$ and $q\in X_3$. Then for any $u\in Y_2$ and $v\in Y_3$,  since $\{p,u,v_1,v,q\}$ does not induce a $P_5$ (by \cref{H3YiYi+1ancom}),  we see that either $\{p\}$ is anticomplete to $Y_2$ or $\{q\}$ is anticomplete to $Y_3$. We may assume that $\{p\}$ is anticomplete to $Y_2$.  Then for any $x\in X_1$,  since $\{x,p,v_2,v_3,y_2\}$ does not induce a $P_5$ (by \cref{H3A1B2B3ancom}),   $\{p\}$ is anticomplete to $X_1$. Likewise, $\{p\}$ is anticomplete to $X_3$.  But now if  $Q$ is the component of $G[X_2]$  that contains $p$, then $C\cup Z$ is a clique cut-set  separating  $V(Q)$ and the rest of the vertices in $G$ (by \ref{AiBi+1homset} and \ref{H3XLancom}), a contradiction. So $(ii)$ holds. $\diamond$

\smallskip
\no{$(iii)$}:~If there are nonadjacent vertices, say $x\in X_2$ and $y\in Y_2$, then for any $x_3\in X_3$ (such a vertex exists, by $(i)$), $\{x_3,x,v_2,v_1,y\}$ induces  a $P_5$ (by $(ii)$); so  $X_2$ is complete to $Y_2$. So  $(iii)$ holds.
$\Diamond$

\begin{claim}\label{H3X1MX3com}
		The following hold:	(i) $X_1\sm M$ is complete to $X_3$. (ii) Each vertex in $Z$ is complete to exactly one of $X_2$ and $X_3$. (iii) Each vertex in $Z$ is complete to exactly one of $X_3$ and $X_1\sm M$.
		\end{claim}
		\no{\it Proof of \ref{H3X1MX3com}}.~$(i)$:~If there are nonadjacent vertices, say $x\in X_1\sm M$ and $x'\in X_3$, then for any $x_2\in X_2$, $\{x',x_2,v_2,v_1,x\}$ induces a $P_5$ (by \cref{H3subcaseX2X3emp}:$(i)$ and \cref{H3subcaseX2X3emp}:$(ii)$). So $(i)$ holds. $\diamond$

\smallskip
\no{$(ii)$}:~Let $z\in Z$ and let $z\neq z_1$. If $\{z\}$ is complete to both $X_2$ and $X_3$, then there are vertices, say $x_2\in X_2$ and $x_3\in X_3$ such that $zx_2,zx_3\in E(G)$, and then $\{z,v_2,v_3,v_1,z_1,x_2,x_3\}$ induces an $F_2$ (by \ref{H3subcaseX2X3emp}:$(ii)$); so $\{z\}$ is not complete to  $X_2$ or $X_3$.  Suppose that there is a  vertex, say   $x\in X_2$ such that $zx\notin E(G)$.  Now if there is a vertex, say $x'\in X_3$  such that $x'z\notin E(G)$, then   $\{x,x',y_3,v_1,z\}$ induces a $P_5$ (by \ref{H3subcaseX2X3emp}).  So $(ii)$ holds.  $\diamond$

\smallskip
\no{$(iii)$}:~This follows from a similar argument in $(ii)$ by using $(i)$ instead of \ref{H3subcaseX2X3emp}.   $\Diamond$

\smallskip
	 By \ref{H3subcaseX2X3emp}:$(i)$, we let $x_2\in X_2$ and $x_3\in X_3$.	From \ref{H3X1MX3com}:$(ii)$, we may assume that $\{z_1\}$ is complete to $X_2$. Then $\{z_2\}$ is anticomplete to $X_2$ (by \ref{aZcard}). So $\{z_2\}$ is complete to $X_3$ (by \ref{H3X1MX3com}:$(ii)$) and  hence anticomplete to $X_1\sm M$ (by \ref{H3X1MX3com}:$(iii)$). Next we claim that:

\begin{claim}\label{H3case1X1X2corel}
			  $M$ is complete to $\{z_2\} \cup X_2$,  and  is anticomplete to $\{z_1\}\cup X_3$.
		\end{claim}
		\no{\it Proof of \ref{H3case1X1X2corel}}.~Let $m\in M$, and  let $x\in X_2$ be a neighbour of $m$. Then since $\{m,x,y_2,v_3,z_2\}$ does not induce a $P_5$ (by \ref{H3subcaseX2X3emp}:($iii$)), $mz_2\in E(G)$; so   $M$ is complete to $\{z_2\}$. Thus $M$ is anticomplete to $\{z_1\}$ (by \ref{aZcard}).
  Now if there are adjacent vertices, say $p\in X_3$ and $q\in M$, then $\{z_2,v_1,v_3,z_1,v_2,p,q\}$ induces an $F_2$, a contradiction; so $M$ is anticomplete to $X_3$. Next  if there are nonadjacent vertices, say $u\in M$ and $v\in X_2$, then  $\{x_3,v,v_2,v_1,u\}$ induces $P_5$ (by \ref{H3subcaseX2X3emp}:(ii) and since $ux_3\notin E(G)$ by the previous argument). This proves \ref{H3case1X1X2corel}.   $\Diamond$

		\begin{claim}\label{H3zcard}
			Then following hold: (i) $Z=\{z_1,z_2\}$. (ii) $Y_1=\es$. (iii) If $X_1\neq \es$, then $L=\es$.
		\end{claim}
		\no{\it Proof of \ref{H3zcard}}.~$(i)$:~If there is a vertex, say $z\in Z\sm \{z_1,z_2\}$, then $\{z\}$ is complete to exactly one of $X_2$ and $X_3$ (by \ref{H3X1MX3com}:$(ii)$) which is  a contradiction to \ref{aZcard}. This proves $(i)$. $\diamond$

\smallskip		
\no{$(ii)$}:~Suppose not. Then since $C$ is not a clique cut-set separating $Y_1$ and the rest of the vertices, there are adjacent vertices, say $y\in Y_1$ and $p\in X_1\cup L$.  Then by \ref{H3XLancom}, \ref{H3subcaseX2X3emp} and \ref{H3case1X1X2corel}, we observe the following: If $p\in L$, then $\{x_2,x_3,v_3,y,p\}$ induces $P_5$, and if $p\in M$, then $\{y,p,x_2,x_3,y_3\}$ induces a $P_5$, and if $p\in X_1\sm M$, then $\{y,p,x_3,x_2,y_2\}$ induces a $P_5$,  which are contradictions. So $(ii)$ holds.   $\diamond$

\smallskip		
\no{$(iii)$}:~Otherwise, for any $t\in L$ and $p\in X_1$, from \ref{H3LK3free} and \ref{H3case1X1X2corel}, $\{p,x_2,v_2,y_3,t\}$ induces a $P_5$ (if $p\in M$), and from \ref{H3LK3free} and \ref{H3X1MX3com}:$(i)$,  $\{p,x_3,v_3,y_2,t\}$ induces  a $P_5$ (if $p\in X_1\sm M$). $\Diamond$

\begin{claim}\label{H3YK3free}
			  $G[Y_2]$, $G[Y_3]$, $G[X_2]$, $G[X_3]$, $G[M]$ and $G[X_1\sm M]$   are bipartite.
		\end{claim}
		\no{\it Proof of \ref{H3YK3free}}.~We show that, up to symmetry,  $G[Y_2]$,   $G[X_2]$ and    $G[M]$   are bipartite. Recall that  $Y_2$ is complete to $\{x_2,v_1,v_3\}$ (by  \ref{H3subcaseX2X3emp}:$(iii)$),  $X_2$ is complete to $\{x_3,v_2,z_1\}$  (by \ref{H3subcaseX2X3emp}:$(ii)$), and $M$ is complete to $\{x_2,v_1,z_2\}$ (by \ref{H3case1X1X2corel}).  Now the proof follows from \ref{uvadjnbd} and \ref{uvnonadjnbd}.  $\Diamond$

\smallskip
By above claims, since $G[M\cup X_3\cup Y_2\cup \{v_2,z_1\}]$ and  $G[(X_1\sm M)\cup X_2\cup Y_3\cup \{v_3,z_2\}]$ are bipartite, if $X_1\neq \es$, then $\chi(G)\leq 5$ (by \ref{H3zcard}). So we may assume that $X_1=\es$. For $j\in \{2,3\}$, let $Y_j'$, $X_j'$ and $L'$ respectively denote a maximal stable set of   $G[Y_j]$, $G[X_j]$ and $L$. Then we define the following sets: $S_1:=X_2'\cup Y_3'\cup \{z_2\}$, $S_2:=X_3'\cup Y_2'\cup \{v_2\}$, $S_3:=(X_2\sm X_2')\cup L'\cup \{v_3\}$, $S_4:=(X_3\sm X_3')\cup (L\sm L')\cup \{v_1\}$  and $S_5:=(Y_2\sm Y_2')\cup (Y_3\sm Y_3')\cup \{z_1\}$. Then by above arguments,  $S_i$'s are stable sets whose union is $V(G)$. So $\chi(G)\leq 5$. This completes the proof of \cref{caseH3}.
	\end{proof}

 \subsection{($P_5,K_5-e,F_1,F_2,F_3$)-free graphs with $\omega\geq 4$} \label{mainthm-f123free}

In this section, we prove \cref{main-thm} assuming that $G$ is $(F_1,F_2,F_3)$-free (see Figure~\ref{fig-F123}).

\subsubsection{($P_5,K_5-e, F_1, F_2, F_3$)-free graphs with $\omega\geq 5$}

\begin{theorem}\label{mainthm-w5}
Let $G$ be a connected ($P_5,K_5-e,F_1,F_2,F_3$)-free graph  with $\omega(G)\geq 5$.  Then either $G$ is   the complement of a bipartite graph or $G$ has a clique cut-set or   $\chi(G)\leq 6$.
\end{theorem}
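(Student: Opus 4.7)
The plan is to argue by contradiction: assume $G$ has no clique cut-set and is not the complement of a bipartite graph, and aim to show $\chi(G)\leq 6$.

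First, because $\omega(G)\geq 5=2+3$, I apply \cref{HVNfreeper} with $t=2$. Since both structural conclusions are ruled out by assumption, $G$ must contain an induced $\mathbb{H}_2$, and in particular $G$ contains a $K_5$. Fix a triangle $C=\{v_1,v_2,v_3\}$ inside this $K_5$ together with two further $K_5$-vertices $z_1,z_2$, and set up the decomposition $V(G)=C\cup X\cup Y\cup Z\cup L$ of Section~\ref{genprop}; then $z_1,z_2\in Z$, so $|Z|\geq 2$, and the basic properties \ref{Zclq}--\ref{F2azancomLcorel} are available.

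Second, I exploit $(F_2,F_3)$-freeness. Because $|Z|\geq 2$, for any $y\in Y_i$ and $y'\in Y_j$ with $i\neq j$ the set $C\cup\{z_1,z_2,y,y'\}$ induces an $F_2$ (when $yy'\in E(G)$) or an $F_3$ (when $yy'\notin E(G)$), using \ref{Zclq} and \ref{BZancom} to confirm the non-adjacencies of $z_1,z_2$ to $y,y'$. Hence at most one of $Y_1,Y_2,Y_3$ is non-empty; after permuting $C$ I may therefore assume $Y_2=Y_3=\es$.

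Third, I use the remaining properties of Section~\ref{genprop} together with $F_1$-freeness to pin down the structure of $X_1,X_2,X_3,Y_1,L$. By \ref{BZancom} and \ref{uvnonadjnbd}, $G[Y_1]$ is $(P_3,K_3)$-free, hence bipartite; each $G[X_i]$ is $P_5$-free and, via \ref{aZcard} combined with $(K_5-e)$-freeness, any triangle of $G[X_i]$ is anticomplete to $Z$, so by \cref{P5K3prop}(a) each component of $G[X_i]$ is either bipartite or a $5$-ring, and \cref{K4XZhomset} enforces strong homogeneity against $Z$; analogous control of $L$ follows from \ref{lZcard}, \ref{Atcorel}, and \ref{AiBi+1homset}. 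I would then show that whenever $\omega(G)\geq 7$, this restricted structure must contain a clique cut-set, by adapting the endgame of the proof of \cref{HVNfreeper}; hence $\omega(G)\in\{5,6\}$ and $|Z|\in\{2,3\}$.

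Finally, with $\omega(G)\leq 6$, I would exhibit an explicit $6$-colouring in the same spirit as the proofs of \cref{cont-HG}, \cref{cont-H2}, and \cref{caseH3}: take a maximum stable set from each $5$-ring component of every $X_i$, $2$-colour the remaining bipartite components of $G[X_i]$, $G[Y_1]$, and $G[L]$, and pair these colour classes with appropriate vertices of $C\cup Z$. The main obstacle is this last bookkeeping step: when $Y_1$ and $L$ are simultaneously non-empty and $L$-vertices have neighbours in more than one $X_i$, the colour classes must respect the $X$--$Y_1$, $X$--$L$, and $L$--$Z$ adjacencies simultaneously, and it is here that the homogeneity properties \ref{Atcorel} and \cref{K4XZhomset} must be invoked most carefully.
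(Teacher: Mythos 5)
Your opening is on track: invoking \cref{HVNfreeper} with $t=2$ to extract an $\mathbb{H}_2$, setting up the partition of Section~\ref{genprop} with $z_1,z_2\in Z$, and using $(F_2,F_3)$-freeness to show that no two of $Y_1,Y_2,Y_3$ can be simultaneously nonempty are all correct and match the paper. But you then discard the degree-two vertex $y_1$ of the $\mathbb{H}_2$, and this loses information the paper relies on heavily: choosing $C$ so that $y_1\in Y_1$ is what lets the paper show that $X_2\cup X_3$ is anticomplete to $Z$ and that $|Z\setminus Z_1|\le 1$ (each via a seven-vertex $F_2$/$F_3$ built from a $K_5$ in $C\cup Z$ together with $y_1$ and one more vertex), and these facts are the entry point to the pivotal claim that $X_2\cup X_3=\emptyset$. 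You have no replacement for any of this, and without $X_2\cup X_3=\emptyset$ the final colouring cannot be organised around $C\cup X_1\cup Y_1\cup Z\cup L$ at all.

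There are also two concrete errors and two admitted holes in the remainder. First, $G[Y_1]$ is \emph{not} $(P_3,K_3)$-free ``by \ref{BZancom} and \ref{uvnonadjnbd}'': since $v_2v_3\in E(G)$, \ref{uvnonadjnbd} does not apply to $N(v_2)\cap N(v_3)$, and $G$ is not $K_5$-free here. The paper obtains $K_3$-freeness of $G[Y_1]$ (and of $G[X_1]$) only after proving a homogeneity statement and then using the no-clique-cut-set hypothesis to find, for each component, an external vertex $q$ with $qv_2\notin E(G)$ (resp.\ $qv_1\notin E(G)$) that is complete to it; the same issue breaks your claim that each component of $G[X_i]$ is bipartite or a $5$-ring. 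Second, your proposed detour --- proving that $\omega(G)\ge 7$ forces a clique cut-set and thence $\omega(G)\le 6$ --- is asserted, not proved, and is not needed: the paper never bounds $\omega(G)$, instead proving directly that $L\neq\emptyset$, that $Z_1$ is complete to $L$ (using the ``not complement of bipartite'' hypothesis, which your sketch never actually exploits), and hence $|Z_1|\le 2$ by \ref{lZcard}. Finally, the step you yourself flag as ``the main obstacle'' --- exhibiting the six stable-set decomposition --- is precisely where the paper does its real work (the stable set $A\cup B\cup(Y_1\setminus Y_1')\cup(Z_1\setminus\{z_1\})$ together with two $2$-colourable pieces and one more singleton class), and it depends on all of the missing claims above. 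As it stands the proposal is an outline whose middle and end are either wrong or absent.
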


\begin{proof}
	Let $G$ be a connected ($P_5,K_5-e, F_1, F_2,F_3$)-free graph. We may assume that $G$ has no clique cut-set, and that $G$ is not the complement of a bipartite graph. By \cref{HVNfreeper}, we assume that $G$ contains an $\mathbb{H}_2$, say $K$. Let $V(K):=\{v_1,v_2,v_3,z_1,z_2, y_1\}$ where $\{v_1,v_2,v_3,z_1,z_2\}$ induces a $K_5$, and  $N_{K}(y_1)=\{v_2,v_3\}$.  Let $C:=\{v_1,v_2,v_3\}$. Then  with respect to $C$, we define the sets $X$, $Y$, $Z$ and $L$ as in Section~\ref{genprop}, and we use the properties in Section~\ref{genprop}. Clearly  $y_1\in Y_1$ and $z_1,z_2\in Z$  so that $Y_1$ and $Z$ are nonempty. Recall that $C\cup Z$ is a clique (by \ref{Zclq}), and that $Y$ is anticomplete to $Z$ (by \ref{BZancom}).  To proceed further, we let $Z_1:=\{z\in Z\mid\{z\}\mbox{ is anticomplete to }X_1\}$.	We show that $\chi(G) \leq 6$. First we show that:
	\begin{claim}\label{K5Y2Y3emp}
		The following hold: (i) $Y_2\cup Y_3=\es$. (ii) $X_1\neq \es$.  (iii)  $X_2\cup X_3$ is anticomplete to $Z$.  (iv) $|Z\sm Z_1|\leq 1$.
	\end{claim}
	\no{\it Proof of \cref{K5Y2Y3emp}}.~$(i)$:~If there is a vertex, say $y\in Y_2\cup Y_3$, then  $\{v_1,v_2,v_3,z_1,z_2,y,y_1\}$ induces   an $F_2$ or an $F_3$ (by \ref{BZancom}). So $(i)$ holds. $\diamond$
	
\smallskip
\no{$(ii)$:}~If $X_1=\es$, then $Z\cup \{v_2,v_3\}$ is a clique cut-set separating $\{v_1\}$ and the rest of the vertices of $G$ (by \ref{Zclq} and  $(i)$), a contradiction. So $(ii)$ holds. $\diamond$
	
\smallskip
\no{$(iii)$:}~If there are adjacent vertices, say $x\in X_2\cup X_3$ and $z'\in Z$ (and we may assume that $z'\neq z_1$, by \ref{aZcard}), then $\{z',v_2,v_3,v_1,z_1,y_1,x\}$ induces  an $F_2$ or an $F_3$, a contradiction. So  $(iii)$ holds. $\diamond$
	
\smallskip
\no{$(iv)$:}~If there are vertices, say $z,z'\in Z$ and $x,x'\in X_1$  such that $zx,z'x'\in E(G)$, then $\{v_1,z,z',v_2,v_3,x,x'\}$ induces  an $F_2$ or an $F_3$ (by   \ref{aZcard}), a contradiction.  So  $(iv)$ holds.
  $\Diamond$
	
\medskip
	Since $|Z|\geq 2$, we may assume that $z_1\in Z_1$ (by \cref{K5Y2Y3emp}:$(iv)$). Next we claim  the following:
\begin{claim}\label{K5LZcorel}
	Each vertex in $Z_1$ has a neighbor in $L$, and so $L\neq \es$. Moreover, the vertex-set of each component of $L$ is not anticomplete to $X_1\cup Y_1$.
\end{claim}
\no{\it Proof of \cref{K5LZcorel}}.~For any $z\in Z_1$, since  $C\cup(Z\sm \{z\})$ is not a clique cut-set separating $\{z\}$ and the rest of the vertices of $G$ (by \ref{BZancom} and  \cref{K5Y2Y3emp}:$(iii)$), we see that each vertex in $Z_1$ has a neighbor in $L$. Since $z_1\in Z_1$, we have $L\neq \es$. This proves the first assertion. To prove the second assertion, we let $Q$ be an arbitrary component of $G[L]$. Since $C\cup Z$ is not a clique cut-set separating $V(Q)$ and the rest of the vertices of $G$, we see that $V(Q)$ is not anticomplete to $X\cup Y$. So from \ref{Atcorel}, \cref{K5Y2Y3emp}:$(i)$ and \cref{K5Y2Y3emp}:$(ii)$, it follows that $V(Q)$ is not anticomplete to $X_1\cup Y_1$. This proves \cref{K5LZcorel}. $\Diamond$

\begin{claim}\label{K5LZcom}
$Z_1$ is complete to $L$.
\end{claim}
\no{\it Proof of \cref{K5LZcom}}.~Suppose to the contrary that there is a vertex in $Z_1$, say $z$, such that $L\sm N(z)\neq \es$. We first show that $X\cup Y\cup L$ is a clique, and is given below.

\vspace{-0.2cm}
\begin{enumerate}[leftmargin = 0.67cm, label= {($\alph*$)}]\itemsep=0pt
\item
First we observe that since every vertex of $L\sm N(z)$ has a neighbor in $X_1\cup Y_1$ (by \cref{K5LZcorel}), $X_1$, $Y_1$ and $L\sm N(z)$ are complete to each other (by \ref{F2azancomLcorel}).
 \item {\it $L\cap N(z)$ is complete to $L\sm N(z)$}:  By \cref{K5LZcorel}, let $t'\in L\cap N(z)$ be arbitrary. Then by \ref{AiBi+1homset} and \cref{K5LZcorel}, let $u\in X_1\cup Y_1$ be such that $ut'\in E(G)$. Since for any $v\in L\sm N(z)$, one of $\{v_2,z,t',u,v\}$ or $\{v_1,z,t',u,v\}$ does not induce a $P_5$ (by (a)),   $\{t'\}$ is complete to $L\sm N(z)$. Thus $L\cap N(z)$ is complete to $L\sm N(z)$.

     \item From $(a)$, $(b)$ and from \ref{AiBi+1homset}, $X_1\cup Y_1$ is complete to $L$, where  $L\cap N(z) \neq\es$ and $L\sm N(z)\neq \es$.
     \item   From $(c)$ and  \ref{Atcorel}, it follows that $X_1, X_2$ and $X_3$ are complete to each other, and $X$ is complete to $L$, and hence  $X_2\cup X_3$ is complete to $Y_1$ (by \ref{AiBi+1homset}).
         \item From $(c)$ and $(d)$, and from \cref{K5Y2Y3emp}:$(i)$, we see that $X, Y$, $L\cap N(z)$ and $L\sm N(z)$  are complete to each other, and since  these sets are nonempty,  we conclude that $X\cup Y\cup L$ is a clique (by \ref{4setsclique}).
     \end{enumerate}
 So $V(G)$ can be partitioned into two  cliques, namely, $X\cup Y\cup L$ and $C\cup Z$. Thus $G$ is the complement of a bipartite graph, a contradiction.   So \cref{K5LZcom} holds. $\Diamond$

 \begin{claim}\label{K5LK3free}
	$G[L]$ is $K_3$-free.
\end{claim}
\no{\it Proof of \cref{K5LK3free}}.~Let $Q$ be a component of $G[L]$. By \cref{K5LZcorel},  there is a vertex in $X_1\cup Y_1$, say $a$, which has a neighbor in $V(Q)$. Then  $\{a\}$ is complete to $V(Q)$ (by \ref{AiBi+1homset}). Now since $\{a,z_1\}$ is complete to $V(Q)$ (by  \cref{K5LZcom}),    $Q$ is $K_3$-free (by \ref{uvnonadjnbd}). Since $Q$ is arbitrary, $G[L]$ is $K_3$-free. This proves \cref{K5LK3free}. $\Diamond$

\begin{claim}\label{K5Z1card}
	$|Z_1|\leq 2$.
\end{claim}
\no{\it Proof of \cref{K5Z1card}}.~The proof follows from \ref{lZcard}, \ref{K5LZcorel} and \cref{K5LZcom}. $\Diamond$

\begin{claim}\label{K5X2X3emp}
	$X_2\cup X_3=\es$.
\end{claim}
\no{\it Proof of \cref{K5X2X3emp}}.~Suppose not.  If $X_2\cup X_3$ is anticomplete to $X_1\cup Y_1$, then  $X_2\cup X_3$ is anticomplete to $L$ (by \ref{Atcorel} and \cref{K5Y2Y3emp}:$(ii)$), and then $C\cup Z$ is a clique cut-set separating
$X_2\cup X_3$ and the rest of the vertices (by \cref{K5Y2Y3emp}:$(i)$), a contradiction. So we may assume that  $X_2$ is not anticomplete to $X_1\cup Y_1$. To proceed further, we let $X_2':=\{x\in X_2\mid N(x)\cap (X_1\cup Y_1)\neq \es\}$.
We first show that $X\cup Y_1\cup L$ is a clique    using a sequence of arguments given below.

\vspace{-0.2cm}
\begin{enumerate}[leftmargin=0.67cm, label= {($\alph*$)}]\itemsep=0pt
\item
{\it $X_2'$, $X_1$ and $Y_1$ are complete to each other}:~Let $u\in X_2'$. For any $y\in Y_1\cap N(u)$ and for any $x\in X_1$, since $\{u,y,v_3,v_1,x\}$ and $\{u,y,x,v_1,z_1\}$ do not induce $P_5$'s (by \ref{BZancom} and \cref{K5Y2Y3emp}:$(iii)$), $(Y_1\cap N(u))\cup \{u\}$ is complete to $X_1$. Likewise, $(X_1\cap N(u))\cup \{u\}$ is complete to $Y_1$.   Since one of $(X_1\cap N(u))$ and  $(Y_1\cap N(u))$ is nonempty, it follows that $X_2'$ is complete to $X_1\cup Y_1$,  and $X_1$ is complete to $Y_1$  (by \ref{AiBi+1homset}).

\item {\it $X$ is complete to $L$ and $X_2=X_2'$}:~Suppose not. Then there is a vertex, say $t\in L$ such that $t$ has a nonneighbor in $X$. By \ref{Atcorel}, we may assume that $\{t\}$ is anticomplete to $X$. Then for any $x\in X_1$ and $x'\in X_2'$,  since $\{x,x',v_2,z_1,t\}$ does not induce a $P_5$ (by $(a)$, \cref{K5Y2Y3emp}:$(iii)$ and \cref{K5LZcom}), we see that $X$ is complete to $L$. This implies that $X_2'$ is complete to $X_1$ (by \ref{Atcorel}) and so $X_2'=X_2$.

\item Since $X_2$ is complete $L$ (by $(b)$),  we have $Y_1$ is complete to $L$ (by \ref{AiBi+1homset})  and hence again from \ref{AiBi+1homset}, $Y_1$ is complete to $X_3$. Also  $X_1$ is complete to $X_3$  (by $(b)$ and  \ref{Atcorel}).
\item By $(a), (b)$ and $(c)$, we conclude that $X$, $Y_1$ and $L$ are complete to each other. Then since $Y, L\neq \es$ and $|X|\geq 2$ (by \cref{K5Y2Y3emp}:$(ii)$ and \cref{K5LZcorel}), it follows from \ref{4setsclique} that $X\cup Y_1\cup L$ is a clique.
\end{enumerate}
  So by \cref{K5Y2Y3emp}:$(i)$, $V(G)$ can be partitioned into two  cliques, namely, $X\cup Y\cup L$ and $C\cup Z$. Thus $G$ is the complement of a bipartite graph, a contradiction. So \cref{K5X2X3emp} holds. $\Diamond$

\medskip
By  \cref{K5Y2Y3emp} and \cref{K5X2X3emp}, $V(G)= C\cup X_1\cup Y_1\cup Z\cup L$. Now we claim  the following:

\begin{claim}\label{K5X1K3free}
	The vertex-set of each component of $G[X_1]$ is a homogeneous set in $G[X_1\cup Y_1\cup L]$, and so $G[X_1]$ is $K_3$-free.
\end{claim}
\no{\it Proof of \cref{K5X1K3free}}.~If there are vertices, say $a,b\in X_1$ and $p\in Y_1\cup L$ such that $ab,ap\in E(G)$ and $bp\notin E(G)$, then $\{b,a,p,v_2,z_1\}$ induces a $P_5$ (by \ref{BZancom} and \cref{K5LZcom}); so the first assertion holds. To prove the second assertion, we let $Q$ be a component of $G[X_1]$. Since $C\cup Z$ is not a clique cut-set separating $V(Q)$ and the rest of the vertices, $V(Q)$ is not anticomplete to $L\cup Y_1$. Let $t\in L\cup Y_1$ be such that $\{t\}$ is not anticomplete to $V(Q)$. So $V(Q)$ is complete to $\{t,v_1\}$, by the first assertion. Then from \ref{uvnonadjnbd}, $V(Q)$ is $K_3$-free. Since $Q$ is arbitrary, $G[X_1]$ is $K_3$-free. This proves \cref{K5X1K3free}. $\Diamond$

\begin{claim}\label{K5YK3free}
	The vertex-set of each component of $G[Y_1]$ is a homogeneous set in $G$,  and so $\chi(G[Y_1])\leq 2$.
\end{claim}
\no{\it Proof of \cref{K5YK3free}}.~If there are vertices, say $a,b\in Y_1$ and $p\in V(G)\sm Y_1$ such that $ab,ap\in E(G)$ and $bp\notin E(G)$, then  $\{b,a,p,v_1,z_1\}$ induces a $P_5$ (by \ref{BZancom} and \cref{K5LZcom}); so the first assertion holds. To prove the second assertion, let  $Q$ be a component of $G[Y_1]$. Since $\{v_2,v_3\}$ is not a clique cut-set separating $V(Q)$  and the rest of the vertices,   $V(Q)$ is not anticomplete to $L\cup X_1$. Let $q\in L\cup X_1$ be such that $\{q\}$ is not anticomplete to $V(Q)$. Thus $V(Q)$ is complete to $\{q,v_2,v_3\}$, by the first assertion. Thus by \ref{uvadjnbd} and \ref{uvnonadjnbd}, it follows that $Q$ is ($P_3, K_3$)-free, and hence  $\chi(Q)\leq 2$. This proves \cref{K5YK3free}, since $Q$ is arbitrary. $\Diamond$

\medskip
Now by using \cref{P5K3prop}:$(a)$ and \cref{K5X1K3free}, we pick a maximum stable set from each  $5$-ring-component of $G[X_1]$ (if exists),  and let $A$ be the union of these sets. Again by using \cref{P5K3prop}:$(a)$ and \cref{K5LK3free}, we pick a maximum stable set from each $5$-ring-component of $G[L]$ (if exists),  and let $B$ be the union of these sets. Note that $A$ and $B$ are stable sets,      $\chi(G[X_1\sm A])\leq 2$ and  $\chi(G[L\sm B])\leq 2$ (by \cref{P5K3prop}:$(a)$). Let $Y_1'$ be a maximal stable set of $G[Y_1]$.   Next we claim the following:
\begin{claim}\label{K5col}
	$A\cup B\cup (Y_1\sm Y_1')\cup (Z_1\sm \{z_1\})$ is a stable set.
\end{claim}
\no{\it Proof of \cref{K5col}}.~Clearly $A\cup (Z_1\sm \{z_1\})$ and $(Y_1\sm Y_1')\cup (Z_1\sm \{z_1\})$ are stable sets (by \ref{BZancom} and \cref{K5Z1card}).
 Suppose to the contrary that there are adjacent vertices, say $p,q\in A\cup B\cup (Y_1\sm Y_1')\cup (Z_1\sm \{z_1\})$.  First suppose that $p\in A$. Then $q\notin Z_1\sm \{z_1\}$. Let $Q$ be the $5$-ring-component of $G[X_1]$ such that $p\in V(Q)$. Since $q\in B\cup (Y_1\sm Y_1')$, there is a vertex, say $r\in (L\sm B)\cup Y_1'$ such that $qr\in E(G)$. Then by \ref{AiBi+1homset}, \cref{K5X1K3free} and \cref{K5YK3free}, it follows that $\{q,r\}$ is complete to $V(Q)$. So by \ref{uvadjnbd}, $Q$ is $P_3$-free, a contradiction.   So we may assume that $p\in B$ and $q\in (Y_1\sm Y_1')\cup (Z_1\sm \{z_1\})$. Let $Q'$ be the $5$-ring-component of $G[L]$ such that $p\in V(Q')$. If $q\in Y_1\sm Y_1'$, then there is a vertex in $Y_1'$, say $y$, such that $qy\in E(G)$, and hence $V(Q')$ is complete to $\{q,y\}$ (by \cref{K5YK3free} and \ref{AiBi+1homset}); so from \ref{uvadjnbd}, $Q'$ is $P_3$-free, a contradiction. So   $q\in Z_1\sm \{z_1\}$. Since  $V(Q')$ is complete to $\{z_1,q\}$ (by \cref{K5LZcom}), it follows from \ref{Zclq} and \ref{uvadjnbd} that $Q'$ is $P_3$-free, a contradiction.  So \cref{K5col} holds. $\Diamond$

\medskip
 So from \cref{K5Y2Y3emp}:$(iv)$,  \cref{K5YK3free} and  \cref{K5col}, we conclude that $\chi(G)\leq\chi(G[A\cup B\cup (Y_1\sm Y_1')\cup (Z_1\sm \{z_1\})])+\chi(G[(X_1\sm A)\cup \{z_1,v_2\}])+\chi(G[(L\sm B)\cup \{v_1,v_3\}])+\chi(G[Y_1'\cup (Z\sm Z_1)])\leq 1+2+2+1 = 6$. This completes the proof of \cref{mainthm-w5}.
 \end{proof}

\subsubsection{($P_5, K_5-e, F_1$)-free graphs with $\omega =4$}
 We begin with the following. Let $G$ be a connected ($P_5$, $K_5-e$, $K_5$, $F_1$)-free graph  which has no clique cut-set. Suppose that $\omega(G)=4$. So $G$ contains a $K_4$, say $K$ with vertices $\{v_1,v_2,v_3,z^*\}$. We let $C := \{v_1,v_2,v_3\}$. Then,  with respect to $C$, we define the sets $X,Y,Z$ and $L$ as in Section~\ref{genprop}, and we use the properties in Section~\ref{genprop}. Clearly  $z^*\in Z$, and so $Z\neq \es$.  For $i\in \{1,2,3\}$, we let $W_i:=\{x\in X_i \mid xz^*\in E(G)\}$, and let $L_1:=\{t\in L\mid N(t)\cap X=\es\}$.
Recall that $C\cup Z$ is a clique (by \ref{Zclq}), and that $Y$ is anticomplete to (by \ref{BZancom}).   	Moreover, the graph $G$ has some more properties which we give in a few lemmas below.

\begin{lemma}\label{K4Xcompprop}
	For $i\in \{1,2,3\}$,    the following hold:

\vspace{-0.2cm}
	\begin{lemmalist}\itemsep=0pt
 \item \label{K4XiK3free} If $Y_{i+1}\cup Y_{i-1}\neq \es$, and if $V(Q)$ is the vertex-set of a component of $G[X_i]$ which is anticomplete to $L$, then $Q$ is $K_3$-free.
		\item\label{K4F4Wicol} The vertex-set of each big-component of $W_i$ is anticomplete to $Y_{i+1}\cup Y_{i-1}$.
		\item\label{K4C5ringcolor} If $Q$ is a $5$-ring component of $G[X_i]$ (if exists), then $V(Q)$ is anticomplete to $Y_{i+1}\cup Y_{i-1}$, to the vertex-set of each big-component of $G[X_{i+1}]$,  and to the vertex-set of each big-component of $G[X_{i-1}]$.

\item\label{K4X1Y2comY3emp} If $X_i\sm W_i$ is not anticomplete to  $Y_{i+1}$, then $W_i\cup Y_{i-1}=\es$. Likewise, if $X_i\sm W_i$ is not anticomplete to  $Y_{i-1}$, then $W_i\cup Y_{i+1}=\es$.
				\end{lemmalist}
\end{lemma}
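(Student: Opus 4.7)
The plan is to prove each of (i)--(iv) by deriving either a forbidden induced subgraph ($P_5$, $K_5-e$, or $F_1$), a clique of size $5$ violating $\omega(G)=4$, or a clique cut-set in $G$. The homogeneity tools \ref{AiBi+1homset}, \ref{K4XiYihomset}, and \ref{K4XZhomset} drive the arguments throughout: once a vertex outside a component of $G[X_i]$ has a single neighbor in that component, homogeneity forces it to be complete to the whole component, and combining this completeness with a triangle or $C_5$ living inside the component then blows up $\omega(G)$ or manufactures a $K_5-e$.

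For (i), assume $V(Q)$ contains a triangle $\{p,q,r\}$. Since $Y_{i+1}\cup Y_{i-1}\neq\es$ and $G$ is $F_1$-free, \ref{K4XZhomset} forces each $z\in Z$ to be anticomplete to $V(Q)$, for otherwise $\{p,q,r,v_i,z\}$ is a $K_5$. Because $V(Q)$ is anticomplete to $L$, $V(Q)$ is itself a component of $G[X_i\cup L]$, and \ref{AiBi+1homset} then makes every vertex of $X_{i+1}\cup X_{i-1}\cup Y_{i+1}\cup Y_{i-1}$ complete or anticomplete to $V(Q)$: completeness yields a $K_5$ on $\{p,q,r,v_i,y\}$ for $y\in Y_{i\pm1}$, and a $K_5-e$ on $\{p,q,r,v_i,w\}$ missing $v_iw$ for $w\in X_{i\pm1}$. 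Finally, \ref{K4XiYihomset} plays the same role for $Y_i$, producing a $K_5-e$ missing $v_iy'$. Thus $V(Q)$ is anticomplete to every vertex outside $V(Q)\cup\{v_i\}$, making $\{v_i\}$ a clique cut-set, contradiction.

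For (ii), take $p_1p_2\in E(G)$ in a big component of $W_i$ and suppose $yp_1\in E(G)$ with $y\in Y_{i+1}$ (the $Y_{i-1}$ case is symmetric). Since $p_1,p_2,v_i,z^*$ are pairwise adjacent and $yz^*\notin E(G)$ by \ref{BZancom}, $yp_2\in E(G)$ gives the induced $K_5-e$ on $\{p_1,p_2,v_i,y,z^*\}$ missing $yz^*$, while $yp_2\notin E(G)$ produces the induced $P_5$ $p_2-p_1-y-v_{i-1}-v_{i+1}$. For (iii), write $V(Q)=S_1\cup\cdots\cup S_5$ and let $V(Q^*)$ be the component of $G[X_i\cup L]$ containing $V(Q)$; by \ref{AiBi+1homset} each vertex of $X_{i\pm1}\cup Y_{i\pm1}$ is complete or anticomplete to $V(Q^*)$. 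A $y\in Y_{i+1}$ complete to $V(Q^*)$ produces a $K_5-e$ on $\{v_i,y,p_2,p_3,p_4\}$ missing the non-edge $p_2p_4$ of the $5$-ring; for a big component $R\subseteq X_{i+1}$ with $r_1r_2\in E(G)$ and $r_1p\in E(G)$ for some $p\in S_1$, a double application of homogeneity (of $V(Q^*)$ from the $X_i$ side and of the analogous $R^*$ from the $X_{i+1}$ side) forces both $r_1$ and $r_2$ to be complete to $V(Q)$, whence $\{r_1,r_2,p,p_2,p_5\}$ is a $K_5-e$ missing only $p_2p_5$. Symmetric arguments dispose of $Y_{i-1}$ and $X_{i-1}$.

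Part (iv) is the most intricate. Given $x\in X_i\sm W_i$ and $y\in Y_{i+1}$ with $xy\in E(G)$, \ref{K4XZhomset} (using $F_1$-freeness and $Y_{i+1}\neq\es$) shows that the component of $G[X_i]$ containing $x$ is anticomplete to $\{z^*\}$, while the component containing any hypothetical $w\in W_i$ is complete to $\{z^*\}$; hence these components are distinct and $xw\notin E(G)$. Splitting on $yw$: $yw\in E(G)$ yields the induced $P_5$ $x-y-w-z^*-v_{i+1}$, and $yw\notin E(G)$ yields the induced $P_5$ $x-y-v_{i-1}-z^*-w$; thus $W_i=\es$. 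To force $Y_{i-1}=\es$, suppose $y'\in Y_{i-1}$ and split on the two edges $xy',yy'$: the sub-case $\{xy',yy'\}\cap E(G)=\es$ gives the induced $P_5$ $x-y-v_{i-1}-v_{i+1}-y'$, the sub-case $yy'\in E(G),\, xy'\notin E(G)$ gives $x-y-y'-v_{i+1}-z^*$, and $xy'\in E(G),\, yy'\notin E(G)$ gives $y-x-y'-v_{i+1}-z^*$. The last sub-case $\{xy',yy'\}\subseteq E(G)$ resists a direct $P_5$ or $K_5-e$ hunt, but the set $\{v_i,v_{i-1},v_{i+1},z^*,x,y,y'\}$ will turn out to induce an $F_1$, with $v_i$ as the degree-$6$ centre, $\{v_i,v_{i-1},v_{i+1},z^*\}$ and $\{v_i,x,y,y'\}$ as the two $K_4$'s joined at $v_i$, and $y'v_{i+1}$ the ``diagonal'' edge supplied by $y'\in Y_{i-1}$; verifying this $F_1$ is the main obstacle, since the remaining non-edges $xz^*,xv_{i\pm1},yz^*,yv_{i+1},y'z^*,y'v_{i-1}$ need to be read off from the definitions of $X_i\sm W_i$, $Y_{i\pm1}$, and \ref{BZancom} simultaneously.
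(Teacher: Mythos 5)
Your proof is correct and follows essentially the same route as the paper: the same homogeneity tools (\ref{AiBi+1homset}, \ref{K4XiYihomset}, \ref{K4XZhomset}) combined with hunting for $K_5$, $K_5-e$, $P_5$ or $F_1$ and clique cut-set contradictions; your $P_5$'s in (ii) and (iv) and the $F_1$ on $\{v_i,v_{i-1},v_{i+1},z^*,x,y,y'\}$ in the final sub-case of (iv) are exactly the paper's configurations, and you correctly fill in the details the paper omits for the $X_{i\pm1}$ part of (iii) and the $Y_{i-1}$ part of (iv). The only organizational difference is in (i), where the paper notes that the neighbourhood of $V(Q)$ cannot be a clique (else it is a clique cut-set), extracts two nonadjacent vertices complete to $V(Q)$ and applies \ref{uvnonadjnbd}, while you assume a triangle and show $V(Q)$ is anticomplete to everything but $v_i$, making $\{v_i\}$ a cut-vertex; both are valid.
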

\begin{proof} 	We will show for $i=1$.

\smallskip
	\no{$(i)$:}~If $V(Q)$ is complete to $\{z^*\}$, then since   $G$ is $K_5$-free, clearly $Q$ is $K_3$-free. So by \cref{K4XZhomset},  $\{z^*\}$ is anticomplete $V(Q)$. Then since the set $\{u\in V(G)\sm V(Q)\mid N(u)\cap V(Q)\neq \es\}$ is not a clique cut-set separating $V(Q)$ and the rest of the vertices, there are nonadjacent vertices, say $p,q\in V(G)\sm V(Q)$ such that both $p$ and $q$ has neighbors in $V(Q)$.  Then  $\{p,q\}$ is complete to $V(Q)$ (by \ref{AiBi+1homset} and \Cref{K4XiYihomset}). So $Q$ is $K_3$-free (by \ref{uvnonadjnbd}). This proves \cref{K4XiK3free}. $\diamond$
	
\smallskip
\no{$(ii)$:}~If there are  vertices, say $w,w'\in W_1$ and $y\in Y_{2}\cup Y_{3}$ such that $ww', wy\in E(G)$, then  $\{w,w',v_1,y,z^*\}$ induces a $K_5-e$ (by \ref{AiBi+1homset} and \cref{K4XZhomset}). So \cref{K4F4Wicol} holds. $\diamond$
	
	\smallskip
\no{$(iii)$:}~If there is a vertex, say $y\in Y_{2}\cup Y_{3}$  such that $y$ has a neighbor in $V(Q)$, then $\{y\}$ is complete to $V(Q)$ (by \ref{AiBi+1homset}), and then $V(Q)\cup \{v_1,y\}$ induces a $K_5-e$; so $V(Q)$ is anticomplete to $Y_{2}\cup Y_{3}$. A similar proof holds for other assertions  and   we omit the details.  So  \cref{K4C5ringcolor} holds. $\diamond$

\smallskip
\no{$(iv)$}:~Suppose not, and let $w\in W_1\cup Y_3$. If $w\in Y_3$, then $G$ contains either a $P_5$ or an $F_1$ (and the proof is similar to the proof of \cref{G2A1B2B3ancom}). So we assume that $w\in W_1$. Since $X_1\sm W_1$ is not anticomplete to $Y_2$, there are vertices, say $x\in X_1$ and  $y\in Y_2$, such that $xy\in E(G)$ and $z^*x\notin E(G)$. Then by \cref{K4XZhomset}, $wx\notin E(G)$. Now since $\{x,y,v_3,z^*,w\}$ does not induce a $P_5$, $wy\in E(G)$, and then $\{x,y,w,z^*,v_2\}$ induces a $P_5$. So \cref{K4X1Y2comY3emp} holds.
\end{proof}

\begin{lemma}\label{K4-YL}
The following hold:

\vspace{-0.2cm}
		\begin{lemmalist}
\item\label{K4-z} $Z = \{z^*\}$.
			\item\label{K4Yibipart}For $i\in \{1,2,3\}$,  $G[Y_i]$ is the union of $K_2$'s and $K_1$'s.
			 			\item\label{K4L1diamondfreecol} $L\sm L_1$ is anticomplete to $L_1$. Moreover, $\chi(G[L])\leq 3$.
\item\label{K4XLcomcol}
	If there is an $i\in \{1,2,3\}$ such that $X_i$, $X_{i+1}$ and $L\sm L_1$ are nonempty, then $\chi(G)\leq 7$.
			\end{lemmalist}
	\end{lemma}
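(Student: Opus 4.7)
The plan is to dispatch parts $(i)$ and $(ii)$ quickly from the clique bound $\omega(G)=4$, then use a direct $P_5$ argument for the anticomplete statement in $(iii)$, and finally to set up $(iv)$ by combining a coloring of $G[L]$ from $(iii)$ with the structural information about $G[X_i]$ and $G[Y_i]$ already assembled in \cref{K4Xcompprop} and $(ii)$.

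For $(i)$, recall that $C\cup Z$ is a clique by \ref{Zclq}, so $|Z|\le \omega(G)-|C|=1$; since $z^*\in Z$, we get $Z=\{z^*\}$. For $(ii)$, each $Y_i$ is complete to the edge $\{v_{i+1},v_{i-1}\}$, so by \ref{uvadjnbd}  the graph $G[Y_i]$ is $P_3$-free, i.e.\ a disjoint union of cliques. A $K_3$ inside $Y_i$ would, together with $\{v_{i+1},v_{i-1}\}$, produce a $K_5$, contradicting $\omega(G)=4$. Hence each component of $G[Y_i]$ is a $K_2$ or a $K_1$.

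For $(iii)$, the anticompleteness is a one-line $P_5$ check: if $t\in L_1$, $t'\in L\sm L_1$, $tt'\in E(G)$ and $x\in N(t')\cap X_j$, then $\{t,t',x,v_j,v_{j+1}\}$ induces a $P_5$ (all the would-be chords fail because $t\in L_1$ has no neighbor in $X$ or in $C$, $t'\in L$ is anticomplete to $C$, and $x\in X_j$ is only adjacent to $v_j$ in $C$). Because $L_1$ and $L\sm L_1$ are then in different components of $G[L]$, I will bound $\chi$ separately on each. On each side I aim to show the subgraph is $K_3$-free and invoke \cref{P5K3prop}:$(a)$ to get $\chi\le 3$. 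For $G[L\sm L_1]$: any triangle $\{a,b,c\}$ would share, via \ref{Atcorel}:(a)--(b) applied at the neighbor of $a$ in some $X_j$, a common neighbor $v_{j+1}$ (and enough of $X$) to create a $K_5-e$ or a $K_5$. For $G[L_1]$: using \ref{lZcard} and the fact that these vertices reach the rest of $G$ only through $Y\cup\{z^*\}$, I would derive that any $K_3$ forces a common neighbor in some $Y_i$ which, combined with $\{v_{i+1},v_{i-1}\}$, again yields a $K_5-e$. The chromatic bound $\chi(G[L])\le 3$ follows from taking the maximum of the two colorings.

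Part $(iv)$ is the main obstacle. Assume WLOG $i=1$ and fix $t\in L\sm L_1$ with a neighbor $x\in X_j$; by \ref{Atcorel} then $\{t\}$ is complete to $X_{j+1}\cup X_{j-1}$ and $X_j$ is complete to $X_{j+1}\cup X_{j-1}$, which with $X_1,X_2\ne\es$ pins down strong completeness among the $X_i$'s. The plan is to build an explicit partition of $V(G)$ into $7$ stable sets as follows. First, use $(iii)$ to $3$-color $G[L]$, yielding stable sets $T_1,T_2,T_3$. Second, for each $i\in\{1,2,3\}$ separate $X_i$ into its $5$-ring-components (handled by \ref{K4C5ringcolor}, which guarantees they are anticomplete to $Y_{i\pm 1}$ and to big-components of $X_{i\pm 1}$, so their three color classes can be absorbed into color classes coming from other indices) and its non-$5$-ring-components (handled using $W_i$-versus-$X_i\sm W_i$ via \ref{K4F4Wicol} and \ref{K4X1Y2comY3emp}, and the union-of-$K_2$'s structure of $G[Y_i]$ from $(ii)$). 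Third, add $C\cup\{z^*\}$ using four colors, three of which can be shared with existing classes (each $v_i$ is anticomplete to $Y_i\cup X_{i+1}\cup X_{i-1}\cup L_1$, and $z^*$ is anticomplete to $Y\cup L_1$ by \ref{BZancom} and the definition of $L_1$). The hardest bookkeeping step will be simultaneously matching up the $5$-ring color classes of $G[X_i]$ with the color classes of $G[Y_j]$ and of $G[L]$ without creating any monochromatic edge; I expect this to reduce to checking the compatibility statements in \ref{K4C5ringcolor} and \ref{K4X1Y2comY3emp} one index at a time, at which point a $7$-coloring presents itself.
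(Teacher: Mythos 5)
Parts $(i)$ and $(ii)$ are correct and essentially identical to the paper's proof, and your $P_5$ argument for the anticompleteness of $L_1$ and $L\setminus L_1$ in $(iii)$ is also fine. The problems begin with the bound $\chi(G[L])\le 3$. You propose to show that $G[L_1]$ and $G[L\setminus L_1]$ are $K_3$-free and then invoke \cref{P5K3prop}:$(a)$, but the $K_3$-freeness is not established and need not hold. For a triangle $\{a,b,c\}$ in a component $Q$ of $G[L]$, the homogeneity property \ref{AiBi+1homset} gives you one vertex $p\in X\cup Y$ complete to $V(Q)$; then $\{a,b,c,p\}$ is merely a $K_4$, and appending $v_j$ (or $v_{i+1}$) yields a $K_4$ with a pendant vertex, not a $K_5-e$. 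To get a contradiction you would need a \emph{second} vertex, nonadjacent to $p$ and also complete to $V(Q)$; the no-clique-cut-set hypothesis only guarantees a second vertex in $N(V(Q))$, which may be $z^*$, and $z^*$ is not covered by \ref{AiBi+1homset} and need not be complete to $V(Q)$. (Your alternative $K_5$ argument via \ref{Atcorel} also silently assumes $X_{j+1}\cup X_{j-1}\ne\emptyset$, which is not part of the hypothesis of $(iii)$.) The paper sidesteps this entirely: one dominating vertex of $Q$ from $X\cup Y$ already shows $Q$ is $(K_4,K_4-e)$-free, and then \cref{P5diamond-col} gives $\chi(Q)\le\max\{3,\omega(Q)\}\le 3$. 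That weaker conclusion is all that is needed, and it is what you should prove.

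Part $(iv)$ as written is not a proof: the seven stable sets are never produced, and you explicitly defer the ``hardest bookkeeping step.'' More importantly, you miss the structural collapse that the hypothesis causes. Since some $t\in L\setminus L_1$ has a neighbour in some $X_j$ and at least two of the sets $X_1,X_2,X_3$ are nonempty, \ref{Atcorel} forces $X_1$, $X_2$, $X_3$ and $L\setminus L_1$ to be pairwise complete. Consequently, by \ref{uvadjnbd} and $K_5$-freeness, each of $G[X_1]$, $G[X_2\cup X_3]$ and $G[L\setminus L_1]$ is $(P_3,K_3)$-free, and at most one of these three sets fails to be stable; placing $v_1$, $v_2$, $v_3$ into appropriate classes gives $\chi(G[C\cup X\cup(L\setminus L_1)])\le 4$, and since $L_1$ is anticomplete to $X\cup(L\setminus L_1)$ and $\chi(G[Y\cup Z])\le 3$, one gets $\chi(G)\le 3+4=7$. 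In particular, under this hypothesis the sets $X_i$ have no $5$-ring-components at all, so the machinery you import from \cref{K4C5ringcolor} and \cref{K4X1Y2comY3emp} (which the paper uses for \cref{XiXi-1yi4col}, a different situation) is the wrong tool here, and the colour-class matching you hope will ``present itself'' is exactly the part that needs an argument.
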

	\begin{proof}$(i)$:~Since $G$ is $K_5$-free, this follows from \cref{Zclq}. $\diamond$

\smallskip
	\no{$(ii)$}:~Since $G[Y_i\cup \{v_{i+1},v_{i-1}\}]$ does not induce a $K_5$, $G[Y_i]$ is $K_3$-free. Also since $G[Y_i]$ is $P_3$-free (by \ref{BZancom}), we see that $G[Y_i]$ is the union of $K_2$'s and $K_1$'s. This proves \cref{K4Yibipart}.  $\diamond$

		\smallskip
	\no{$(iii)$}:~Clearly  $L\sm L_1$ is anticomplete to $L_1$ (by \ref{AiBi+1homset}). To prove the second assertion, consider a component of  $G[L]$, say $Q$. Then since $C\cup Z$ is not a clique cut-set separating $V(Q)$ and the rest of the vertices (by \ref{Zclq}), there is a vertex, say $p\in X\cup Y$ which has a neighbor in $V(Q)$. Then $\{p\}$ is complete to $V(Q)$ (by \ref{AiBi+1homset}), and then since $G$ is $(K_5,K_5-e)$-free, $Q$ is $(K_4,K_4-e)$-free. Hence $\chi(Q) \leq 3$, by \cref{P5diamond-col}. This proves \cref{K4L1diamondfreecol}, since $Q$ is arbitrary. $\diamond$

 \smallskip
	\no{$(iv)$}:~We may assume that $i=1$.  Clearly  $\chi(G[Y \cup Z]) \leq 3$  (by \ref{BiBi+1bigcomancom}, \cref{K4-z} and \cref{K4Yibipart}), and $\chi(G[L_1])\leq 3$ (by \cref{K4L1diamondfreecol}). Since $L\sm L_1\neq \es$,  it follows from \ref{Atcorel} that  $X_1,X_2$, $X_3$ and $L\sm L_1$ are complete to each other. Then since $G$ is ($K_5, K_5-e$)-free, it follows from   \ref{uvadjnbd} that  $G[X_1]$, $G[X_2\cup X_3]$ and $G[L\sm L_1]$ are ($P_3, K_3$)-free, and hence bipartite. Also since $G$ is $K_5$-free, at least two of  $X_1$, $X_2\cup X_3$ and $L\sm L_1$ are stable sets.   Then since $\{v_2\}$ is anticomplete to $X_1$, $\{v_1\}$ is anticomplete to $X_2\cup X_3$, and $\{v_3\}$ is anticomplete to $L\sm L_1$, we conclude that $\chi(G[C\cup X\cup (L\sm L_1)])\leq 4$. Clearly  $L_1$ is anticomplete to $(L\sm L_1) \cup X$ (by \ref{AiBi+1homset} and by the definition of $L_1$). So $\chi(G)\leq \chi(G[Y\cup Z])+\chi(G[C\cup X\cup L])=3+4=7$. This proves \cref{K4XLcomcol}.
					\end{proof}

\begin{lemma}\label{XiXi-1yi4col}
	For $i\in \{1,2,3\}$: Define $M=Y_i$ if $Y_{i+1}$ and $Y_{i-1}$ are nonempty, otherwise let $M=\es$. If $L$ is anticomplete to $X_{i+1}\cup X_{i-1}$, and if $G[X_{i+1}]$ and   $G[X_{i-1}]$ are $K_3$-free, then $\chi(G[X_{i+1}\cup X_{i-1}\cup M\cup L\cup \{v_{i}\}])\leq 4$.
	\end{lemma}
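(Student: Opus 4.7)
Without loss of generality take $i=1$, and set $H:=G[X_2\cup X_3\cup M\cup L\cup\{v_1\}]$. The first observation is that $v_1$ has no neighbour in $X_2\cup X_3\cup M\cup L$: indeed $v_1$ is anticomplete to $X_2\cup X_3$ by the defining condition on $X_j$, anticomplete to $M\subseteq Y_1$ by the defining condition on $Y_1$, and anticomplete to $L$ by the defining condition on $L$. Hence $v_1$ is an isolated vertex of $H$ and can be slotted into any colour class at the end, so it suffices to produce a proper $4$-colouring of $H-v_1$.

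Next I decompose $X_2$ and $X_3$. For $j\in\{2,3\}$ the graph $G[X_j]$ is $(P_5,K_3)$-free by hypothesis, so \cref{P5K3prop}(a) says each of its components is bipartite or a $5$-ring. Choose $A_j$ as the union of a maximum stable set picked from each $5$-ring component of $G[X_j]$, so that $G[X_j\setminus A_j]$ is bipartite; write $X_j\setminus A_j=B_j^{1}\cup B_j^{2}$ for a bipartition. By \cref{K4C5ringcolor}, every $5$-ring component $Q$ of $G[X_j]$ is anticomplete to $Y_{j+1}\cup Y_{j-1}$ and to each big component of $G[X_{5-j}]$, so $A_2\cup A_3$ is a stable set anticomplete to $M$ (which is either empty or equals $Y_1$); by hypothesis it is also anticomplete to $L$, and of course to $v_1$. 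Using \cref{K4L1diamondfreecol} pick a $3$-colouring $L=L_{(1)}\cup L_{(2)}\cup L_{(3)}$ of $G[L]$, and set
\[
S_1:=A_2\cup A_3\cup L_{(1)}\cup\{v_1\},
\]
which is stable by the preceding observations. The remaining task is to cover $B_2\cup B_3\cup M\cup L_{(2)}\cup L_{(3)}$ with three further stable sets $S_2,S_3,S_4$.

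My plan for the three remaining classes is to align the bipartitions $B_j=B_j^1\cup B_j^2$ across $j\in\{2,3\}$. Invoking \cref{K4XZhomset} (whenever applicable), \cref{K4XiYihomset}, \cref{K4XiK3free} and the $F_1$-freeness of $G$, I would argue that each component of $G[B_2\cup B_3]$ is itself bipartite with a bipartition that refines the chosen $B_j^1,B_j^2$; after possibly swapping the labels on $B_3$ this allows me to declare $B_2^{1}\cup B_3^{1}$ and $B_2^{2}\cup B_3^{2}$ to be stable sets. Into these, by \cref{K4Yibipart}, a bipartition $M=M'\cup M''$ of the union-of-$K_2$-and-$K_1$ graph $G[M]$ is to be inserted.

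The key obstacle, and the step I expect to absorb most of the work, is making the insertion of $M'\cup M''$ into $S_2,S_3$ compatible with the cross-edges between $M$ and $B_2\cup B_3$. Here I would use that when $M=Y_1\ne\emptyset$ we have $Y_2,Y_3\ne\emptyset$, so that \cref{K4X1Y2comY3emp} forces every vertex of $X_j\setminus W_j$ to be anticomplete to $M$ for $j\in\{2,3\}$, and \cref{K4F4Wicol} forces every big component of $G[W_j]$ to be anticomplete to $M$. Consequently an edge from $M$ into $B_2\cup B_3$ can only go to a vertex that is a singleton component of $G[W_j]$, and this sparse pattern lets me choose which of $M',M''$ to place in each of $B_2^{1}\cup B_3^{1}$, $B_2^{2}\cup B_3^{2}$ so as to preserve stability. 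Finally, since $L$ is anticomplete to $X_2\cup X_3$ by hypothesis, placing $L_{(2)}$ and $L_{(3)}$ into $S_2$ and $S_3$ only requires respecting edges between $L$ and $M$, and these are governed solely by $G[M\cup L]$; a minor case-split (splitting off those $L$-vertices whose $M$-neighbourhood conflicts into a small residual class $S_4$) completes the partition $V(H)=S_1\cup S_2\cup S_3\cup S_4$ into four stable sets, giving $\chi(H)\le 4$.
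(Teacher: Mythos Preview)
Your overall decomposition of $X_2$ and $X_3$ mirrors the paper's, and the set $S_1:=A_2\cup A_3\cup L_{(1)}\cup\{v_1\}$ is indeed stable. The trouble starts immediately afterwards.

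\medskip
\textbf{The main gap.} You assert that ``each component of $G[B_2\cup B_3]$ is itself bipartite'' (where your $B_j=X_j\setminus A_j$), and hence that after swapping labels one can take $B_2^1\cup B_3^1$ and $B_2^2\cup B_3^2$ to be stable. None of the lemmas you cite yields this. In fact nothing in the hypotheses forbids a triangle with two vertices in $X_2$ and one in $X_3$: take $a,b\in X_2$ with $ab\in E(G)$ (a $K_2$-component, so $A_2=\emptyset$) and $c\in X_3$ an isolated vertex of $G[X_3]$ with $ac,bc\in E(G)$. Then $G[B_2\cup B_3]$ contains the triangle $\{a,b,c\}$ and is not bipartite, so no relabelling of $B_3$ can make $B_2^1\cup B_3^1$ and $B_2^2\cup B_3^2$ simultaneously stable. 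The paper never needs this: it pairs the sets \emph{asymmetrically}, taking $A_3\cup B_2$ and $A_2\cup B_3$ (here $B_k$ denotes a single maximum stable set picked from the big components of $G[X_k\setminus A_k]$) and placing the residual stable sets $X_2\setminus(A_2\cup B_2)$ and $X_3\setminus(A_3\cup B_3)$ in \emph{separate} colour classes. The point is that \cref{K4C5ringcolor} makes a $5$-ring component of $G[X_j]$ anticomplete to every \emph{big} component of $G[X_{5-j}]$, so $A_3$ is anticomplete to $B_2$ and $A_2$ to $B_3$; this sidesteps the cross-bipartiteness issue entirely.

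\medskip
\textbf{A second gap.} Your treatment of the edges between $L$ and $M$ (``a minor case-split'') is not a proof. You have already committed $L_{(1)}$ to $S_1$ and must now fit $L_{(2)},L_{(3)},M',M''$ into three classes together with the $B_j^\ell$; but a vertex of $L_{(2)}$ can be adjacent to vertices in both $M'$ and $M''$, and nothing you have said controls this. The paper handles this with a specific construction: set $D$ to be a maximum stable set in $G[Y_1]$ and $L':=\{t\in L:N(t)\cap D\neq\emptyset\}$. One then shows (using \ref{BiBi+1bigcomancom} and \ref{AiBi+1homset}) that $L'$ is complete to every big component of $G[Y_1]$, that $L'$ is anticomplete to $L\setminus L'$, and that $G[L']$ is bipartite. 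This gives exactly the separation needed: $L\setminus L'$ is anticomplete to $D$ (so it can share a class with $D$), while $L'$ needs only two colours and is disjoint from $Y_1$ in the classes where it is placed.
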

\begin{proof}
	Let $i=1$.
	For $k\in \{2,3\}$, we pick a maximum stable set from each $5$-ring-component of $G[X_k]$   (by using Theorem~\ref{P5K3prop}:$(a)$), and let $A_k$ be the union of these sets. So $G[X_2\sm A_2]$ and $G[X_3\sm A_3]$ are bipartite graphs. For $k\in \{2,3\}$, we pick a maximum stable set from each big-component of $G[X_k\sm A_k]$, and let $B_k$ be the union of these sets.
	By \cref{K4Yibipart}, we pick a maximum stable set from each big-component of $G[Y_1]$, and let $D$ be the union of these sets. So $X_2\sm (A_2\cup B_2)$, $X_3\sm (A_3\cup B_3)$ and $Y_1\sm D$ are stable sets.  Also  $A_3\cup B_2$ is stable set (by \cref{K4C5ringcolor}), $B_2\sm W_2$ is anticomplete to $Y_1$  (by \cref{K4X1Y2comY3emp}), and  $B_2\cap W_2$ is anticomplete to $Y_1$ (by \cref{K4F4Wicol}). So  $A_3\cup B_2\cup D\cup \{v_1\}$ is a stable set (by \cref{K4C5ringcolor}). Likewise, $A_2\cup B_3\cup (Y_1\sm D)$ is also a stable set. To proceed further, we let $L':=\{t\in L\mid N(t)\cap D\neq \es\}$, and we claim the following:
	\begin{equation} \label{DL'corel}
\longbox{\it $L'$ is complete to the vertex-set of each big-component of $G[Y_1]$, and is anticomplete to $L\sm L'$. Moreover, $G[L']$ is a bipartite graph.}
	\end{equation}
	\no{\it Proof of $(\ref{DL'corel})$}.~Let $Q$ be a big-component of $G[Y_1]$, and so $Q= K_2$ (by \cref{K4Yibipart}). Suppose to the contrary that there are nonadjacent vertices, say  $t\in L'$ and $y\in V(Q)$. Let $y'\in D$ be such that $y't\in E(G)$. Since $Y_2\neq \es$, we let $y_2\in Y_2$. Then since $\{t, y', v_2,v_1,y_2\}$ does not induce a $P_5$, we have $y_2t\in E(G)$ (by \ref{BiBi+1bigcomancom}), and then $\{t,y_2,v_1,v_2,y\}$ induces a $P_5$, a contradiction; so $L'$ is complete to $V(Q)$.
	Then   $L'$ is anticomplete to $L\sm L'$ (by \ref{AiBi+1homset}).
	Since $L'$ is complete to $D$, there are adjacent vertices, say $p,q\in Y_1$ such that $\{p,q\}$ is complete to $L'$. So  $G[L']$ is $P_3$-free (by \ref{uvadjnbd}), and since  $G$ is $K_5$-free,  $G[L']$ is $K_3$-free. Hence $G[L']$ is a bipartite graph. This proves (\ref{DL'corel}). $\Diamond$

\smallskip	
By (\ref{DL'corel}), there are stable sets, say $L_1'$ and $L_2'$, such that $L'=L_1'\cup L_2'$. By \cref{K4L1diamondfreecol}, there are three stable sets, say $R_1, R_2$ and $R_3$ such that $L\sm L'=R_1\cup R_2\cup R_3$. Now define $S_1:= (X_2\sm (A_2\cup B_2))\cup L_1'\cup R_1$, $S_2:= (X_3\sm (A_3\cup B_3))\cup L_2'\cup R_2$, $S_3:= A_3\cup B_2\cup D\cup R_3\cup \{v_1\}$ and $S_4:=A_2\cup B_3\cup (Y_1\sm D)$. Then by above arguments and by (\ref{DL'corel}), we see that $S_1,S_2,S_3$ and $S_4$ are stable sets. So $\chi(G[X_2\cup X_3\cup M\cup L\cup \{v_1\}])\leq 4$. This proves \cref{XiXi-1yi4col}.
\end{proof}

\begin{lemma} \label{YiL-4col}
	If there is an $i\in \{1,2,3\}$ such that $Y_{i+1}\cup Y_{i-1} \neq \es$, then $\chi(G[Y_i\cup L\cup \{v_{i}\}])\leq 4$.
	\end{lemma}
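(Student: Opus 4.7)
The plan is to imitate the coloring strategy in the proof of \cref{XiXi-1yi4col}, in the simpler setting here where $X$ plays no role. By symmetry we may assume $i=1$ and, swapping $Y_2$ with $Y_3$ if needed, that $Y_2\neq \es$; fix $y_2\in Y_2$. Since $v_1$ has no neighbor in $Y_1\cup L$, it suffices to partition $Y_1\cup L$ into four stable sets and then place $v_1$ in any one of them. By \cref{K4Yibipart}, $G[Y_1]$ is a disjoint union of $K_2$'s (the big-components) and $K_1$'s; let $D$ consist of one vertex from each big-component, so that both $D$ and $Y_1\setminus D$ are stable, and define $L' := \{t\in L : N(t)\cap D \neq \es\}$.

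The heart of the proof is the following structural assertion $(\star)$, which mirrors $(\ref{DL'corel})$ in the proof of \cref{XiXi-1yi4col}: $L'$ is complete to the vertex-set of every big-component of $G[Y_1]$; $L\setminus L'$ is anticomplete to the vertex-set of every big-component of $G[Y_1]$; $L'$ is anticomplete to $L\setminus L'$; and $G[L']$ is bipartite. The first statement is proved by the same $P_5$-argument as in the paper: if $t\in L'$ had a non-neighbor $y$ in some big-component $\{y,y'\}$, then picking $\tilde y\in D$ with $t\tilde y\in E(G)$, the set $\{t,\tilde y,v_2,v_1,y_2\}$ would induce a $P_5$ unless $ty_2\in E(G)$, in which case $\{t,y_2,v_1,v_2,y\}$ would induce a $P_5$; the non-edges $\tilde y y_2$ and $yy_2$ come from \ref{BiBi+1bigcomancom}, and the symmetric argument using $y_3\in Y_3$ handles the case $Y_2=\es$. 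The same $P_5$-argument (taken with $\tilde y=y$) yields the dichotomy that every $t\in L$ is either complete or anticomplete to the vertex-set of each big-component, and the second statement of $(\star)$ follows: if $t\in L\setminus L'$ had a neighbor in some big-component $V(Q)$, the dichotomy would force $t$ complete to $V(Q)$ and hence adjacent to the unique vertex of $V(Q)\cap D$, contradicting $t\notin L'$. The third statement follows from \ref{AiBi+1homset} applied with $i=2$: the component of $G[X_2\cup L]$ containing $t\in L'$ is homogeneous in $G[X_2\cup X_3\cup Y_1\cup L]$, so an edge from $t$ to some $t'\in L\setminus L'$ would force $t'$ to be adjacent to the $D$-neighbor of $t$, a contradiction. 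The fourth statement holds because $L'$ is complete to some $K_2\subseteq Y_1$, so $G[L']$ is $P_3$-free by \ref{uvadjnbd} and $K_3$-free by the $K_5$-freeness of $G$; hence $G[L']$ is bipartite.

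Once $(\star)$ is in hand, the coloring is immediate. Let $L'=L_1'\cup L_2'$ be a $2$-coloring of $G[L']$, and let $R_1,R_2,R_3$ be stable sets partitioning $L\setminus L'$, which exist because $\chi(G[L\setminus L'])\leq \chi(G[L])\leq 3$ by \cref{K4L1diamondfreecol}. Define
\[
S_1 := L_1'\cup R_1,\ \ S_2 := L_2'\cup R_2,\ \ S_3 := D\cup R_3\cup\{v_1\},\ \ S_4 := Y_1\setminus D.
\]
Each $S_j$ is a stable set: $S_1$ and $S_2$ because $L'$ is anticomplete to $L\setminus L'$; $S_3$ because $D$ is stable, $R_3\subseteq L\setminus L'$ is anticomplete to $D$ by $(\star)$, and $v_1$ has no neighbor in $Y_1\cup L$; and $S_4$ by the choice of $D$. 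Since $S_1\cup S_2\cup S_3\cup S_4 = Y_1\cup L\cup\{v_1\}$, we obtain $\chi(G[Y_1\cup L\cup\{v_1\}])\leq 4$. The only delicate step is the structural assertion $(\star)$; once established, the coloring itself is a direct replay of the argument in \cref{XiXi-1yi4col}.
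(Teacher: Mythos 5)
Your proof is correct and is precisely the adaptation of the argument for \cref{XiXi-1yi4col} (in particular of its displayed structural claim about $L'$) that the paper intends when it declares the proof ``similar'' and omits the details: the same set $D$, the same $L'$, the same two $P_5$'s through $y_2$, and the same four colour classes with the $X$-parts deleted. The only remark worth making is that part (2) of your $(\star)$ is superfluous for the colouring, since the anticompleteness of $R_3\subseteq L\setminus L'$ to $D$ already follows from the definition of $L'$.
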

\begin{proof}
The proof is similar to the proof of \cref{XiXi-1yi4col}, and we omit the details.
\end{proof}

\begin{lemma}\label{K4-F4}
	If $G$ contains an $F_4$, then $\chi(G)\leq 7$.
\end{lemma}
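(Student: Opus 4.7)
The plan is to fix a copy of $F_4$ inside $G$, use its triangle as the reference $K_3$ $C=\{v_1,v_2,v_3\}$, and then set up the decomposition $V(G)=C\cup X\cup Y\cup Z\cup L$ exactly as in Section~\ref{genprop}, inheriting the properties \ref{Zclq}--\ref{Atcorel} and the lemmas of the current section. By \cref{K4-z} we have $Z=\{z^*\}$, and by \ref{BZancom}, \ref{BiBi+1bigcomancom} together with \cref{K4Yibipart} we already have $\chi(G[Y\cup Z])\leq 3$ (since each $G[Y_i]$ is a union of $K_2$'s and $K_1$'s, big components of $Y_i$ are anticomplete to $Y_{i+1}\cup Y_{i-1}$, and $Z$ is anticomplete to $Y$). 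So the whole game becomes: color $C\cup X\cup L$ with $4$ colors, combine with a $3$-coloring of $Y\cup Z$ that matches the colors chosen on~$C$.

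Next, I would use the $F_4$-pattern (triangle plus the additional vertices with the dotted/solid edges) to locate, up to symmetry, an index $i\in\{1,2,3\}$ such that the $F_4$ forces either (a)~$X_i\neq\emptyset$, $X_{i+1}\neq\emptyset$ and $L\setminus L_1\neq\emptyset$, or (b)~$Y_{i+1}\cup Y_{i-1}\neq\emptyset$ and every vertex of $L$ that has a neighbor in $X_{i+1}\cup X_{i-1}$ sits in a subset we can handle. In case (a) we are done immediately by \cref{K4XLcomcol}. In case (b) I would apply \cref{XiXi-1yi4col} (with $M=Y_i$ if $Y_{i+1},Y_{i-1}\neq\emptyset$, else $M=\emptyset$) to obtain a proper $4$-coloring of $X_{i+1}\cup X_{i-1}\cup M\cup L\cup\{v_i\}$; note the hypothesis ``$G[X_{i+1}]$ and $G[X_{i-1}]$ are $K_3$-free'' needed by that lemma will follow from \cref{K4XiK3free} once we check that the components of $X_{i+1}$ and $X_{i-1}$ that meet $L$ are already handled by being placed in $L\setminus L_1$ (which routes them into case~(a)).

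To close the proof, I would color the remaining set $\{v_{i+1},v_{i-1}\}\cup X_i\cup (Y\setminus M)\cup Z$ using three \emph{new} colors, say $\{5,6,7\}$. The key small-case bookkeeping: $\{v_{i+1},v_{i-1},z^*\}$ is a clique, so it forces all three colors; $X_i$ is $K_3$-free (by \cref{K4XiK3free} applied to components not meeting $L$, together with \cref{K4XZhomset} and \ref{uvadjnbd} for components hitting $z^*$), so a maximum stable set carved out via \cref{P5K3prop}:$(a)$ can be merged with the color class of $v_{i+1}$ or $v_{i-1}$; and the remaining $Y_j$'s, being unions of $K_2$'s and $K_1$'s anticomplete across big components (\ref{BiBi+1bigcomancom}), can be absorbed into the three classes using the complete-to-$\{v_{j+1},v_{j-1}\}$ constraints from the definition of $Y_j$. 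This yields the $3$-coloring of the leftover piece compatible with the $4$-coloring, hence $\chi(G)\leq 7$.

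The main obstacle I anticipate is the last step: verifying that the $4$ colors produced by \cref{XiXi-1yi4col} on $\{v_i\}\cup X_{i+1}\cup X_{i-1}\cup M\cup L$ can be extended consistently using only three more colors on $\{v_{i+1},v_{i-1}\}\cup X_i\cup (Y\setminus M)\cup Z$. The delicate point is that $X_i$ may contain both big components interacting with $Y_{i+1}\cup Y_{i-1}$ and $5$-ring components (subject to \cref{K4C5ringcolor}); we will need to pick the maximum stable sets inside $5$-rings carefully so they merge with $\{v_{i+1}\}$'s class without colliding with the stable-set representatives coming from big components of $Y_j$. All other constraints (the singleton $Z=\{z^*\}$, the anticompleteness $Y\perp Z$, and the homogeneity statements \ref{K4XiYihomset}, \ref{K4XZhomset}) are well-adapted to this merge, so the proof should go through after a short explicit assignment.
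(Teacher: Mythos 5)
Your overall architecture is the same as the paper's: for a suitable $i$ you $4$-colour $\{v_i\}\cup X_{i+1}\cup X_{i-1}\cup Y_i\cup L$ via \cref{XiXi-1yi4col} (or dispose of the case via \cref{K4XLcomcol}), and spend three further colours on the complementary set $\{v_{i+1},v_{i-1}\}\cup X_i\cup (Y\sm Y_i)\cup Z$. The genuine gap is in the second half, precisely the step you flag as the ``main obstacle.'' That complementary set is exactly $N(v_i)$, and the paper handles it in one line: since $G$ is $(K_5-e)$-free, $G[N(v_i)]$ is $(K_4-e)$-free, and since $G$ is $K_5$-free it has clique number at most $3$, so Theorem~\ref{P5diamond-col} gives $\chi(G[N(v_i)])\leq 3$. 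You miss this observation and instead sketch an explicit $3$-colouring, which is not carried out and does not obviously go through: \cref{K4XiK3free} yields $K_3$-freeness of a component of $G[X_i]$ only when that component is anticomplete to $L$, and \cref{K4XZhomset} only covers components meeting $N(z^*)$, so components of $X_i$ that meet $L$ but miss $z^*$ are not covered by anything you cite; moreover the proposed merging of $5$-ring stable sets, the $K_2$-components of $Y_{i+1}\cup Y_{i-1}$ (which are complete to $\{v_i,v_{i-1}\}$ resp.\ $\{v_i,v_{i+1}\}$ and may have neighbours in $X_i$), and $z^*$ into only three classes is left entirely unverified.

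There is a second, smaller gap in the case analysis. The clean division is: if $L=L_1$, every component of every $X_j$ is anticomplete to $L$, hence $K_3$-free by \cref{K4XiK3free} (using $Y_2,Y_3\neq\es$), and \cref{XiXi-1yi4col} applies; if $L\sm L_1\neq\es$ and two of the $X_j$ are nonempty, \cref{K4XLcomcol} finishes; otherwise some $k$ has $X_{k+1}=X_{k-1}=\es$ and one must $4$-colour $Y_k\cup L\cup\{v_k\}$, which is exactly \cref{YiL-4col} --- a lemma you never invoke. \cref{XiXi-1yi4col} does not substitute for it, because its set $M$ equals $Y_k$ only when \emph{both} $Y_{k+1}$ and $Y_{k-1}$ are nonempty, which can fail (e.g.\ $k=2$ with $Y_1=\es$), leaving $Y_k$ uncoloured. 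Finally, your opening reduction (``$3$-colour $Y\cup Z$ and $4$-colour $C\cup X\cup L$'') is not the partition you actually use and the claim $\chi(G[C\cup X\cup L])\leq 4$ is never established; it should be dropped.
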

\begin{proof}
	Suppose that $G$ contains an $F_4$ with vertices and edges as shown in Figure~\ref{fig-F123}. Let $C := \{v_1,v_2,v_3\}$. Then,  with respect to $C$, we define the sets $X,Y,Z$ and $L$ as in Section~\ref{genprop}, and we use the properties in    Section~\ref{genprop}. We also use \Cref{K4-YL,K4Xcompprop,XiXi-1yi4col,YiL-4col}. Clearly  $y_2\in Y_2$ and $y_3\in Y_3$ so that $Y_2$ and $Y_3$ are nonempty. For each $i$, since $G[N(v_i)]$ is ($K_4-e$)-free, by Theorem~\ref{P5diamond-col}, we have $\chi(G[N(v_i)])\leq 3$. Recall that $V(G)\sm N(v_i) = \{v_i\}\cup X_{i+1}\cup X_{i+2}\cup Y_i\cup L$. Now if $L=L_1$, then  $\chi(G)\leq 7$ (by \Cref{XiXi-1yi4col}) and we are done. So we may assume that $L\sm L_1\neq \es$. Also using \cref{K4XLcomcol}, we may assume that there is an index $k\in \{1,2,3\}$ such that $X_{k+1}$ and $X_{k-1}$ are empty. Then from \cref{YiL-4col}, $\chi(G[Y_k\cup L\cup \{v_{k}\}])\leq 4$, and so $\chi(G)\leq \chi(G[N(v_k)])+ \chi(G[V(G)\sm N(v_k)]) \leq 7$. This proves \cref{K4-F4}.
\end{proof}

\begin{lemma}\label{K4-F5}
	If $G$ is $F_4$-free, and contains an $F_5$, then  $\chi(G)\leq 7$.
\end{lemma}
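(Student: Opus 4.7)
The plan is to mirror closely the proof of \cref{K4-F4}. First I would fix a triangle $\{v_1,v_2,v_3\}$ inside the given induced $F_5$ (with vertices and edges as in Figure~\ref{fig-F123}) and, with respect to $C:=\{v_1,v_2,v_3\}$, set up the partition $C, X, Y, Z, L$ together with the auxiliary sets $W_i$ and $L_1$ as in Section~\ref{genprop} and in the paragraph preceding \cref{K4Xcompprop}. The explicit labelling of $F_5$ in the figure should force (up to symmetry) two of the $Y_i$'s to be non-empty — this is the analogue of $y_2\in Y_2$ and $y_3\in Y_3$ used for $F_4$ — so that for every $i\in\{1,2,3\}$ the condition $Y_{i+1}\cup Y_{i-1}\neq \es$ holds.

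Next, because $G$ is $K_5$-free with $\omega(G)=4$, each open neighbourhood $G[N(v_i)]$ satisfies $\omega(G[N(v_i)])\leq 3$ and is $(P_5,K_4-e)$-free (using also that $G$ is $(P_5,K_5-e)$-free), hence by Theorem~\ref{P5diamond-col} we have $\chi(G[N(v_i)])\leq 3$. Since $V(G)\sm N(v_i)=\{v_i\}\cup X_{i+1}\cup X_{i+2}\cup Y_i\cup L$, it suffices to colour the graph induced on this complement with $4$ colours; combining with $3$ fresh colours on $N(v_i)$ then yields $\chi(G)\leq 7$.

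To colour $G[V(G)\sm N(v_i)]$ with four colours I would split into the same two cases as in the $F_4$ proof. If $L=L_1$, i.e.\ $L$ is anticomplete to $X$, then by \cref{K4XiK3free} (using that $Y_{i+1}\cup Y_{i-1}\neq \es$) the graphs $G[X_{i+1}]$ and $G[X_{i-1}]$ are $K_3$-free, so \cref{XiXi-1yi4col} applies with $M=Y_i$ and produces the desired $4$-colouring. Otherwise $L\sm L_1\neq \es$; then \cref{K4XLcomcol} immediately gives $\chi(G)\leq 7$ whenever two of $X_1,X_2,X_3$ are non-empty. In the only remaining sub-case there is an index $k$ with $X_{k+1}=X_{k-1}=\es$, and the guaranteed non-emptiness of $Y_{k+1}\cup Y_{k-1}$ lets me invoke \cref{YiL-4col} to obtain $\chi(G[Y_k\cup L\cup\{v_k\}])\leq 4$; combined with $\chi(G[N(v_k)])\leq 3$ this again yields $\chi(G)\leq 7$.

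The main obstacle I expect is the very first step: verifying that, no matter how the dotted (possibly-present) edges in the $F_5$ picture are resolved, the seven labelled vertices of the $F_5$ truly land in the cells of the partition forced by the argument — in particular, that the two prescribed vertices land in distinct $Y_i$'s rather than drifting into $X$, $Z$ or $L$ in some resolution of the dotted edges. This is a small case analysis on the edges of $F_5$, but everything downstream simply reuses the bookkeeping already done in \cref{K4Xcompprop,K4-YL,XiXi-1yi4col,YiL-4col}, so once that hinge is checked the proof should fall out cleanly.
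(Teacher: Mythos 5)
There is a genuine gap, and it sits exactly at the hinge you flagged. The labelling of $F_5$ does \emph{not} place two vertices into distinct $Y_i$'s: it gives only $x_1\in X_1$ and $y_2\in Y_2$, and — worse for your plan — $F_4$-freeness forces $Y_1\cup Y_3=\es$ (for any $y\in Y_1\cup Y_3$, the set $\{v_1,v_2,v_3,y_2,y,z^*\}$ would induce an $F_4$). Hence the hypothesis $Y_{i+1}\cup Y_{i-1}\neq\es$ of \cref{K4XiK3free} holds for $i=1$ and $i=3$ but fails for $i=2$, so you cannot conclude that $G[X_2]$ is $K_3$-free. This is fatal to a verbatim transplant of the $F_4$ argument: one has $X_1\neq\es$ (from $x_1$) and $X_2\neq\es$ (otherwise $\{v_1,v_3,z^*\}$ would be a clique cut-set isolating $v_2$, since $Y_1\cup Y_3=\es$ and $Z=\{z^*\}$), so the sub-case ``some $k$ has $X_{k+1}=X_{k-1}=\es$'' never occurs; and for every choice of $i$ the complement $\{v_i\}\cup X_{i+1}\cup X_{i+2}\cup Y_i\cup L$ either contains $X_2$, whose $K_3$-freeness you have not established, or (for $i=2$) contains $Y_2$, which \cref{XiXi-1yi4col} does not cover because $M=\es$ when $Y_1$ and $Y_3$ are empty.

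The paper closes this gap with an extra claim and a dichotomy that your proposal lacks. In the case $L=L_1$ it first proves that if $X_1\cup X_3$ is not anticomplete to $Y_2$, then $G[X_2]$ is $K_3$-free (a short $P_5$/$K_5-e$ argument routed through a vertex of $Y_2$ and a triangle of $X_2$); then \cref{XiXi-1yi4col} applies to $X_1\cup X_2\cup L\cup\{v_3\}$ and the $N(v_3)$-scheme finishes since $Y_3=\es$. In the complementary case $X_1\cup X_3$ is anticomplete to $Y_2$ (and, by $F_4$-freeness, to $z^*$), and the proof switches to a different partition: $X_1\cup X_3\cup Y_2\cup\{v_2,z^*\}$ is $4$-coloured by combining \cref{XiXi-1yi4col} (with $i=2$, $M=\es$) with the $2$-colouring of $G[Y_2\cup\{v_2\}]$ from \cref{K4Yibipart}, while $X_2\cup\{v_1,v_3\}\cup L$ gets $3$ colours via \cref{P5diamond-col} and \cref{K4L1diamondfreecol}. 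You would need to supply both of these steps — or some substitute for the $K_3$-freeness of $G[X_2]$ — for your proof to go through; the part of your outline handling $L\sm L_1\neq\es$ via \cref{K4XLcomcol} is fine as stated.
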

\begin{proof}
	 Suppose that $G$ contains an   $F_5$ with vertices and edges as shown in Figure~\ref{fig-F123}. Let $C := \{v_1,v_2,v_3\}$. Then, with respect to $C$, we define the sets $X,Y,Z$ and $L$ as in Section~\ref{genprop},  and we use the properties in    Section~\ref{genprop}. We also use \Cref{K4-YL,K4Xcompprop,XiXi-1yi4col,YiL-4col}. Clearly  $x_1\in X_1$ and $y_2\in Y_2$ so that $X_1$ and $Y_2$ are nonempty. For each $i$, since $G[N(v_i)]$ is ($K_4-e$)-free, by Theorem~\ref{P5diamond-col}, we have $\chi(G[N(v_i)])\leq 3$.
	Now for any $y\in Y_1\cup Y_3$, since  $\{v_1,v_2,v_3,y_2,y,z^*\}$ does not induce an $F_4$,  $Y_1\cup Y_3=\es$.
	Also for any $x\in X_1\cup X_3$, since  $\{v_1,v_2,v_3,x,y_2,z^*\}$ does not induce an $F_4$,  $X_1\cup X_3$ is anticomplete to $\{z^*\}$.
	Since $\{v_1,v_3,z^*\}$ is not a clique cut-set separating $\{v_2\}$ and the rest of the vertices, we have $X_2\neq \es$. Now if $L\sm L_1\neq \es$, then  $\chi(G)\leq 7$ (by \Cref{K4XLcomcol}) and we are done. So we may assume that $L=L_1$. Then
	$G[X_1]$ and $G[X_3]$ are $K_3$-free (by \cref{K4XiK3free}). Next we claim the following:
	\begin{equation} \label{F5X2K3free} \longbox{\it If $X_1$ is not anticomplete to $Y_2$, then   $G[X_2]$ is $K_3$-free. Likewise, if $X_3$ is not anticomplete to $Y_2$, then $G[X_2]$ is $K_3$-free.}
	\end{equation}
	\no{\it Proof of $(\ref{F5X2K3free})$}.~Suppose to the contrary that there is a component, say $Q$,  that contains a $K_3$ induced by the vertices, say $\{p_1,p_2,p_3\}$. Since $G$ is $K_5$-free, we may assume that $p_1z^*\notin E(G)$. By our assumption, there are vertices, say $x\in X_1$ and $y\in Y_2$. Then since one of  $\{x,y,v_3,v_2,p_1\}$ or $\{x,y,p_1,v_2,z^*\}$ does not induce a $P_5$, we have $p_1x\in E(G)$. Then  $\{p_1,p_2,p_3,x,v_2\}$ induces a $K_5-e$ (by \ref{AiBi+1homset}), a contradiction. So (\ref{F5X2K3free}) holds. $\Diamond$
	
\smallskip
	 Now if $X_1\cup X_3$ is not anticomplete to $Y_2$, then from (\ref{F5X2K3free}), $G[X_2]$ is $K_3$-free, and so from \cref{XiXi-1yi4col}, we have $\chi(G[X_1\cup X_2\cup L\cup \{v_3\}])\leq 4$, and hence $\chi(G)\leq \chi(G[N(v_3)])+ \chi(G[X_1\cup X_2\cup L\cup \{v_3\}])\leq 7$. So we may assume that $X_1\cup X_3$ is anticomplete to $Y_2$.  Then since $\chi(G[Y_2\cup \{v_2\}])\leq 2$ (by \cref{K4Yibipart}), from \cref{XiXi-1yi4col}, it follows that $\chi(G[X_1\cup Y_2\cup X_3\cup \{v_2,z^*\}])\leq 4$. Since $N(v_2)\sm \{z^*\}=X_2\cup \{v_1,v_3\}$ (by \cref{K4-z}), we see that $N(v_2)\sm \{z^*\}$ is anticomplete to $L$. So from \cref{K4L1diamondfreecol}, it follows that $\chi(G[(N(v_2)\sm \{z^*\})\cup L])\leq 3$. Hence $\chi(G)\leq 7$. This proves \cref{K4-F5}.
\end{proof}

\begin{lemma}\label{K4-HVN}
If $G$ contains an HVN, then $\chi(G)\leq 7$.
\end{lemma}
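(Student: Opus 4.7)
The plan follows the template of \cref{K4-F4} and \cref{K4-F5}. Given an HVN in $G$ and $\omega(G)=4$, I fix its underlying $K_4$ as $\{v_1,v_2,v_3,z^*\}$ with $z^*\in Z$, and take $C=\{v_1,v_2,v_3\}$ as the base triangle. By relabelling $C$ so that both neighbors of the HVN pendant lie in $C$, I may assume the pendant belongs to $Y_1$; in particular $Y_1\neq\emptyset$, and by \cref{K4-z} we have $Z=\{z^*\}$. For each $i\in\{1,2,3\}$, $G[N(v_i)]$ is $(K_4-e)$-free, so $\chi(G[N(v_i)])\leq 3$ by \cref{P5diamond-col}. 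Since $V(G)\setminus N(v_i)=\{v_i\}\cup X_{i+1}\cup X_{i-1}\cup Y_i\cup L$, it suffices to show $\chi(G[V(G)\setminus N(v_i)])\leq 4$ for some $i$.

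First I dispose of the case $L\setminus L_1\neq\emptyset$. If some consecutive pair $X_j,X_{j+1}$ is simultaneously nonempty, then \cref{K4XLcomcol} already yields $\chi(G)\leq 7$. Otherwise at most one $X_j$ is nonempty, and this restrictive configuration can be handled directly by combining \cref{K4L1diamondfreecol} with the limited $Y$-structure to obtain a $4$-coloring of $V(G)\setminus N(v_i)$ for a suitable $i$.

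The main case is $L=L_1$. Since $Y_1\neq\emptyset$, \cref{K4XiK3free} yields that $G[X_2]$ and $G[X_3]$ are both $K_3$-free, and I apply \cref{XiXi-1yi4col} at a strategic index $i$. If $Y_1,Y_2,Y_3$ are all nonempty, use $i=1$, so $M=Y_1$ and the lemma $4$-colors $V(G)\setminus N(v_1)$ entirely. If exactly one of $Y_2,Y_3$ is empty, say $Y_j=\emptyset$ for some $j\in\{2,3\}$, apply with $i=j$: then $M=Y_j=\emptyset$ and $V(G)\setminus N(v_j)=\{v_j\}\cup X_{j+1}\cup X_{j-1}\cup L$; the $K_3$-freeness of $G[X_1]$ (one of the two $X$'s appearing in the lemma) again follows from \cref{K4XiK3free} because the nonempty $Y$ other than $Y_1$ lies in $\{Y_2,Y_3\}$.

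The remaining sub-case $L=L_1$, $Y_2=Y_3=\emptyset$, $Y_1\neq\emptyset$ is the main obstacle. Here \cref{XiXi-1yi4col} with $i=1$ only $4$-colors $\{v_1\}\cup X_2\cup X_3\cup L$ and omits $Y_1$, while applying it with $i\in\{2,3\}$ is blocked because \cref{K4XiK3free} cannot be invoked to guarantee that $G[X_1]$ is $K_3$-free when $Y_2=Y_3=\emptyset$. To finish, I would derive extra structure exploiting the HVN vertex $y^*\in Y_1$ together with $P_5$-freeness and $F_1$-freeness: either force $G[X_1]$ to be $K_3$-free (making \cref{XiXi-1yi4col} at $i\in\{2,3\}$ applicable), or show that $Y_1$ is anticomplete to $L$ or to $X_2\cup X_3$ so that it can be absorbed into the existing color classes, or exhibit a clique cut-set contradicting the standing hypothesis. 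This sub-case is the step I expect to require the most careful work; the other cases amount to routine applications of the previously established structural and coloring lemmas.
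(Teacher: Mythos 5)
There is a genuine gap, and it sits exactly where you flag it: the subcase $L=L_1$, $Y_2=Y_3=\emptyset$, $Y_1\neq\emptyset$ is not an exceptional corner but is essentially the \emph{whole} problem, and your proposal only offers a wish list of strategies for it rather than an argument. The key idea you are missing is that, by \cref{K4-F4} and \cref{K4-F5}, one may assume $G$ is $(F_4,F_5)$-free; then the HVN vertex $y_1\in Y_1$ together with $z^*$ forces $Y_2\cup Y_3=\emptyset$ (else $\{v_1,v_2,v_3,z^*,y_1,y\}$ induces an $F_4$) \emph{and} $X_2\cup X_3=\emptyset$ (else an $F_4$ or $F_5$ appears). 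So all of your cases with $Y_2$, $Y_3$, or $X_2\cup X_3$ nonempty are vacuous, and the graph collapses to $C\cup X_1\cup Y_1\cup\{z^*\}\cup L$. The remaining task is to show $\chi(G[X_1\cup Y_1\cup\{z^*\}])\leq 4$, which combined with $\chi(G[C\cup L])\leq 3$ (from \cref{K4-z} and \cref{K4L1diamondfreecol}) gives $7$.

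None of the three escape routes you list for the hard subcase is what actually closes it. The paper splits on whether $X_1$ meets $N(z^*)$: if some $x\in X_1$ has $xz^*\in E(G)$, then $F_5$-freeness forces $X_1$ to be complete to $\{z^*\}$, so $G[X_1]$ is $(P_3,K_3)$-free and a $4$-coloring of $X_1\cup Y_1\cup\{z^*\}$ follows from \cref{K4Yibipart}; otherwise $X_1$ is anticomplete to $\{z^*\}$, and one picks a maximum stable set $D$ from each big component of $G[Y_1]$, sets $X_1':=\{x\in X_1\mid N(x)\cap D\neq\emptyset\}$, uses \cref{K4XiYihomset} to show each component of $G[X_1']$ is complete to an edge of $Y_1$ (hence $(P_3,K_3)$-free), colors $X_1'\cup D$ with $3$ colors, colors $X_1\setminus X_1'$ with $3$ colors via $(K_4-e)$-freeness and \cref{P5diamond-col}, and reuses colors across the two parts because $X_1\setminus X_1'$ is anticomplete to $X_1'\cup D$. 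In particular $G[X_1]$ need \emph{not} be $K_3$-free in the second branch, so your hope of making \cref{XiXi-1yi4col} applicable at $i\in\{2,3\}$ would fail. Your secondary case ($L\setminus L_1\neq\emptyset$ with at most one nonempty $X_j$, handled ``directly'') is likewise unsubstantiated, though after the $F_4/F_5$ reduction it too disappears. As written, the proposal does not constitute a proof.
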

\begin{proof}
We may assume that $G$ contains an HVN, say $K$, with vertex-set  $\{v_1,v_2,v_3,z^*, y_1\}$ such that $\{v_1,v_2,v_3,z^*\}$ induces a $K_4$ and $N_K(y_1)=\{v_2,v_3\}$.   Let $C := \{v_1,v_2,v_3\}$. Then, with respect to $C$, we define the sets $X,Y,Z$ and $L$ as in Section~\ref{genprop}, and we use the properties in    Section~\ref{genprop}. We also use \Cref{K4-YL,K4Xcompprop,XiXi-1yi4col,YiL-4col}.    Clearly   $y_1\in Y_1$ so that $Y_1$ is nonempty.  We may assume that, from \cref{K4-F4} and \cref{K4-F5}, $G$ is ($F_4, F_5$)-free.  Now for any $y\in Y_2\cup Y_3$, since $\{v_1,v_2,v_3,z^*,y_1,y\}$ does not induce an $F_4$, we have $Y_2\cup Y_3=\es$.
	Also for any $x\in X_2\cup X_3$, since  $\{v_1,v_2,v_3,z^*,y_1,x\}$ does not induce an $F_4$ or an $F_5$, we have $X_2\cup X_3=\es$.
	Further since $\{v_2,v_3,z^*\}$ is not a clique cut-set separating $\{v_1\}$ and the rest of the vertices, we have $X_1\neq \es$.
	Next we claim that:
	\begin{equation} \label{H1casecol}
		  \mbox{$\chi(G[X_1\cup Y_1\cup \{z^*\}])\leq 4$.}
	\end{equation}
	\no{\it Proof of $(\ref{H1casecol})$}.~First suppose that there is a vertex, say $x\in X_1$ such that $xz^*\in E(G)$. Then  for any $x'\in X_1\sm \{x\}$, since $\{v_1,v_2,v_3,x,z^*,x'\}$ does not induce an $F_5$, we see that $X_1$ is complete to $\{z^*\}$. Since $X_1$ is complete to $\{v_1,z^*\}$ and since $G$ is  ($K_5, K_5-e$)-free, $G[X_1]$ is $(P_3,K_3)$-free; so $\chi(G[X_1])\leq 2$. Then from \cref{K4Yibipart}, $\chi(G[X_1\cup Y_1\cup \{z^*\}])\leq 4$ (by \ref{BZancom}), and we are done. So we may assume that $X_1$ is anticomplete to $\{z^*\}$. By \cref{K4Yibipart}, we pick a maximum stable set from each big-component of $G[Y_2]$, and let $D$ be the union of these sets. To proceed further, we let $X_1':=\{x\in X_1| N(x)\cap D\neq \es\}$, and consider a component of $G[X_1']$, say $Q$. By \cref{K4XiYihomset}, there are adjacent vertices, say  $a,b\in Y_1$  such that $\{a,b\}$ is complete to $V(Q)$. Then since $G$ is  ($K_5, K_5-e$)-free, $Q$ is $(P_3,K_3)$-free, and so $\chi(Q)\leq 2$. Hence $\chi(G[X_1'\cup D])\leq 3$. Since $G[X_1\sm X_1']$ is complete to $\{v_1\}$, $G[X_1\sm X_1']$ is $(K_4-e)$-free and hence $\chi(G[X_1\sm X_1'])\leq 3$ (by \cref{P5diamond-col}). Since $Y_1\sm D$ is a stable set and since $X_1\sm X_1'$ is anticomplete to $X_1'\cup D$ (by \cref{K4XiYihomset}), we conclude that
	$\chi(G[X_1\cup Y_1\cup \{z^*\}])\leq \chi(G[(X_1'\cup D)\cup (X_1\sm X_1')])+\chi(G[(Y_1\sm D)\cup \{z^*\}]) \leq 3+1= 4$ (by \ref{BZancom}). This proves (\ref{H1casecol}). $\Diamond$
	
\smallskip
	 From (\ref{H1casecol}), and  from \cref{K4-z} and \cref{K4L1diamondfreecol}, we have  $\chi(G)\leq \chi(G[C\cup L])+\chi(G[X_1\cup Y_1\cup \{z^*\}])\leq 3+4=7$. This proves \cref{K4-HVN}.
\end{proof}

Now we prove the main theorem of this section, and is given below.

 \begin{theorem}\label{mainthm-w4}
Let $G$ be a ($P_5,K_5-e,F_1$)-free graphs with $\omega(G)= 4$.  Then either $G$ is   the complement of a bipartite graph or $G$ has a clique cut-set or   $\chi(G)\leq 7$.
\end{theorem}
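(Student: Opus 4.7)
The plan is to reduce this theorem immediately to two previously established results: \cref{HVNfreeper} and \cref{K4-HVN}. The argument should be short because all the structural heavy lifting has been done in the preceding lemmas.

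First, I would assume toward the desired conclusion that $G$ is connected, has no clique cut-set, and is not the complement of a bipartite graph, and aim to show $\chi(G)\le 7$. The key observation is that the graph $\mathbb{H}_1$ defined in \cref{not-ter} is exactly the HVN, and the hypothesis $\omega(G)=4$ satisfies $\omega(G)\ge 1+3$. Hence \cref{HVNfreeper} applied with $t=1$ forces that if $G$ were HVN-free, then $G$ would be the complement of a bipartite graph or would have a clique cut-set, contradicting our standing assumption. Therefore $G$ must contain an induced HVN.

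Once an induced HVN is guaranteed, \cref{K4-HVN} directly gives $\chi(G)\le 7$, finishing the proof. Note that \cref{K4-HVN} itself already absorbed the two further sub-cases of $G$ containing an $F_4$ or an $F_5$ by invoking \cref{K4-F4,K4-F5} inside its proof, so no further case analysis is required here.

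Since all the work is packaged into the earlier lemmas, there is essentially no obstacle at this stage. The only thing one must verify cleanly is the matching of parameters: $\mathbb{H}_1=\text{HVN}$ and $\omega(G)=4=t+3$ with $t=1$, so that \cref{HVNfreeper} is applicable in its sharpest form. The nontrivial content of \cref{mainthm-w4} lies entirely in the proofs of \cref{K4-YL,K4Xcompprop,XiXi-1yi4col,YiL-4col,K4-F4,K4-F5,K4-HVN}, which between them handle the colouring bound $7$ whenever $G$ contains one of the small obstructions.
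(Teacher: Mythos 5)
Your proposal is correct and follows exactly the paper's own argument: assume $G$ is connected with no clique cut-set and not the complement of a bipartite graph, invoke \cref{HVNfreeper} with $t=1$ (noting $\mathbb{H}_1=$ HVN and $\omega(G)=4=t+3$) to force an induced HVN, and then conclude $\chi(G)\leq 7$ from \cref{K4-HVN}. No differences worth noting.
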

\begin{proof}Let $G$ be a connected $(P_5,K_5-e, F_1)$-free graph with $\omega(G)=4$.  We may assume that $G$ has no clique cut-set and that $G$ is not the complement of a bipartite graph. By \cref{HVNfreeper}, we may assume that $G$ contains an HVN. Now the theorem follows from \cref{K4-HVN}. \end{proof}

\section{Proof of \cref{main-thm}}\label{mainthm-proof}
Let $G$ be a connected $(P_5, K_5-e)$-free graph with $\omega(G)\geq 4$. If $G$ contains one of   $F_1$, $F_2$ or $F_3$, then the theorem follows from \cref{cont-HG,cont-H2,caseH3}. So we may assume that
$G$ is ($F_1, F_2, F_3$)-free. Now if $\omega(G)\geq 5$, then the theorem follows from \cref{mainthm-w5}, and if $\omega(G) =4$, then the theorem follows from \cref{mainthm-w4}. This completes the proof of \cref{main-thm}. \hfill$\Box$

{\small

}


\begin{thebibliography}{99}

\bibitem{Bondy-Murty} J. A. Bondy and U. S. R. Murty, Graph Theory, Springer, 2013.

\bibitem{BRSV} C.~Brause, B. Randerath, I. Schiermeyer and E. Vumar, On the chromatic number
of $2K_2$-free graphs. Discrete Applied Mathematics  253 (2019) 14--24.

\bibitem{AK1} A. Char and T. Karthick, Coloring of ($P_5$, $4$-wheel)-free graphs. Discrete Mathematics  345 (2022) Article no:112795.

\bibitem{AK2} A. Char and T. Karthick, Improved bounds on the chromatic number of ($P_5$, flag)-free graphs.
Discrete Mathematics  346 (2023) Article no:113501.

\bibitem{CKS}S. A. Choudum, T. Karthick and M. A. Shalu, Perfect coloring and linearly $\chi$-bound
$P_6$-free graphs. Journal of Graph Theory 54:4 (2007)  293--306.

\bibitem{SPGT}M. Chudnovsky, P. Seymour, N. Robertson and R. Thomas,  The strong perfect graph
theorem. Annals of Mathematics 164:1 (2006) 51--229.

\bibitem{CKMM} M. Chudnovsky, T. Karthick, P. Maceli and F. Maffray, Coloring graphs with no induced five-vertex path
or gem. Journal of Graph Theory   95 (2020) 527–542.

\bibitem{Chud-Siva} M. Chudnovsky and V. Sivaraman, Perfect divisibility and 2-divisibility. Journal of Graph Theory 90
(2019) 54–60.

\bibitem{ELMM}L. Esperet, L. Lemoine, F. Maffray, and M. Morel, The chromatic number of \{$P_5,K_4$\}-free graphs. Discrete
Mathematics  313 (2013) 743--754.

\bibitem{Fouquet}
J. L. Fouquet, V. Giakoumakis, F. Maire and H. Thuillier,  On graphs
without $P_5$ and $\overline{P_5}$.   Discrete Mathematics
{146} (1995) 33--44.

\bibitem{Geiber} M. Gei{\ss}er, Colourings of $P_5$-free graphs. PhD Thesis, Technische Universit\"at Bergakademie Freiberg (2022).

\bibitem{GHJM}J. Goedgebeur, S. Huang, Y. Ju  and O. Merkel, Colouring graphs with no induced six-vertex
path or diamond. Theoretical Computer Science  941 (2023) 278--299.

\bibitem{HKLSS}C. T. Ho\`ang, M. Kami\'nski, V.V. Lozin, J. Sawada and X. Shu, Deciding $k$-colorability of $P_5$-free graphs in polynomial time. Algorithmica 57 (2010) 74--81.

    \bibitem{HJK}S.~Huang, Y.~Ju and T. Karthick, Coloring ($P_5$, kite)-free graphs.
ArXiv:2204.08631 [math.CO] (2022).

\bibitem{SK} S. Huang and T. Karthick, On graphs with no induced five-vertex path or paraglider. Journal of Graph
Theory 97 (2021) 305–323.



\bibitem{Joos}F. Joos, Independence and matching number in graphs with maximum degree $4$. Discrete Mathematics 323 (2014) 1--6.


\bibitem{Ju-Huang}Y.~Ju and S. Huang, Near optimal colourability on hereditary graph
families. ArXiv:2303.18003v1[math.CO] (2023).


\bibitem{KKTW}D. Kral, J. Kratochvil, Z. Tuza and G. J. Woeginger, Complexity of coloring graphs without forbidden
induced subgraphs. In: WG 2001, Lecture Notes in Computer Science 2204  (2001) 254--262.
%

  \bibitem{Kier84}  H. A. Kierstead, On the chromatic index of multigraphs without large triangles. Journal of
Combinatorial Theory, Series B 36 (1984) 156--160.

%

\bibitem{KierSchm}H. A. Kierstead and J. H. Schmerl, The chromatic number of graphs which
induce neither $K_{1,3}$ nor $K_5-e$. Discrete Mathematics 58 (1986) 253--262.




\bibitem{ML17} D. Malyshev and O. Lobanova, Two complexity results for the vertex coloring
problem. Discrete Applied Mathematics 219 (2017) 158--166.






\bibitem{GJPS}P. A. Golovach, M. Johnson, D. Paulusma and J. Song, A survey on the computational complexity of colouring graphs with forbidden subgraphs. Journal of Graph Theory 84 (2017) 331--363.

\bibitem{Gy87}
A. Gy\'{a}rf\'{a}s, Problems from the world surrounding perfect
graphs.  Zastosowania Matematyki Applicationes Mathematicae {19}
(1987) 413--441.
%

\bibitem{SS-Survey}A. Scott and P. Seymour, A survey of $\chi$-boundedness. Journal of Graph Theory  95 (2020) 473--504.

\bibitem{SS-P5} A. Scott, P. Seymour and S. Spirkl, Polynomial bounds for chromatic number. IV. A near-polynomial bound
for excluding the five-vertex path. ArXiv:2110.00278 (2021).

\bibitem{TP-P5}N. Trotignon and  L. A. Pham, $\chi$-bounds, operations, and chords. Journal of Graph Theory  88 (2018) 312--336.


\bibitem{Xu}Y. Xu, The chromatic number of ($P_5$, $K_5-e$)-free graphs. Manuscipt (2022).
(Available earlier on: arXiv:2210.04682v1[math.CO](2022).)




\end{thebibliography}
\end{document}